\newcommand{\CC}{\mathbb{C}}
\newcommand{\DD}{\mathbb{D}}
\newcommand{\NN}{\mathbb{N}}
\newcommand{\RR}{\mathbb{R}}
\newcommand{\xx}{\mathbbm{x}}
\newcommand{\calL}{\mathcal{L}}
\newcommand{\calS}{\mathcal{S}}
\newcommand{\calF}{\mathcal{F}}
\newcommand{\calM}{\mathcal{M}}
\newcommand{\calB}{\mathcal{B}}
\newcommand{\calD}{\mathcal{D}}
\newcommand{\calH}{\mathcal{H}}
\newcommand{\calT}{\mathcal{T}}
\newcommand{\frakM}{\mathfrak{M}}
\newcommand{\Bor}{\text{Bor}}
\newcommand{\tr}{\operatorname{tr}}
\DeclareMathOperator{\lin}{lin}
\DeclareMathOperator{\Ext}{Ext}
\DeclareMathOperator{\Ran}{Ran}
\DeclareMathOperator{\Ker}{Ker}
\DeclareMathOperator{\Pol}{Pol}
\DeclareMathOperator{\supp}{supp}
\newcommand{\Dom}{\mathrm{D}}
\DeclareMathOperator{\Orb}{Orb}
\newcommand{\Mat}[2]{\operatorname{M}_{#1}(#2)}
\newcommand{\restr}[2]{#1 \!\! \restriction_{#2}}
\newcommand{\ext}{\mathrm{ext}}
\newcommand{\dScalP}[1]{\langle \! \langle #1 \rangle \! \rangle}
\newcommand{\tnorm}[1]{|\!|\!| #1 |\!|\!|}
\newcommand{\bClassAbs}[1]{\textbf{[} #1 \textbf{]}}
\newcommand{\dist}[1]{\mathrm{dist} (#1)}
\DeclareMathOperator{\VCF}{VCF}
\DeclareMathOperator{\VE}{VE}
\DeclareMathOperator{\VNE}{VNE}
\newcommand{\sigmaP}{\sigma_{\mathrm{p}}}
\newcommand{\sigmaAC}{\sigma_{\mathrm{ac}}}
\newcommand{\muAC}{\mu_{\mathrm{ac}}}
\newcommand{\muS}{\mu_{\mathrm{sing}}}
\newcommand{\ud}{{\: \mathrm{d}}}
\newtheorem{theorem}{Theorem}[section]
\newtheorem{proposition}[theorem]{Proposition}
\newtheorem{lemma}[theorem]{Lemma}
\newtheorem{observation}[theorem]{Observation}
\newtheorem{fact}[theorem]{Fact}
\theoremstyle{plain}
\newcounter{thm}
\numberwithin{equation}{section}
\theoremstyle{definition}
\newtheorem{example}[theorem]{Example}
\newtheorem{remark}[theorem]{Remark}
\newtheorem{definition}[theorem]{Definition}
\title[Spectral Theory of s.a. Finitely Cyclic Operators]
{Spectral Theory of Self-adjoint Finitely Cyclic Operators \\ and \\ Introduction to Matrix-measure $L^2$-spaces}
\author{Marcin Moszy\'{n}ski}
\address{
    Marcin Moszy\'{n}ski \\
    Faculty of Mathematics, Informatics and Mechanics \\
    University of Warsaw \\
    ul. Stefana Banacha 2 \\
    02-097 Warsaw, Poland
}
\email{mmoszyns@mimuw.edu.pl}
\newcommand{\bde}{\begin{description}}
\newcommand{\ede}{\end{description}}
\newcommand{\ben}{\begin{enumerate}}
\newcommand{\een}{\end{enumerate}}
\newcommand{\bite}{\begin{itemize}}
\newcommand{\eite}{\end{itemize}}
\newcommand{\beq}{\begin{equation}}
\newcommand{\beql}[1]{\begin{equation}\label{#1}}
\newcommand{\eeq}{\end{equation}}
\newcommand{\strz}{\longrightarrow}
\newcommand{\sublin}{\underset{\lin}{\subset}}
\newcommand{\opcje}[4]{\left\{
    \begin{array}{ll}#1&\mbox{for $#2$}\\#3&\mbox{for $#4$}\end{array}\right.}
\newcommand{\si}{\sigma}
\newcommand{\om}{\omega}
\newcommand{\Mdc}{\Mat{d}{\CC}}
\newcommand{\lesc}{\left\langle} %%%iloczyn skalarny lewy, potem prawy...
\newcommand{\risc}{\right\rangle}
\newcommand{\eqaemu}{\underset{\mu\mathrm{-a.e.}}{=}}
\subjclass[2020]{Primary 47B15; Secondary 46E40}
\begin{document}
\selectlanguage{english}

%\vspace{30ex}

\maketitle

\vspace{17ex}

\begin{abstract}
We study  finitely cyclic self-adjoint  operators in a Hilbert space, i.e.  s.a. operators that posses such a finite subset in the   domain that the orbits of all its elements  with respect to the operator are linearly dense in the space.

One of the main goals here is to obtain  the representation theorem for such   operators in a form analogous to the one  well-known in the  cyclic s.a. operators case.

To do this, we present here a detailed introduction to matrix measures,   to the matrix measure $L^2$ spaces, and to the multiplication by  scalar functions operators in such spaces. This allows us to formulate and prove in all the details the less known  representation result, saying that  the  finitely cyclic s.a.  operator is unitary equivalent to the  multiplication by the identity $\xx$ function on $\RR$ in the appropriate matrix measure $L^2$ space. 

We study also some detailed spectral problems for finitely cyclic s.a.  operators, like the absolute continuity.
\end{abstract}

\newpage

$$
\mbox{\vspace{10ex}}
$$

\begin{center}
{\large Acknowledgments} \\

\vspace{3ex}

\bite
\item 
The author wishes to thank {\bf EIMI --- Leonhard Euler International Mathematical Institute in Saint Petersburg} --- for the invitation and hospitality. 
Some  parts of the ``early version'' of the manuscript of  this work were presented  in EIMI as  the lecture mini-course in the program ``Spectral Theory and Mathematical Physics'' in Jully 2021, during my stay in EIMI as the  ``long-term visitor''. 

My mini-course (see \  https://indico.eimi.ru/event/360/)\\ ``Spectral theory for self-adjoint cyclic operators \& its analog for {\em finitely cyclic} ones with rigorous introduction to matrix measure $L^2$ spaces'' \ \ \ 
was  dedicated  to the memory of my friend {\bf Sergei Naboko}, died in December 2020.
\item
The author also wishes to express his gratitude to {\bf Grzegorz Świderski}: for his creative mathematical  cooperation and for his  invaluable assistance in the preparation of the LaTeX version of the manuscript.
\eite
\end{center}

\newpage

\tableofcontents

\newpage

\section{Introduction} \label{sec:I}
This work is mainly devoted to a detailed presentation of an important representation theorem  for so-called ``finitely cyclic'' self-adjoint  operators in a Hilbert space. The representation of such an  operator is of the form of the   multiplication by the identity $\xx$ function on $\RR$ in the appropriate space determined by the operator. This, generally, poorly known space is something we devote a lot of attention  here. 

The notion of  finitely cyclic operator is a natural generalisation of the well-known cyclic operator, i.e.,  of such a linear operator $A$  (possibly unbounded) in a linear topological space $\calH$ for which there exists a vector $\varphi \in \calH$ (a {\em cyclic vector for $A$}) such that the linear span of ``the orbit'' \ $\{A^n\varphi: n \in \NN_0\}$ \ is dense in $\calH$. Roughly speaking, for the generalized finitely cyclic notion, instead of one cyclic vector $\varphi$ we should consider a finite system of vectors and we should require the density of the  linear span of the usual set sum of their orbits for $A$. Note, that for  the case of  self-adjoint operators  the names ``operator with finite multiplicity spectrum'' and ``simple spectrum operator'' are  also used in spectral theory literature instead of  ``finitely cyclic operator'' and ``cyclic operator'', respectively.

A well-known result\footnote{Originally it was  probably one of the Stone's results.} (see e.g.  \cite[Lemma 1, Section VII.2]{Reed1980} for the bounded operator case or \cite[Proposition 5.18]{Schmudgen2012} for the general case) says that each cyclic self-adjoint operator can be represented in a unitary equivalent form  in a very fundamental form, in a sense a ''diagonal'' form with a ``canonical'' diagonal. Namely, as the operator of the multiplication by the identity function $\xx$ on $\RR$ in the space $L^2(\mu)$, for some Borel finite  measure  $\mu$ on $\RR$. Moreover, $\mu$ can be taken as  the scalar  spectral measure for the operator  $A$ and any fixed cyclic vector $\varphi$ for $A$. The appropriate representation theorem is called here {\em the $\xx$-multiplication Unitary Equivalence ($\xx$MUE) Theorem  (for the cyclic case)}.

Our main task --- {\em $\xx$MUE Theorem  (for the finitely cyclic case)} --- is the  generalisation of the above  result to the  finitely cyclic case, and it is in many ways  a really analogous result. But to understand this we we have to solve  an introductory  problem of finding  the appropriate background, which allows to formulate the  generalisation. The finitely cyclic case needs some particular,  less-known and more ``sophisticated''  Hilbert spaces than the space $L^2(\mu)$ for  measures $\mu$ (the ``scalar, non-negative'' ones). It requires also some spectral  theory  for ``multiplication by function  operators'' in such spaces.

One of our aims is to show that after solving the above mentioned problem of the background, we can not only formulate the main result in a similar manner, but also the main ideas of the proof would be the same. So, we try to make this this work  self-contained   to a great extent. 

A large part of  of the material  presented here can be found in some  existing, but strongly scattered, mathematical  literature  of  various degree of strictness,  detailedness and even correctness or truthfulness. We give some references, discussing  the appropriate parts of the theory.

In Section  \ref{sec:II} we formulate and prove the classical cyclic version of $\xx$MUE Theorem. We show here also some important details of the proof for the ``full version unbounded case'' based on the so-called {\em polynomial approach} \footnote{In  literature one can find mainly  proofs for the case of $A$ bounded or for a slightly more general special case: when the linear span of the orbit of the cyclic vector is a core space for $A$ (i.e., the closure of the restriction of $A$ to this linear span equals $A$). One of the exceptions is the monograph \cite[subsection 5.4]{Schmudgen2012}, however its  formulations (including also the definition of cyclicity) are based on different notions and are expressed in slightly  different ``non-polynomial''terms.}.

Section  \ref{sec:III} is devoted to the backgrounds mentioned above. We first define and develop   the notion of {\em matrix measure}, being a surprisingly good and convenient generalisation of finite scalar measures, as a foundation to build an ``$L^2$- Hilbert space''.  So,  we introduce the semi-scalar product  space $\calL^2(M)$ of 
$\CC^d$-vector functions for a matrix measure $M$. Using the standard quotient space argument  we then get the space $L^2(M)$  being   our most important  Hilbert space here. 

Somewhat similar, but different  approach, can be found, e.g., in  \cite[subsections XIII.5.6  -- XII.5.11.]{DunSchwII}. 
 
 We would like also to stress that 
this theory  possesses a lot of analogies with their classical $L^2(\mu)$ counterparts based on ``non-negative scalar measures'' or --- equivalently --- the classical $d=1$ case,  but it has also some interesting new delicate and deep features.

An important part \ref{sec:III:5} of Section  \ref{sec:III} is devoted to multiplication by function operators in $L^2(M)$ spaces and to its spectral properties. We present it also with details of the proofs, however it seems to be an almost direct repetition of the standard arguments from the case $d=1$.

Section \ref{sec:IV} contains  formulation  of the  main result ---  $\xx$MUE Theorem  for the finitely cyclic case and its rigorous proof made in the manner being  analog to the one presented for the case $d=1$ in Section  \ref{sec:II}. Some other formulations with no proofs or for some special cases only, or with only  sketches of the proof etc., can be found e.g.  in \cite[Section VI, subsection 86]{AchizerGlazmanII}, \cite[subsection 2.5]{Teshl2000}.

Appendix  \ref{sec:A},  \ref{sec:B} and  \ref{sec:C} contain some extra material concerning several particular spectral results   for $L^2(M)$ spaces and multiplication by function operators. The first two parts have more technical character.  But the main goal of this material
is the last part --  
Appendix   \ref{sec:C}.  It is devoted to the important spectral  notion of {\em the absolute continuity (in a  $G\subset\RR$)} of  self-adjoint operators. We prove  an abstract result (Theorem \ref{thm:C:7}) on the absolute continuity of a finitely cyclic operator and its absolutely continuous spectrum there.

\vspace{5ex}

\subsection*{Notation}\label{notation}
Let us  fix  here some basic  notation and terminology  used. The remaining notation will be  introduced ``locally'', as the need arises (see also List of symbols). 

By $\Pol(\RR)$ we denote the  set of all complex polynomial functions   $f:\RR\strz\CC$. So,   $$\Pol(\RR)=\lin\{\xx^n:\ n\in\NN_0\},$$
where 
\bite
\item $\lin Y$ or $\lin (Y)$ denotes the linear span in the complex sense of the set $Y$, for $Y$  being  a subset of a complex linear space $X$,  
%%(above  $X$ is,  e.g., just the space of  all complex  functions   $f:\RR\strz\CC$)
\item   $\xx$ is the identity function on $\RR$ (i.e. $\xx(t)=t$ for $t\in\RR$), 
\item $f^n$ is the $n$-th power of the scalar function $f$,
\item $\NN_0:=\NN\cup\{0\}$, where $\NN$ is the set of natural numbers.
\eite

When we write \ \   $X'\sublin X$, \ it means that $X'$ is a linear subspace of   a linear space $X$ (--- all in the  complex linear sense). Generally, if not precised, linear spaces are assumed here to be  $\CC$-linear.

By $\Mdc$ we denote the space of all  $d \times d$ complex matrices with $d \in \NN$. For $A\in\Mdc$ and $i,j\in \{1, \ldots, d\}$ the term of $A$ from its  $i$-th row and $j$-th column is denoted by $A_{i,j}$.

Consider  some fixed set $\Omega$.  For $\omega \subset\Omega$ the symbol $\chi_\omega$ denotes the characteristic function on $\Omega$ of the set $\omega$, i.e., for $t\in\Omega$
$$
    \chi_\om(t) = \opcje{1}{t\in\om}{0}{t\notin\om.}
$$

For a function $f$ and a subset $\Omega'$ of its the domain $\restr{f}{S}$ denotes the restriction of $f$ to $S$.

Suppose that we have fixed  also   $\frakM$ --- a $\sigma$-algebra of subsets of $\Omega$.

\label{fn:III:2}

Let us fix some "measure types" terminology used here and concerning  appropriate functions defined on  $\frakM$:
\begin{itemize}
\item
\emph{measure} (without extra adjective or name as ``real'', ``vector'', ``complex'' and so on) = any countably additive function  into $[0; +\infty]$ (so, we shall  usually try to omit the adjectives  ``non-negative'', ``scalar'' etc.);
\item 
\emph{complex measure / real measure / $X$-vector measure (or $X$-measure) / vector measure} = any countable additive function into $\CC / \RR / X / $ a normed space (above: $X$ is a normed space);
\item 
\emph{finite measure}: such a measure $\mu$, that $\mu(\Omega) < +\infty$.
\end{itemize}
So, in particular, a vector or real, or complex measure  ``usually'' {\bf is not} a measure here! Note also that a measure is a complex measure iff it is a finite measure. In all the above cases we add ``\dots on $\frakM$'', when we want to precise the domain of the  function.

Suppose now that $\mu$ is a measure on $\frakM$, and $p\in [0; +\infty]$. By \  $L^p(\mu)$ we denote the standard ``$L^p$''  Banach spaces of {\bf\em classes} of scalar $\frakM$-measurable functions determined by $\mu$ and $p$. It is however  important that for our generalisation reasons we  distinguish here the space of classes  $L^p(\mu)$ and the appropriate space of just  ``$L^p$'' {\bf\em functions}  $\calL^p(\mu)$, being ``only'' a semi-norm linear space.

For the special --- Hilbert --- case $p=2$ we use the notation $[f]$ to denote the class  $[f]\in L^2(\mu)$ for  a function $f \in \calL^2(\mu)$  (note that $[f]$ depends on the measure $\mu$, but we omit it  in the notation if the choice of $\mu$ is clear, and in other cases we introduce some  extra notation). We use also the symbol   $\|[f]\|_\mu$     for the norm of $[f]$ in $L^2(\mu)$     and \  $\tnorm{f}_\mu$  \    for the semi-norm of $f$ in $\calL^2(\mu)$, i.e.
$$
\|[f]\|_\mu =
	\tnorm{f}_\mu =
	\bigg( \int_\Omega |f|^2 \ud \mu \bigg)^{1/2}.
$$

The symbol $\calB(X,Y)$ denotes the space of all the bounded linear operators $A$ on $X$ into $Y$ for   normed  spaces $X$ and $Y$, and the operator norm or each such $A$ is simply denoted by $\|A\|$, if the choice of $X$ and $Y$ is clear. As usual,  $\calB(X):=\calB(X,X)$. 
We consider here unbounded operators with the usual non-strict  meaning of the  unboundedness. That is  a linear operator in $X$ can be not necessary an element of  $\calB(X)$  (--- in particular --- it can be defined not {\bf on} the ``whole'' $X$). So,  $A$ is called here   {\em a (linear)  operator {\bf\em in} $X$}  iff   there exists $X'\sublin X$  such that   
   $A: \Dom(A) \strz Y$ is a linear function ($\Dom(A)$ is the domain of $A$). For such operators the standard algebraic operations  and notions are considered here (see e.g. \cite{Rudin1991}).  E.g. for $n\in \NN_0$  and an operator $A$ in $X$  the $n$-th power $A^n$ is defined, and \ \  $\Dom(A^\infty):= \bigcap_{n\in\NN_0 } \Dom(A^n)$.

We are here mostly interested in self-adjoint\footnote{We use here the usual abbreviation ``s.a.'' for ``self-adjoint''.} operators (unbounded) in  a Hilbert space. If  $\calH$ is  a Hilbert space and $A$ --- a   self-adjoint  operator $A$ in $\calH$, then  the projection-valued spectral measure (``the resolution of identity'') for $A$  we denote by $E_A$. In particular $E_A:\Bor(\RR)\strz \calB(\calH)$, where $\Bor(\RR)$ is the Borel  $\si$-algebra of $\RR$ (Note that we do not restrict here $E_A$ to the Borel subsets of the spectrum $\si(A)$ of $A$ only). 
If, moreover, $x,y\in\calH$,  then $E_{A,x,y}$ is the complex measure ({\em the spectral measure for $A$, $x$ and $y$}) given by
\beql{eq:I:spmes}
E_{A,x,y}(\om)=\lesc E_A(\om)x,y \risc, \quad \om\in  \Bor(\RR),
\eeq
and $E_{A,x}:=E_{A,x,x}$  is a finite measure ({\em the spectral measure for $A$ and $x$}.

If $S\in\Bor(\RR)$, $S\subset\si(A)$, and $f: S\strz\CC$ is a Borel measurable function, then by $f(A)$ we denote as usual  the function $f$ (equivalently: the function $\restr{f}{S}$ or the function $f$ extrapolated to the whole $\RR$ by zero.) of the operator $A$, in the sense of the standard  functional calculus for self-adjoint operators (see e.g.\cite[Section 13: Thm. 13.24 used for the resolution of unity $E_A$ for $A$ given by Thm. 13.30]{Rudin1991})

When $\frakM$ is a $\si$-algebra of subsets of $\Omega$, $\mu$ is a measure on $\frakM$, and $F:\Omega\strz\CC$ is an $\frakM$-measurable function, then \ $T_F$ denotes the (maximal) operator of the multiplication by the function $F$ in the space $L^2(\mu)$. Recall, that 
\begin{align}
	\label{eq:I:opmult}
	&\Dom(T_F) := \big\{ [f] \in L^2(\mu) :    f, F f \in \calL^2(\mu) \big\} \\
	\nonumber
	&T_F[f] := [F f] \quad
	\text{for } [f] \in \Dom(T_F).
\end{align}

Note that $T_F$ depends on the measure $\mu$, but we omit it  in the notation, and we add some extra  explanation or  notation, if the choice of $\mu$ is not clear. Note also that we shall extrapolate the notation $T_F$ in subsection \ref{sec:III:5} also for more general case of the 
operator of the multiplication by the function $F$ in the space $L^2(M)$, for matrix measure $M$.

We  frequently use here the abbreviations:   
STh, FCTh and STh+FCTh. -  This are just the references to the literature concerning the Spectral Theorem for self-adjoint operators, to the Functional Calculus Theorem (for  resolutions of the identity and/or for  self-adjoint operators), and to both of the, respectively. In fact we mainly  recommend this way \cite[Section 13: Thm. 13.24 as FCTh and  Thm. 13.30 as STh]{Rudin1991}. But see also \cite[subsections 5.3 and 5.3]{Schmudgen2012}.

\newpage

\section{A detailed proof of The "$\xx$MUE" Theorem for self-adjoint cyclic operators} \label{sec:II}

Let us begin here  from the strict definition of the cyclicity of an  operator. If  $A$ is a linear operator in a normed space $X$, and $\varphi\in\Dom(A^\infty)$, then we can consider {\em the orbit of $\varphi$ with respect to $A$}:
$$
\Orb_A(\varphi) := \left\{ A^n \varphi : n \in \NN_0 \right\}. 
$$

\begin{definition}
\emph{$\varphi$ is a cyclic vector  for $A$} iff 
\begin{equation}
	\label{eq:IV:1.1}
	\varphi \in \Dom(A^\infty) \quad \text{and} \quad
	\lin\Orb_A(\varphi)  \text{ is  dense in } X.
\end{equation}
(i.e., the orbit is linearly dense)
\emph{$A$ is cyclic} iff there exists a cyclic vector in $X$ for $A$.
\end{definition}

We present here  a detailed  proof  of the  representation theorem  for  cyclic self-adjoint  operators in  Hilbert space. As we shall see, the core-idea of the proof is almost trivial, if  we can base on  STh+FCTh. So the ``main part'' of the proof, sufficient to get the result, e.g., for bounded $A$, is presented in the first subsection. And some extra  arguments, which allows to get the general case,  ``more delicate'' ones, are placed in the second part of the section.    

The reason of presenting here this theorem and its proof is just to show a starting point for the generalisation of the result ``from one vector $\varphi$ to the finite system of vectors $(\varphi_1, \ldots\varphi_d)$.

\vspace{5ex}

\subsection{The $\xx$-multiplication Unitary Equivalence ("$\xx$MUE") Theorem --- The cyclic case\ } \ \label{sec:II:1}

We need here one more symbol. If $\mu$ is a measure on  $\Bor(\RR)$ such that
\beql{III:all-moments}
\xx^n \in \calL^2(\mu) \ \ \ \mbox{for any} \ \ n\in\NN_0
\eeq
(which, as one can easily check, equivalently means that all moments of  $\mu$ exist and are finite), 
then 
$$
\Pol_\mu(\RR):=\{[f]\in L^2(\mu): f\in\Pol(\RR)\}= \lin\{[\xx^n]\in L^2(\mu):n\in\NN_0\}.
$$
The classes $[\cdot]$ mean  above  the classes in the sense of $L^2(\mu)$, which  is clearly equivalent also  to the sense of $\mu$ - a.e. equality of Borel functions\footnote{More precisely: if $f\in\calL^2(\mu)$, then simply $[f]=\{g\in\Bor(\RR): f\eqaemu g \}$.}.

Let $\calH$ be a complex Hilbert space.

\begin{theorem}[$\xx$MUE] \label{thm:II:xMUE}
If $A$ is a cyclic s.a. operator in $\calH$, and $\varphi$ is a cyclic vector for $A$, then $A$ is unitary equivalent to $T_{\xx}$ in $L^2(\mu)$, where $\mu = E_{A, \varphi}$.

Moreover,
\begin{enumerate}[(i)]
\item \eqref{III:all-moments} holds and  $\Pol_\mu(\RR)$ is a dense subspace of $L^2(\mu)$;

\item $\exists!_{U \in \calB(L^2(\mu), \calH)} \ \forall_{n \in \NN_0} \ U([\xx^n]) = A^n \varphi$;

\item the unique $U$ from (ii) is a unitary operator from $L^2(\mu)$ onto $\calH$ and it is defined by
\begin{equation} 
	\label{eq:II:1.1}
	U([f]) = f(A) \varphi, \quad 
	f \in \calL^2(\mu),
\end{equation}
and
\begin{equation} 
	\label{eq:II:1.2}
	A = U T_{\xx} U^{-1}.
\end{equation}
\end{enumerate}
\end{theorem}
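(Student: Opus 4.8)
The plan is to build the unitary $U$ directly from the functional calculus and then verify that it intertwines $T_\xx$ with $A$. First I would establish part (i): since $\varphi \in \Dom(A^\infty)$, for each $n$ the vector $A^n\varphi$ lies in $\calH$, and by the spectral theorem $\|A^n\varphi\|^2 = \int_\RR \xx^{2n}\,\ud E_{A,\varphi} = \int_\RR |\xx^n|^2\,\ud\mu < \infty$, so $\xx^n \in \calL^2(\mu)$ and \eqref{III:all-moments} holds. More generally, for a polynomial $f = \sum c_k \xx^k$ the FCTh gives $f(A)\varphi = \sum c_k A^k\varphi$ and $\|f(A)\varphi\|^2 = \int_\RR |f|^2\,\ud\mu = \tnorm{f}_\mu^2$. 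Density of $\Pol_\mu(\RR)$ in $L^2(\mu)$ will follow from the cyclicity assumption: $\lin\Orb_A(\varphi) = \{f(A)\varphi : f \in \Pol(\RR)\}$ is dense in $\calH$, and once $U$ is shown to be an isometry carrying $\Pol_\mu(\RR)$ onto this set, density transfers back.

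Next I would define a map on $\Pol(\RR)$ by $f \mapsto f(A)\varphi$ and observe it descends to classes: if $[f] = [g]$ in $L^2(\mu)$, i.e. $f \eqaemu g$, then by the FCTh $f(A)\varphi = g(A)\varphi$ (the spectral measure of $\varphi$ is exactly $\mu$, so $\mu$-null sets are $E_A$-null for the vector $\varphi$). This gives a well-defined linear map $U_0 \colon \Pol_\mu(\RR) \strz \calH$ with $U_0([f]) = f(A)\varphi$, and the computation above shows $\|U_0([f])\|_\calH = \|[f]\|_\mu$, so $U_0$ is isometric. Since $\Pol_\mu(\RR)$ is dense in $L^2(\mu)$ and $\calH$ is complete, $U_0$ extends uniquely to an isometry $U \in \calB(L^2(\mu),\calH)$; its range is closed and contains the dense set $\lin\Orb_A(\varphi)$, hence $U$ is onto, i.e. unitary. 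Uniqueness in (ii) is immediate because the values $U([\xx^n]) = A^n\varphi$ determine $U$ on the dense subspace $\Pol_\mu(\RR)$; this settles (ii) and the unitarity claim in (iii).

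For formula \eqref{eq:II:1.1}, i.e. $U([f]) = f(A)\varphi$ for all $f \in \calL^2(\mu)$ (not just polynomials), I would use an approximation argument: given $f \in \calL^2(\mu)$, pick bounded Borel $f_k \to f$ in $L^2(\mu)$ (e.g. truncations), for which $f_k(A)\varphi$ makes sense via the bounded functional calculus and $\|f_k(A)\varphi - f(A)\varphi\|^2 = \int |f_k - f|^2\,\ud\mu \to 0$; combined with continuity of $U$ and the fact that $U([f_k]) = f_k(A)\varphi$ (itself obtained by approximating each bounded $f_k$ by polynomials in $L^2(\mu)$, again using cyclicity/FCTh), one gets $U([f]) = f(A)\varphi$. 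Finally, for the intertwining \eqref{eq:II:1.2}: for $f \in \calL^2(\mu)$ with $\xx f \in \calL^2(\mu)$ one has $U T_\xx [f] = U([\xx f]) = (\xx f)(A)\varphi = A(f(A)\varphi) = A U([f])$, using that $(\xx f)(A) = A\, f(A)$ on the relevant domain from the multiplicativity of the functional calculus; conversely, if $U([f]) \in \Dom(A)$ one shows $[f] \in \Dom(T_\xx)$ by the same spectral-integral computation, so the domains match and $U T_\xx U^{-1} = A$.

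The main obstacle I expect is the careful handling of domains and of the extension from bounded functions to general $\calL^2(\mu)$-functions in the unbounded case: one must make sure that $f(A)\varphi$ is genuinely defined for every $f \in \calL^2(\mu)$ (which holds precisely because $\mu = E_{A,\varphi}$ is the scalar spectral measure of $\varphi$, so $\calL^2(\mu)$ is exactly the class of functions square-integrable against the spectral measure of $\varphi$), and that the equality of unbounded operators $A = U T_\xx U^{-1}$ is proved as an equality of operators \emph{with their domains}, not merely on a common core. The ``main part'' — the bounded case — needs only the polynomial isometry and the algebraic intertwining; the delicate extra work is entirely in the unbounded-domain bookkeeping.
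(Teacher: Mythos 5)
Your construction of $U$ contains a genuine circularity. You build $U_0$ on $\Pol_\mu(\RR)$ and then write ``since $\Pol_\mu(\RR)$ is dense in $L^2(\mu)$ \dots $U_0$ extends uniquely to an isometry $U\in\calB(L^2(\mu),\calH)$'', while the density itself is supposed to ``transfer back'' from the unitarity of $U$. As written, the density extension only produces an isometry defined on the closed subspace $\overline{\Pol_\mu(\RR)}$, and the fact that this restricted isometry maps \emph{onto} $\calH$ does not force $\overline{\Pol_\mu(\RR)}=L^2(\mu)$ (an isometry of a proper closed subspace onto $\calH$ is perfectly possible). The density of polynomials is not automatic here: for a general finite measure with all moments finite, $\Pol_\mu(\RR)$ need not be dense in $L^2(\mu)$ (indeterminate moment problem), so it really must be extracted from cyclicity, and the only way to do that is to have $U$ defined on \emph{all} of $L^2(\mu)$ first. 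The repair is exactly the observation you relegate to your last paragraph: since $\mu=E_{A,\varphi}$, one has $\varphi\in\Dom(f(A))$ and $\|f(A)\varphi\|=\tnorm{f}_\mu$ for \emph{every} $f\in\calL^2(\mu)$, so $\tilde U f:=f(A)\varphi$ is a globally defined isometry from the start (this is how the paper proceeds); \eqref{eq:II:1.1} is then a definition rather than something to be recovered by a two-stage approximation, the range is closed and contains the dense orbit, hence $U$ is unitary, and the density of $\Pol_\mu(\RR)$ follows by pulling the orbit back through $U^{-1}$. Reordered this way, your Part 1 coincides with the paper's.

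For \eqref{eq:II:1.2} you take a genuinely different route from the paper. You prove the inclusion $UT_\xx U^{-1}\subset A$ via $\xx(A)f(A)\subset(\xx f)(A)$ and then match domains by hand in the converse direction; the key identity needed there (and only gestured at by ``the same spectral-integral computation'') is $E_{A,f(A)\varphi}=|f|^2\,\ud\mu$, which gives $f(A)\varphi\in\Dom(A)\Leftrightarrow \xx f\in\calL^2(\mu)$. This is the ``direct patching'' proof the paper mentions and deliberately avoids; it is correct, and it can be shortened: once $UT_\xx U^{-1}\subset A$ is established, equality follows immediately because both operators are self-adjoint and a self-adjoint operator admits no proper self-adjoint extension. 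The paper instead proves $UT_{\chi_\omega}=E_A(\omega)U$ for every Borel $\omega$ --- an identity between \emph{bounded} operators, checkable on the dense set $\{[\xx^n]\}$ --- and invokes uniqueness of the spectral resolution. That device avoids all domain bookkeeping and, more importantly, is the one that transfers verbatim to the finitely cyclic case in Section~\ref{sec:IV}; your route buys a more self-contained argument at the price of the unbounded-domain analysis you correctly identify as the delicate point.
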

\begin{proof}[Proof -- Part 1 {the standard consequences of FC...}]
Define linear $\tilde{U} : \calL^2(\mu) \to \calH$ by formula
\[
	\tilde{U} f := f(A) \varphi, \quad
	f \in \calL^2(\mu).
\]
Note that the RHS above is a correct definition of an element of $\calH$ by STh + FCTh (and $\tilde{U}$ is linear), because
\begin{equation} 
	\label{eq:II:1.3}
	\Dom \big( f(A) \big) = 
	\Big\{ y \in \calH : \int_\RR |f|^2 \ud E_{A,y} < \infty \Big\}
\end{equation}
and for $y \in \Dom \big( f(A) \big)$
\begin{equation} 
	\label{eq:II:1.4}
	\| f(A) y \| = 
	\bigg( \int_\RR |f|^2 \ud E_{A,y}) \bigg)^{1/2} =
	\tnorm{f}_\mu,
\end{equation}
so by $f \in \calL^2(\mu)$ and $\mu = E_{A, \varphi}$ for $y=\varphi$ we get $\varphi \in \Dom \big( f(A) \big)$ and
\begin{equation} 
	\label{eq:II:1.5}
	\|\tilde{U} \varphi \|^2 = 
	\| f(A) \varphi \|^2 =
	\tnorm{f}^2_\mu = 
	\| [f] \|^2_{\mu}.
\end{equation}
Moreover, by \eqref{eq:II:1.5}, $\{ f \in \calL^2(\mu) : \tnorm{f}_\mu = 0 \} \subset \Ker{\tilde{U}}$, so $\tilde{U}$ can be factorized to $U: L^2(\mu) \to \calH$ by formula
\begin{equation} 
	\label{eq:II:1.6}
	U([f]) := \tilde{U}(f), \quad
	f \in \calL^2(\mu),
\end{equation}
and $U$ is bounded operator given by \eqref{eq:II:1.1} being also an isometry on its range (image).

One of the "standard" consequences (typical exercise) of STh+FCTh is:
\[
	\forall_{n \in \NN_0} \ \xx^n(A) = A^n,
\]
hence -- we start to use the cyclicity here -- knowing that $\varphi$ is cyclic vector, we have in particular that $\varphi \in \Dom(A^n)$ (for any $n \in \NN_0$), so by \eqref{eq:II:1.3} for $f = \xx^n$ and $y = \varphi$ we get $[\xx^n] \in L^2(\mu)$, and $\Pol_\mu(\RR) = \lin \{ [\xx^n] : n \in \NN_0 \} \subset L^2(\mu)$. We also have
\begin{equation} 
	\label{eq:II:1.6'}
	\forall_{n \in \NN_0} \ U([\xx^n]) = 
	\tilde{U} \xx^n = 
	\xx^n(A) \varphi = 
	A^n \varphi,
\end{equation}
so $U$ is one of the bounded operators as in (ii). Moreover, by the cyclicity of $\varphi$ for $A$ $\lin \{ A^n \varphi : n \in \NN_0 \}$ is dense in $\calH$, so the isometry $U$ is onto $\calH$, i.e. $U$ is a unitary transformation from $L^2(\mu)$ to $\calH$, and $\Pol_\mu(\RR)$ is also dense in $L^2(\mu)$.

Therefore we proved (i) and the "$\exists$" part of (ii), but the "!" (the uniqueness) part of (ii) is obvious by the continuity requirement for the operator, and by the density from (i) (just proved). And thus, it remains only to prove \eqref{eq:II:1.2} from (iii). -- We shall do this in the next part of the proof in full generality. But let us make here a remark, that this remaining part is quite obvious if $A$ is e.g. a bounded operator. Indeed -- observe, that for any $n \in \NN_0$ we obviously have:
\begin{equation}
	\label{eq:II:1.7}
		U^{-1} A (A^n \varphi) =
		U^{-1} A^{n+1} \varphi =
		[\xx^{n+1}] =
		[\xx \cdot \xx^n] =
		T_\xx[\xx^n] = 
		T_{\xx} U^{-1} (A^n \varphi),
\end{equation}
where the last equality follows from $U([\xx^n]) = A^n \varphi$, which means, that the equality
\[
	A y = U T_{\xx} U^{-1} y
\]
is true for \emph{any $y$} from a linearly dense set $\{ A^n \varphi : n \in \NN_0 \}$. So \eqref{eq:II:1.2} holds with the extra assumption: $A \in \calB(\calH)$. Note also, that the argumentation based on \eqref{eq:II:1.7} works with a weaker assumption than the boundedness of $A$. Namely, with the assumption, that $\varphi$ is an essential cyclic vector for $A$. In such a case, both $A$ and $U T_{\xx} U^{-1}$ are s.a. extensions of $\restr{A}{\lin \{ A^n \varphi : n \in \NN_0 \}}$, which is ess.s.a, so they are equal.

But we see, that it is not so obvious, that our assertion remains true, without the essential cyclicity assumption. The proof above does not work, because of "the domain details". But the result \emph{is} true, however! There exist several proofs of \eqref{eq:II:1.2}. One of them just directly "patches the holes" in the proof above, but it is long and "technical". We shall use another proof, based on a certain convenient idea (which we plan to use later -- for the generalized result).
\end{proof}

\vspace{5ex}

\subsection{The ``delicate'' part of the proof of ``$\xx$MUE Theorem''} \label{sec:II:2}
\begin{proof}[Proof of "$\xx$MUETh"-- Part 2 {the final (general case) part}]
The main idea of the proof of \eqref{eq:II:1.2} is to omit the difficulties related to the domains of unbounded operators, and to prove "\eqref{eq:II:1.2}" for "spectral projections of $A$ instead of $A$". -- Namely, we shall prove that
\begin{equation}
	\label{eq:II:2.1}
	\forall_{\omega \in \Bor(\RR)} \
	E_{T_{\xx}}(\omega) = E_{U^{-1} A U}(\omega),
\end{equation}
which would give \eqref{eq:II:1.2} by STh+FCTh ("s.a. operator is the spectral integral of $\xx$ with respect to its spectral measure"). So, fix $\omega \in \Bor(\RR)$. The spectral resolution of $T_{\xx}$ in $L^2(\mu)$ is well known:
\[
	E_{T_{\xx}}(\omega) = T_{\chi_\omega}.
\]
We also have $E_{U^{-1} A U}(\omega) = U^{-1} E_{A}(\omega) U$, hence we should prove only
\[
	U T_{\chi_\omega} = E_A(\omega) U.
\]
But both sides above are bounded operators on $L^2(\mu)$, so by the linear density of $\{ [\xx^n] : n \in \NN_0 \}$ (already proved) it suffices to check, that for any $n \in \NN_0$
\[
	U ([\chi_\omega \xx^n]) \overset{\text{def. of $T_f$}}{=}
	U T_{\chi_\omega}([\xx^n]) \overset{?}{=} 
	E_A U([\xx^n]) \overset{\text{see (ii)}}{=} 
	E_A(\omega)(A^n \varphi)
\]
So we need to check that
\begin{equation}
	\label{eq:II:2.2}
	U ([\chi_\omega \xx^n]) = E_A(\omega)(A^n \varphi).
\end{equation}
But we have 
\[
	U ([\chi_\omega \xx^n]) = (\chi_\omega \cdot \xx^n)(A) \varphi
\]
by \eqref{eq:II:1.6}. And also
\[
	E_A(\omega)(A^n \varphi) = 
	\big( \chi_\omega(A) \xx^n(A) \big) \varphi =
	(\chi_\omega \cdot \xx^n)(A) \varphi;
\]
(because "$f(A) \cdot g(A) \subset (f \cdot g)(A)$", FCTh.), so \eqref{eq:II:2.2} holds!
\end{proof}

The above idea and a similar proof of details will be used soon in the "more sophisticated" case -- the generalization of $\xx$MUETh to finitely cyclic s.a. operators.

\newpage

\section{Matrix measure, $L^2$-space for matrix measure and multiplication by function operators} \label{sec:III}

\vspace{3ex}

\subsection{The matrix measure and its trace measure} \label{sec:III:1}
Consider a set $\Omega$ and $\frakM$ -- a $\sigma$-algebra of subsets of $\Omega$. Let $d \in \NN$ and consider a $d \times d$ complex matrix valued function $M$ on $\frakM$, i.e.
\[
	M : \frakM \to \Mat{d}{\CC}.
\]

\begin{definition}[matrix measure]
\emph{$M$ is a matrix measure (on $\frakM, d \times d$)} iff
\begin{enumerate}[1)]
\item $M$ is countably additive [abrev.: c.add.] in the norm sense in $\Mat{d}{\CC}$\footnote{That is: $M$ is a $\Mat{d}{\CC}$ - vector measure, due to  our terminology here. But note the terminological difference with some other sources:   e.g., due to \cite{Diestel1977},  {\em vector measure} does not have to be countably additive, in general. So this is ``a countably additive vector measure'' in the \cite{Diestel1977} terminology.};
\item $M(\omega) \geq 0$\footnote{\label{fn:III:1} Here $A \geq B$ is in the Hilbert space $\CC^d$ sense:
\[
	A \geq B \quad \Leftrightarrow \quad
	\forall_{x \in \CC^d} \ 
	\RR \ni \langle A x, x \rangle_{\CC^d} \geq \langle B x, x \rangle_{\CC^d} \in \RR.
\]
In particular, both $A,B$ should be self-adjoint, and $A \geq B \Leftrightarrow A-B \geq 0$ (is non-negative). And $A \leq B$ means $B \geq A$...} for any $\omega \in \frakM$.
\end{enumerate}
\end{definition}

In other words, matrix measure is just the "$\Mat{d}{\CC}$-vector non-negative measure". Surely, norm sense of the c.add. above is equivalent to c.add. "on each matrix term".

A simplest example of matrix measure is the $d=1$ case, because then, if we identify $\Mat{1}{d}$ and $\CC$, matrix measure is just a finite measure. Note, that we omit "non-negative" in the name "matrix measure" (although that would be somewhat more consistent with the terminology explained below (as e.g. "$X$ measure")) by tradition.

For a $d \times d$ matrix measure $M$ and $i,j=1,\ldots, d$ let us define $M_{ij}: \frakM \to \CC$ by
\begin{equation}
	\label{eq:III:1.1}
	M_{ij}(\omega) := \big( M(\omega) \big)_{ij}, \quad 
	\omega \in \frakM.
\end{equation}

\begin{proposition} \label{prop:III:2}
If $M$ is a matrix measure, then:
\begin{enumerate}[(a)]
\item $M(\emptyset) = 0$;
\item (finite add.) $\forall_{\omega, \omega' \in \frakM} \ \big(\omega \cap \omega' = \emptyset \Rightarrow M(\omega \cup \omega') = M(\omega) + M(\omega') \big)$;
\item (monotonicity) $\forall_{\omega, \omega' \in \Omega} \ \big( \omega' \subset \omega \Rightarrow M(\omega') \leq M(\omega) \big)$;
\item $\forall_{i,j=1,\ldots,d} \ M_{ij}$ is a complex measure, $M_{ij} = \overline{M_{ji}}$, $M_{ii}$ is a finite measure.
\end{enumerate}
\end{proposition}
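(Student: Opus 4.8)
The plan is to derive everything in Proposition \ref{prop:III:2} from the two defining properties of a matrix measure --- countable additivity (in the norm sense on $\Mdc$) and non-negativity $M(\omega)\ge 0$ --- by transferring known facts about scalar complex measures to the matrix entries $M_{ij}$, together with elementary linear algebra on the cone of positive semidefinite matrices. First I would observe that, since convergence in $\Mdc$ in norm is equivalent to entrywise convergence, countable additivity of $M$ is equivalent to countable additivity of each $M_{ij}:\frakM\to\CC$; hence each $M_{ij}$ is a complex measure in the terminology of the paper. Part (d) then follows: each $M_{ij}$ is a complex measure by the above; the relation $M_{ij}=\overline{M_{ji}}$ holds pointwise on $\frakM$ because $M(\omega)\ge 0$ forces $M(\omega)$ to be self-adjoint, i.e. $\big(M(\omega)\big)_{ij}=\overline{\big(M(\omega)\big)_{ji}}$; and $M_{ii}$ is a finite measure because $M(\omega)\ge 0$ gives $M_{ii}(\omega)=\langle M(\omega)e_i,e_i\rangle_{\CC^d}\ge 0$, so $M_{ii}$ is a non-negative scalar measure, and it is finite since a complex measure is by definition finite (equivalently, $M_{ii}(\Omega)=\langle M(\Omega)e_i,e_i\rangle<\infty$ as $M(\Omega)\in\Mdc$).

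Next, part (a): $M(\emptyset)=0$ follows from countable additivity applied to the decomposition $\emptyset=\bigcup_{n}\emptyset$ exactly as in the scalar case --- the value $M(\emptyset)$ must equal the sum of countably many copies of itself in the normed space $\Mdc$, which forces $M(\emptyset)=0$. Alternatively, one can read it off entrywise from (d) using that each complex measure vanishes on $\emptyset$. Part (b), finite additivity, is then the standard consequence of countable additivity together with (a): given disjoint $\omega,\omega'$, apply countable additivity to the sequence $\omega,\omega',\emptyset,\emptyset,\dots$ and use $M(\emptyset)=0$; again this may instead be obtained entrywise from the corresponding property of the complex measures $M_{ij}$.

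For part (c), monotonicity, I would argue as follows. Given $\omega'\subset\omega$ with $\omega',\omega\in\frakM$, write $\omega=\omega'\cup(\omega\setminus\omega')$ as a disjoint union in $\frakM$, so by finite additivity (b), $M(\omega)=M(\omega')+M(\omega\setminus\omega')$. By the defining property 2) of a matrix measure, $M(\omega\setminus\omega')\ge 0$, hence $M(\omega)-M(\omega')=M(\omega\setminus\omega')\ge 0$, which by the definition of $\ge$ on $\Mdc$ (footnote \ref{fn:III:1}) means precisely $M(\omega')\le M(\omega)$. (I note the statement as printed says $\omega,\omega'\in\Omega$, which should read $\omega,\omega'\in\frakM$; I would use the corrected hypothesis.)

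There is no serious obstacle here: the whole proposition is a routine unwinding of definitions. The only point that requires a moment's care --- and the closest thing to a ``main step'' --- is making explicit the equivalence between norm-countable-additivity of the matrix-valued $M$ and countable additivity of each scalar entry $M_{ij}$, since all of parts (a), (b), (d) can then be imported directly from scalar complex measure theory, while (c) additionally uses the positivity hypothesis and the ordering of self-adjoint matrices. I would present the entrywise reduction once at the start and then treat (a)--(d) briskly.
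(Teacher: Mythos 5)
Your proof is correct, and since the paper states Proposition \ref{prop:III:2} without any proof (treating it as a routine consequence of the definition), your argument simply supplies the obviously intended details: entrywise reduction of countable additivity for (a), (b), (d), self-adjointness of $M(\omega)$ from non-negativity for $M_{ij}=\overline{M_{ji}}$, and the disjoint decomposition $\omega=\omega'\cup(\omega\setminus\omega')$ plus positivity for (c). Nothing is missing, and your remark that the quantifier in (c) should read $\omega,\omega'\in\frakM$ rather than $\omega,\omega'\in\Omega$ is a fair correction of a typo in the statement.
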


Note, that the point (c) gives the following important property of any matrix measure $M$:
\begin{equation}
	\label{eq:III:1.2}
	\forall_{\substack{\omega, \omega' \in \frakM\\\omega' \subset \omega}} \quad
	M(\omega) = 0 \Rightarrow M(\omega') = 0.
\end{equation}
The above follows from: $0 \leq A \leq 0 \Rightarrow A = 0$, which is true (and easy to check) for the "matrix $\leq$". The property \eqref{eq:III:1.2} shows how similar is any matrix measure to "usual" measures (observe that the analog of \eqref{eq:III:1.2} for complex or vector measures is generally not true!).

A "stronger similarity" to measures follows from the next fact, stating that any matrix measure is absolutely continuous with respect to a certain measure (moreover -- a finite measure). For $M$ -- a matrix measure, denote by $\tr_M$ the function from $\frakM$ into $\CC$ given by
\begin{equation}
	\label{eq:III:1.3}
	\tr_M(\omega) := \tr \big( M(\omega) \big), \quad
	\omega \in \frakM.
\end{equation}
By d) of Proposition~\ref{prop:III:2} we know, that $\tr_M$ is a finite measure, as sum of $d$-finite measures.

\begin{proposition} \label{prop:III:3}
If $M$ is a matrix measure, then $M$ and each $M_{ij}$ for $i,j=1,\ldots,d$ are absolutely continuous with respect to $\tr_M$, i.e.,
\[
	\forall_{\omega \in \frakM}\ 
	\Big( \tr_M(\omega) = 0 \Rightarrow \big( M(\omega) = 0 \text{ and }
	\forall_{i,j=1,\ldots,d} \ M_{ij}(\omega) = 0 \big) \Big).
\]
\end{proposition}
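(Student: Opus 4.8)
The plan is to reduce the whole statement to one elementary linear-algebra fact: \emph{a non-negative $d\times d$ complex matrix with zero trace is the zero matrix}. Granting this, the proof is immediate. Fix $\omega\in\frakM$ with $\tr_M(\omega)=0$. By the definition of a matrix measure, $M(\omega)\geq 0$ (in the $\CC^d$ sense), and $\tr\big(M(\omega)\big)=\tr_M(\omega)=0$; hence $M(\omega)=0$, and therefore $M_{ij}(\omega)=\big(M(\omega)\big)_{ij}=0$ for all $i,j=1,\ldots,d$ as well. Since $\tr_M$ is a finite measure (as already noted, by part (d) of Proposition~\ref{prop:III:2}), this is precisely the asserted absolute continuity of $M$ and of each $M_{ij}$ with respect to $\tr_M$.

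It remains to prove the linear-algebra fact, and here I would argue as follows. Let $A\in\Mat{d}{\CC}$ with $A\geq 0$, and let $(e_1,\ldots,e_d)$ be the standard basis of $\CC^d$. Each diagonal entry is $A_{ii}=\langle A e_i,e_i\rangle_{\CC^d}\geq 0$, so $0=\tr A=\sum_{i=1}^d A_{ii}$ is a sum of non-negative reals and forces $A_{ii}=0$ for every $i$. For the off-diagonal entries, note that since $A$ is non-negative (hence self-adjoint) the form $(x,y)\mapsto\langle Ax,y\rangle_{\CC^d}$ is a semi-scalar product on $\CC^d$, so the Cauchy--Schwarz inequality gives
\[
	|A_{ij}|^2=\big|\langle A e_i,e_j\rangle_{\CC^d}\big|^2
	\leq\langle A e_i,e_i\rangle_{\CC^d}\,\langle A e_j,e_j\rangle_{\CC^d}
	=A_{ii}A_{jj}=0 .
\]
Thus every entry of $A$ vanishes, i.e. $A=0$. (Alternatively: $\tr A$ is the sum of the eigenvalues of $A$, all of which are $\geq 0$, so they all vanish, and a non-negative matrix with zero spectrum is $0$.)

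I do not expect any genuine obstacle here; the entire content is the elementary fact above, which is the natural ``matrix'' strengthening of the scalar implication $0\le a\le 0\Rightarrow a=0$ already used right after \eqref{eq:III:1.2}. The only point deserving a word of care is that ``absolutely continuous with respect to $\tr_M$'' is meaningful at all, i.e.\ that $\tr_M$ is an honest (finite) measure --- but this has been recorded just before the proposition.
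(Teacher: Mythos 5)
Your proof is correct and follows essentially the same route as the paper: both reduce the proposition to the elementary fact that a non-negative matrix with zero trace vanishes (the paper phrases this via the inequality $0\le A\le \tr(A)\cdot I$ obtained by diagonalizing, combined with its earlier remark that $0\le A\le 0\Rightarrow A=0$, while you verify the entries directly via Cauchy--Schwarz). The surrounding bookkeeping --- $M_{ij}(\omega)=(M(\omega))_{ij}$ and the finiteness of $\tr_M$ --- matches the paper as well.
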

\begin{proof}
The assertion for $M$ follows directly from the following matrix property:
\begin{equation}
	\label{eq:III:1.4}
	\text{If } A \in \Mat{d}{\CC} \text{ and } A \geq 0, \text{ then }
	A \leq \tr(A) \cdot I
\end{equation}
(just diagonalize $A$ ...), and this obviously gives the assertion for any complex measure $M_{ij}$, too.
\end{proof}

We get more by \eqref{eq:III:1.4}: for a matrix measure $M$ on $\frakM$
\begin{equation}
	\label{eq:III:1.4'}
	\forall_{\omega \in \frakM} \ 0 \leq M(\omega) \leq \tr_M(\omega) I.
\end{equation}

Now, using Proposition~\ref{prop:III:3} and Radon-Nikodym Theorem, let us choose for each $i,j=1,\ldots,d$ a density $d_{M,i,j} : \Omega \to \CC$ of $M_{ij}$ with respect to $\tr_M$ -- note, that we have a free choice of $d_{M,i,j}$ up to the $\tr_M$ a.e. equality only. Then we define $D_{M} : \Omega \to \Mat{d}{\CC}$ by
\[
	\big( D_{M}(t) \big)_{i,j} := d_{M,i,j}(t), \quad
	t \in \Omega,\ i,j=1,\ldots,d.
\]
So, $D_M$ is a function defined also up to the $\tr_M$ a.e. equality. -- By $\DD_M$ we denote the class of all such measurable functions. Hence, finally, for any $D_M \in \DD_M$
\begin{equation}
	\label{eq:III:1.5}
	\forall_{\omega \in \frakM} \ M(\omega) = 
	\int_\omega D_M \ud \tr_M \quad 
	\bigg( = \int_\omega D_M(t) \ud \tr_M(t) \bigg),
\end{equation}
where the integral above is just the standard Lebesgue integral of the $\Mat{d}{\CC} \simeq \CC^{d^2}$-valued function with respect to the measure $\tr_M$ (the integral is defined coordinatewise, by the appropriate scalar valued function integrals).

Let us give some names, to the above denoted objects determined by matrix measure $M$.

\begin{definition}[trace measure, trace density, ...]
We call:
\begin{itemize}
\item $\tr_M$ -- \emph{the trace measure}
\item $\DD_M$ -- \emph{the trace density class}
\item each $D_M \in \DD_M$ -- \emph{the trace density}
\end{itemize}
\emph{of $M$}.
\end{definition}

The trace density cannot be just an arbitrary "matrix-$\calL^1(\tr_M)$" function ($L^1$ is just by Radon-Nikodym Theorem)!

\begin{proposition} \label{prop:III:5}
Any trace density $D_M$ of a matrix measure $M$ satisfies:
\[
	D_M(t) \text{ is self-adjoint and }
	0 \leq D_M(t) \leq I \text{ for } \tr_M \text{a.e.} t \in \Omega.
\]
\end{proposition}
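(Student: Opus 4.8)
The plan is to reduce the matrix statement to scalar measure theory by testing $D_M$ against vectors. Fix $x\in\CC^d$ and set $g_x(t):=\langle D_M(t)x,x\rangle$ (the $\CC^d$ inner product). Since each entry $d_{M,i,j}$ of $D_M$ lies in $\calL^1(\tr_M)$ by Radon--Nikodym, and $g_x=\sum_{i,j}d_{M,i,j}\,x_j\overline{x_i}$ is a fixed finite linear combination of them, we have $g_x\in\calL^1(\tr_M)$. Because the matrix integral in \eqref{eq:III:1.5} is taken entrywise, for every $\omega\in\frakM$
\[
\int_\omega g_x\,\ud\tr_M=\Big\langle\Big(\textstyle\int_\omega D_M\,\ud\tr_M\Big)x,x\Big\rangle=\langle M(\omega)x,x\rangle .
\]
Now $M(\omega)\ge 0$ forces $\langle M(\omega)x,x\rangle\ge 0$ (in particular real), and \eqref{eq:III:1.4'} gives $\langle M(\omega)x,x\rangle\le\tr_M(\omega)\,\|x\|^2=\int_\omega\|x\|^2\,\ud\tr_M$.

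Next I would invoke the standard scalar facts for the finite measure $\nu:=\tr_M$: if $h\in\calL^1(\nu)$ and $\int_\omega h\,\ud\nu\ge 0$ for all $\omega\in\frakM$, then $h\ge 0$ $\nu$-a.e.; and if $\int_\omega h\,\ud\nu=0$ for all $\omega$, then $h=0$ $\nu$-a.e. (both proved by testing on $\omega=\{h<-1/n\}$, resp.\ applying the first to $\pm\operatorname{Re}h,\pm\operatorname{Im}h$). Applying these to $\operatorname{Im}g_x$, to $g_x$, and to $\|x\|^2-g_x$ yields a $\tr_M$-null set $N_x$ such that for $t\notin N_x$ the number $g_x(t)$ is real and $0\le g_x(t)\le\|x\|^2$.

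Finally I would remove the dependence on $x$ by density. Let $Q:=(\mathbb{Q}+i\mathbb{Q})^d$, a countable dense subset of $\CC^d$, and put $N:=\bigcup_{x\in Q}N_x$, still $\tr_M$-null. For $t\notin N$ the map $x\mapsto\langle D_M(t)x,x\rangle$ is continuous on $\CC^d$ and takes values in $[0,\|x\|^2]\subset\RR$ on the dense set $Q$, hence takes real values in $[0,\|x\|^2]$ for \emph{every} $x\in\CC^d$. A matrix whose quadratic form is real on all of $\CC^d$ is self-adjoint (over $\CC$, $\langle(D_M(t)-D_M(t)^*)x,x\rangle=0$ for all $x$ forces $D_M(t)=D_M(t)^*$), so $D_M(t)$ is self-adjoint for $t\notin N$; and then $\langle D_M(t)x,x\rangle\ge 0$ and $\langle(I-D_M(t))x,x\rangle\ge 0$ for all $x$ say exactly that $0\le D_M(t)\le I$ in the matrix order, for all $t\notin N$. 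This proves the proposition.

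There is no deep obstacle here; the only point needing care is that the scalar argument produces an exceptional null set depending a priori on $x$, which the countable-dense-plus-continuity step resolves. (Self-adjointness alone could instead be obtained directly from $M_{ij}=\overline{M_{ji}}$ of Proposition~\ref{prop:III:2} and uniqueness of Radon--Nikodym densities, giving $d_{M,i,j}=\overline{d_{M,j,i}}$ $\tr_M$-a.e.; but the positivity bounds $0\le D_M\le I$ genuinely require testing against all vectors, so the density argument is needed in any case.)
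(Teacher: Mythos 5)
Your proposal is correct and follows essentially the same route as the paper: integrate the quadratic form $\langle D_M(\cdot)x,x\rangle$ over arbitrary $\omega\in\frakM$ to get the two one-sided bounds from $0\le M(\omega)\le \tr_M(\omega)I$, extract a $\tr_M$-null exceptional set for each fixed $x$, and then pass from a countable dense set of test vectors to all of $\CC^d$ by continuity, with self-adjointness falling out of the real-valuedness of the quadratic form. The only (harmless) difference is that you justify the identity $\int_\omega\langle D_M x,x\rangle\ud\tr_M=\langle M(\omega)x,x\rangle$ by expanding entrywise rather than by invoking the paper's Fact~\ref{fact:III:1.6}, and you spell out the treatment of the imaginary part explicitly.
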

\begin{proof}
We shall use here the following general property of matrix (finite dimensional vector) valued function integrals:
\begin{fact} \label{fact:III:1.6}
If $\mu$ is a measure in $(\Omega, \frakM)$, $\omega \in \Omega$, $\varphi: \CC^k \to \CC^\ell$ is linear and $f : \Omega \to \CC^k$ is integrable with respect to $\mu$, then $\varphi \circ f : \Omega \to \CC^\ell$ is also integrable and
\[
	\varphi \bigg( \int_{\omega} f \ud \mu \bigg) =
	\int_\omega \varphi \circ f \ud \mu.
\]
\end{fact}

Now let $\omega \in \frakM$ and $x \in \CC^d$. By \eqref{eq:III:1.5} and Fact~\ref{fact:III:1.6} used to $\varphi: \Mat{d}{\CC} \to \CC$, $\varphi(A) = \langle A x, x \rangle_{\CC^d}$ we have
\[
	0 \leq \langle M(\omega) x, x \rangle_{\CC^d} = 
	\bigg\langle \bigg( \int_\omega D_M \ud \tr_M \bigg) x, x \bigg\rangle_{\CC^d} =
	\int_\omega \langle D_M(t) x, x \rangle_{\CC^d} \ud \tr_M(t).
\]
Since $\omega$ was arbitrary, we get
\begin{equation}
	\label{eq:III:1.7}
	\forall_{x \in \CC^d} \text{ for } \tr_M \text{ a.e. } t \in \Omega \quad
	\langle D_M(t) x, x \rangle_{\CC^d} \geq 0.
\end{equation}
Similarly, by \eqref{eq:III:1.4'}
\begin{align*}
	0 
	\leq 
	\big\langle \big( \tr_M(\omega) - M(\omega) \big) x, x \big\rangle_{\CC^d} &=
	\bigg\langle \bigg(\int_\omega I \ud \tr_M \bigg) x, x \bigg\rangle_{\CC^d} -
	\bigg\langle \bigg(\int_\omega D_M \ud \tr_M \bigg) x, x \bigg\rangle_{\CC^d} \\
	&=
	\int_\omega \big\langle \big( I - D_M(t) \big) x, x \big\rangle_{\CC^d} \ud \tr_M(t)
\end{align*}
and
\begin{equation}
	\label{eq:III:1.8}
	\forall_{x \in \CC^d} \text{ for } \tr_M \text{ a.e. } t \in \Omega \quad
	\big\langle \big(I -D_M(t) \big) x, x \big\rangle_{\CC^d} \geq 0.
\end{equation}
Let now $C$ be a countable dense subset of $\CC^d$. By \eqref{eq:III:1.7}, \eqref{eq:III:1.8} and the countable additivity of $\tr_M$ we can choose a set $\omega_0 \in \frakM$ such that $\tr_M(\omega_0) = 0$ and
\[
	\forall_{t \in \Omega \setminus \omega_0}
	\forall_{x \in C} \
	\Big(
	\big\langle D_M(t) x, x \big\rangle_{\CC^d} \geq 0 \text{ and }
	\big\langle \big( I - D_M(t) \big) x, x \big\rangle_{\CC^d} \geq 0
	\Big).
\]
Hence, by the continuity of $D_M(t) : \CC^d \to \CC^d$ and the joint continuity of the scalar product $\langle \cdot, \cdot \rangle_{\CC^d}$ (used for "each fixed $t$") we get
\[
	\forall_{t \in \Omega \setminus \omega_0}
	\forall_{x \in \CC^d} \
	\Big(
	\big\langle D_M(t) x, x \big\rangle_{\CC^d} \geq 0 \text{ and }
	\big\langle \big( I - D_M(t) \big) x, x \big\rangle_{\CC^d} \geq 0
	\Big),
\]
so 
\[
	\forall_{t \in \Omega \setminus \omega_0} \ 0 \leq D_M(t) \leq I
\]
(in particular, for $t \in \Omega \setminus \omega_0$ we get the self-adjointness, since we consider here complex space\footnote{\label{fn:III:3} Recall that in complex Hilbert space $\calH$ for $A \in \calB(\calH)$
\[
	\big( \forall_{x \in \calH} \ \langle A x, x \rangle_{\calH} \in \RR \big) \Rightarrow A \text{ is s.a. (self-adjoint)}.
\]} $\CC^d$).
\end{proof}

\begin{remark} \label{rem:III:6}
Using polarization formula for the sesquilinear, conjugate-symmetric form $\CC^d \times \CC^d \ni (x,y) \mapsto \langle Ax, y \rangle_{\CC^d}$, where $A \in \Mat{d}{\CC}$ and $0 \leq A \leq I$ we easily get $\forall_{i,j=1,\ldots,d} \ |\langle A e_i, e_j \rangle| \leq 2$\footnote{But one can get even more, using   Cauchy-Schwarz inequality. Namely, the best  bound equal $1$.}.). Hence, all the terms $(D_M)_{ij}$, $i,j=1,\ldots,d$ of the trance density $D_M$ of a matrix measure $M$ are not only $\tr_M$ integrable functions, but they are also bounded by $2$, i.e. they satisfy:
\begin{equation}
	\label{eq:III:1.9}
	\forall_{i,j=1,\ldots,d} \ \big| \big( D_M(t) \big)_{ij} \big| \leq 2 \quad
	\text{for } \tr_M \text{ a.e. } t \in \Omega.
\end{equation}
\end{remark}

The formula \eqref{eq:III:1.5} says in particular that each matrix measure on $\frakM$ has the form
\begin{equation}
	\label{eq:III:1.10}
	M = F \ud \mu
\end{equation}
for a measure $\mu$ on $\frakM$ and a $d \times d$-matrix valued, non-negative $\calL^1(\mu)$ function. Recall, that the above "$\ud \mu$" notation is defined in such a general case just by:
\begin{equation}
	\label{eq:III:1.10'}
	\forall_{\omega \in \frakM} \quad M(\omega) = \int_\omega F \ud \mu.
\end{equation}
So, we can say, that each matrix measure has quite a simple, well-understandable form. 

Moreover, we have more information on "our" concrete $F$ and $\mu$, by our previous considerations: $\mu$ is a finite measure and $0 \leq F(t) \leq I$ for $\mu$-a.e. $t \in \Omega$. 

It is natural to ask, whether the opposite is true, i.e.: is each $M$ of the form \eqref{eq:III:1.10} a matrix measure? -- The answer is simple.

\begin{example}[The general example of a matrix measure] \label{example:III:7}
Let $\mu$ be a measure on $\frakM$ and $F: \Omega \to \Mat{d}{\CC}$ bet a non-negative $\calL^1(\mu)$-matrix valued function. Then $M : \frakM \to \Mat{d}{\CC}$ given by \eqref{eq:III:1.10} is a matrix measure.

Moreover, for such a matrix measure $M$, we have
\begin{align}
	\label{eq:III:1.11}
	&\tr_M = \tr F \ud \mu, \\
	\label{eq:III:1.12}
	&\tr_M \big( \{ t \in \Omega : \tr F(t) = 0 \big) = 0
\end{align}
and for each $D_M \in \DD_M$
\begin{equation}
	\label{eq:III:1.13}
	M(t) = \frac{1}{\tr F(t)} \cdot F(t) \quad \text{for } \tr_M \text{-a.e. } t \in \{ s \in \Omega : \tr F(s) \neq 0 \}.
\end{equation}
\end{example}
\begin{proof}
Non-negativity is a simple calculation based on Fact~\ref{fact:III:1.6} for $\varphi$ as in Proof of Proposition~\ref{prop:III:5}. And c.add. is a standard measure theory argument. Similarly \eqref{eq:III:1.11} follows from Fact~\ref{fact:III:1.6} for $\tr$-functional and hence, \eqref{eq:III:1.12} and \eqref{eq:III:1.13} are obvious.
\end{proof}

For a more concrete example see Example~\ref{example:III:15}.

We end this part of Section~\ref{sec:III} by the following observation concerning sets of zero $M$-matrix measure.

\begin{remark} \label{rem:III:8}
If $M$ is a matrix measure on $\frakM$ and $\omega \in \frakM$, then "TFCAE\footnote{\label{fn:III:4} The following conditions are equivalent}"
\begin{enumerate}[(i)]
\item $M(\omega) = 0$;
\item $\tr_M(\omega) = 0$;
\item $D_M(t) = 0$ for $\tr_M$ a.e $t \in \omega$ i.e. $\tr_M \big( \{ t \in \omega : D_M(t) \neq 0 \big) = 0$.
\end{enumerate}
Moreover, $\tr_M \big( \{ t \in \Omega : D_M(t) = 0 \} \big) = 0$.
\end{remark}
\begin{proof}
\fbox{(ii) $\Rightarrow$ (i)} is already known by Proposition~\ref{prop:III:3} (the abs. cont. of $M$ w.r. to $\tr_M$).

\fbox{(i) $\Rightarrow$ (ii)} -- obviously: $\tr_M(\omega) = \tr \big( M(\omega) \big) = \tr 0 = 0$, if (i) holds.

\fbox{(ii) $\Rightarrow$ (iii)} -- obvious, since $\tilde{\omega} := \{ t \in \omega : D_M(t) \neq 0 \} \subset \omega$.

\fbox{(iii) $\Rightarrow$ (i)}
\[
	M(\omega) = 
	\int_\omega D_M \ud \tr_M = 
	\int_{\tilde{\omega}} D_M \ud \tr_M +
	\int_{\omega \setminus \tilde{\omega}} 0 \ud \tr_M = 0+0 = 0.
\]
Now for $\Omega' := \{t \in \Omega : \tr_M(\Omega') = 0 \}$ we get $M(\Omega') = \int_{\Omega'} = \int_{\Omega'} D_M \ud \tr_M = 0$, so
$\tr_M(\Omega') = 0$ by (i) $\Leftrightarrow$ (ii).
\end{proof}

The equivalence "(i) $\Leftrightarrow$ (ii)" is very important. It says that we managed to get much more than we wanted to get, and we got in Proposition~\ref{prop:III:3} -- the absolute continuity of $M$ with respect to $\tr_M$. Here we got in some sense also the opposite: "the absolute continuity of $\tr_M$ with respect to $M$". It shows, that the choice of $\tr_M$ as a measure which "contains" possibly large information on our matrix measure $M$ was very good and precise.

\vspace{5ex}

\subsection{$\CC^d$-vector functions space $\calL^2(M)$, semi-norm, "semi-scalar" product in $\calL^2(M)$ and the zero-layer $\calL^2_0(M)$} \label{sec:III:2}
When $\mu$ is a measure on a $\sigma$-algebra $\frakM$ id a set $\Omega$, then the construction of the space $L^2(\mu)$ of the appropriate classes of scalar functions starts from defining of the function space $\calL^2(\mu)$ with the standard semi-norm $\tnorm{\cdot}_{\mu}$ and with the "semi-scalar product" $\dScalP{\cdot, \cdot}_{\mu}$, given by
\[
	\dScalP{f,g}_{\mu} = \int_{\Omega} f \overline{g} \ud \mu, \quad
	\tnorm{f}_\mu = \dScalP{f,f}_{\mu}^{1/2}.
\]
Then we define $\calL_0^2(\mu) := \{ f \in \calL^2(\mu) : \tnorm{f}_{\mu} = 0 \}$, to use the construction of quotient space $\calL^2(\mu)/\calL^2_0(\mu) =: L^2(\mu)$ which leads to a norm space with the norm $\|\cdot\|_{\mu}$ induced by the scalar product $\langle \cdot, \cdot \rangle_{\mu}$, given by
\[
	\langle [f], [g] \rangle_{\mu} := \dScalP{f,g}_{\mu}, \quad
	f,g \in \calL^2(\mu).
\]
Moreover, $L^2(\mu)$ is a Hilbert space.

Our main goal in Section~\ref{sec:III} is to "perform" the analog construction of the space $L^2(M)$ for a matrix measure $M$ instead of a measure $\mu$. And here, in subsection~\ref{sec:III:2} we shall start with the function space $\calL^2(M)$ consisting of some $\CC^d$-functions $f : \Omega \to \CC^d$ (here $M$ is a $d \times d$-matrix measure on $\frakM$). 

So, let us first define $\dScalP{f}_M \in [0, \infty]$ for each such a $\frakM$-measurable $f$ by the formula:
\begin{equation}
	\label{eq:III:2.1}
	\dScalP{f}_M := 
	\int_{\Omega} \langle D_M(t) f(t), f(t) \rangle_{\CC^d} \ud \tr_M(t).
\end{equation}
Note, that the integral above exists (but can be $+\infty$), since $f$ is measurable and $D_M(t) \geq 0$ for $\tr_M$ --a.e. $t \in \Omega$, and the choice of $D_M \in \DD_M$ does not matter here (and below...). Next we denote/define:
\begin{itemize}
\item $\calL^2(M) := \{ f : \Omega \to \CC^d : f \text{ is } \frakM-\text{measurable, } \dScalP{f}_{M} < \infty \}$;

\item $\tnorm{\cdot} : \calL^2(M) \to [0, \infty)$,
\begin{equation}
	\label{eq:III:2.2}
	\tnorm{f}_M := \dScalP{f}_M^{1/2}, \quad
	f \in \calL^2(M);
\end{equation}

\item $\dScalP{\cdot, \cdot}: \calL^2(M) \times \calL^2(M) \to \CC$,
\begin{equation}
	\label{eq:III:2.3}
	\dScalP{f,g}_M := \int_\Omega \langle D_M(t) f(t), f(t) \rangle_{\CC^d} \ud \tr_M(t), \quad
	f,g \in \calL^2,
\end{equation}
\end{itemize}
but to check, that the RHS of \eqref{eq:III:2.3} is properly defined fix $f,g \in \calL^2(M)$ and consider $\gamma_{f,g} : \Omega \to \CC$ given by
\begin{equation}
	\label{eq:III:2.4}
	\gamma_{f,g}(t) := \langle D_M(t) f(t), g(t) \rangle_{\CC^d}, \quad
	t \in \Omega.
\end{equation}
We need
\begin{lemma} \label{lem:III:9}
If $f,g \in \calL^2(M)$, then $\gamma_{f,g} \in \calL^1(\tr_M)$.
\end{lemma}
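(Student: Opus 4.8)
The plan is a two-fold application of the Cauchy--Schwarz inequality: first pointwise in $\CC^d$ with respect to the non-negative sesquilinear form attached to $D_M(t)$, and then globally with respect to the finite measure $\tr_M$.

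First I would record that $\gamma_{f,g}$ is $\frakM$-measurable: fixing a trace density $D_M \in \DD_M$, the map $t \mapsto D_M(t)$ is $\frakM$-measurable $\Omega \to \Mat{d}{\CC}$, the functions $f,g$ are $\frakM$-measurable $\Omega \to \CC^d$, and $(A,x,y) \mapsto \langle A x, y \rangle_{\CC^d}$ is continuous, so the composition $\gamma_{f,g}$ is measurable. Then comes the pointwise estimate. By Proposition~\ref{prop:III:5}, for $\tr_M$-a.e.\ $t \in \Omega$ the matrix $D_M(t)$ is self-adjoint with $D_M(t) \geq 0$, so $(x,y) \mapsto \langle D_M(t) x, y \rangle_{\CC^d}$ is a conjugate-symmetric non-negative (possibly degenerate) sesquilinear form on $\CC^d$. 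The Cauchy--Schwarz inequality for such a form gives, for $\tr_M$-a.e.\ $t$,
\[
	|\gamma_{f,g}(t)| \leq \langle D_M(t) f(t), f(t) \rangle_{\CC^d}^{1/2} \cdot \langle D_M(t) g(t), g(t) \rangle_{\CC^d}^{1/2}.
\]
(Equivalently, one inserts the positive square root $D_M(t)^{1/2}$ and applies ordinary Cauchy--Schwarz in $\CC^d$; only this pointwise inequality is used, so no measurable selection of square roots is needed.)

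Finally I would integrate this bound and apply the Cauchy--Schwarz inequality for the finite measure $\tr_M$ to the two non-negative functions $t \mapsto \langle D_M(t) f(t), f(t) \rangle_{\CC^d}^{1/2}$ and $t \mapsto \langle D_M(t) g(t), g(t) \rangle_{\CC^d}^{1/2}$, obtaining
\[
	\int_\Omega |\gamma_{f,g}| \ud \tr_M \leq \left( \int_\Omega \langle D_M f, f \rangle_{\CC^d} \ud \tr_M \right)^{1/2} \left( \int_\Omega \langle D_M g, g \rangle_{\CC^d} \ud \tr_M \right)^{1/2} = \tnorm{f}_M \, \tnorm{g}_M < \infty,
\]
since $f,g \in \calL^2(M)$ forces $\dScalP{f}_M, \dScalP{g}_M < \infty$. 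Hence $\gamma_{f,g} \in \calL^1(\tr_M)$. There is no genuine obstacle here; the only steps worth a line of care are the measurability of $\gamma_{f,g}$ and the degenerate case of the pointwise Cauchy--Schwarz bound (when $\langle D_M(t) g(t), g(t) \rangle_{\CC^d} = 0$), which is handled by the standard argument of expanding $0 \leq \langle D_M(t)\big(f(t) + z\, g(t)\big), f(t) + z\, g(t) \rangle_{\CC^d}$ and choosing the phase and modulus of $z \in \CC$ suitably.
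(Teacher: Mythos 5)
Your argument is correct and is essentially the paper's own proof: a pointwise Cauchy--Schwarz estimate $|\gamma_{f,g}(t)| \leq \langle D_M(t) f(t), f(t) \rangle^{1/2} \langle D_M(t) g(t), g(t) \rangle^{1/2}$ (obtained in the paper by inserting $\sqrt{D_M(t)}$), followed by the integral Schwarz inequality with respect to $\tr_M$. The extra remarks on measurability and on the degenerate case are harmless additions that do not change the route.
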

\begin{proof}
We can assume, that $D_M(t) \geq 0$ for any $t \in \Omega$. So, for $t \in \Omega$, using the $\sqrt{\cdot}$ of non-negative matrix, we get
\begin{align}
	\nonumber
	|\gamma_{f,g}(t)| 
	&= 
	\big| \big\langle \sqrt{D_M(t)} f(t), \sqrt{D_M(t)} g(t) \big\rangle_{\CC^d} \big| \\
	\nonumber
	&\leq
	\Big( \big\langle \sqrt{D_M(t)} f(t), \sqrt{D_M(t)} f(t) \big\rangle_{\CC^d} \Big)^{1/2} 
	\cdot
	\Big( \big\langle \sqrt{D_M(t)} g(t), \sqrt{D_M(t)} g(t) \big\rangle_{\CC^d} \Big)^{1/2} \\
	\label{eq:III:2.5}
	&=
	\Big( \big\langle D_M(t) f(t), f(t) \big\rangle_{\CC^d} \Big)^{1/2} \cdot
	\Big( \big\langle D_M(t) g(t), g(t) \big\rangle_{\CC^d} \Big)^{1/2}.
\end{align}
Now, using the Schwarz inequality for integrals, by \eqref{eq:III:2.1} we obtain
\[
	\int_\Omega |\gamma_{f,g}(t)| \ud \tr_M(t) \leq \dScalP{f}_M^{1/2} \dScalP{g}_M^{1/2}. \qedhere
\]
\end{proof}

Having all our main objects well-defined, we study their expected properties.
\begin{fact} \label{fact:III:10}
$\calL^2(M)$ is a $\CC$-linear subspace of the space of all $\frakM$-measurable $\CC^d$ functions on $\Omega$ (with the standard linear pointwise operations). Moreover, $\dScalP{\cdot, \cdot}_M$ is a sesquilinear, non-negative and conjugate-symmetric form on $\calL^2$.
\end{fact}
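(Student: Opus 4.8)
The plan is to treat the two assertions in turn, and the only non-formal point — closure of $\calL^2(M)$ under linear combinations — first. Measurability of $\lambda f+g$ (for $f,g$ measurable $\CC^d$-valued and $\lambda\in\CC$) is immediate since the pointwise linear operations preserve $\frakM$-measurability, so the substantive claim is that $\dScalP{\lambda f+g}_M<\infty$ whenever $f,g\in\calL^2(M)$. Here I would fix a representative $D_M\in\DD_M$ with $D_M(t)\geq 0$ for \emph{all} $t\in\Omega$ (possible after discarding a $\tr_M$-null set, by Proposition~\ref{prop:III:5}) and observe that, for each such $t$, the map $x\mapsto\langle D_M(t)x,x\rangle_{\CC^d}^{1/2}$ is a seminorm on $\CC^d$: non-negativity and homogeneity are clear, and the triangle inequality follows from the Cauchy--Schwarz inequality for the non-negative conjugate-symmetric form $(x,y)\mapsto\langle D_M(t)x,y\rangle_{\CC^d}$ — precisely the pointwise estimate \eqref{eq:III:2.5} already used in the proof of Lemma~\ref{lem:III:9}. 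Consequently, pointwise in $t$,
\[
	\langle D_M(t)(\lambda f(t)+g(t)),\lambda f(t)+g(t)\rangle_{\CC^d}^{1/2}
	\leq |\lambda|\,\langle D_M(t)f(t),f(t)\rangle_{\CC^d}^{1/2}+\langle D_M(t)g(t),g(t)\rangle_{\CC^d}^{1/2},
\]
and squaring (using $(a+b)^2\leq 2a^2+2b^2$) and integrating against $\tr_M$ via \eqref{eq:III:2.1} gives $\dScalP{\lambda f+g}_M\leq 2|\lambda|^2\dScalP{f}_M+2\dScalP{g}_M<\infty$. This establishes that $\calL^2(M)$ is a $\CC$-linear subspace.

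For the properties of the form, the key enabling fact is Lemma~\ref{lem:III:9}: for all $f,g\in\calL^2(M)$ the integrand $\gamma_{f,g}$ of \eqref{eq:III:2.3}--\eqref{eq:III:2.4} lies in $\calL^1(\tr_M)$, so $\dScalP{f,g}_M$ is a well-defined complex number and, crucially, the Lebesgue integral is additive on the functions in play. Sesquilinearity then reduces to the pointwise algebraic identity that, for each $t$, $(x,y)\mapsto\langle D_M(t)x,y\rangle_{\CC^d}$ is linear in $x$ and conjugate-linear in $y$; hence $\gamma_{\lambda f_1+f_2,\,g}=\lambda\gamma_{f_1,g}+\gamma_{f_2,g}$ and $\gamma_{f,\,\lambda g_1+g_2}=\bar\lambda\gamma_{f,g_1}+\gamma_{f,g_2}$ pointwise, and integrating yields the same identities for $\dScalP{\cdot,\cdot}_M$. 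Conjugate symmetry comes from the self-adjointness of $D_M(t)$ for $\tr_M$-a.e.\ $t$ (Proposition~\ref{prop:III:5}): $\gamma_{g,f}(t)=\langle D_M(t)g(t),f(t)\rangle_{\CC^d}=\overline{\langle D_M(t)f(t),g(t)\rangle_{\CC^d}}=\overline{\gamma_{f,g}(t)}$ for a.e.\ $t$, so $\dScalP{g,f}_M=\overline{\dScalP{f,g}_M}$. Non-negativity is immediate from the definition, since $\dScalP{f,f}_M=\dScalP{f}_M=\int_\Omega\langle D_M(t)f(t),f(t)\rangle_{\CC^d}\ud\tr_M(t)\geq 0$ because $D_M(t)\geq 0$ for $\tr_M$-a.e.\ $t$.

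I do not expect a genuine obstacle here: the argument is bookkeeping with the linearity of the Lebesgue integral, legitimized throughout by Lemma~\ref{lem:III:9}, together with the elementary linear-algebra facts about $D_M(t)\geq 0$ recorded in Proposition~\ref{prop:III:5}. The only step that is not purely formal is the closure of $\calL^2(M)$ under addition, which genuinely relies on the pointwise Cauchy--Schwarz/triangle inequality for the (possibly degenerate) inner product $\langle D_M(t)\,\cdot\,,\,\cdot\,\rangle_{\CC^d}$; everything else is routine.
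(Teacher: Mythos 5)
Your proposal is correct and follows essentially the same route as the paper: both arguments rest on the pointwise Cauchy--Schwarz estimate \eqref{eq:III:2.5} for the form $\langle D_M(t)\,\cdot\,,\,\cdot\,\rangle_{\CC^d}$, with the remaining properties (sesquilinearity, conjugate symmetry via self-adjointness of $D_M(t)$, non-negativity via $D_M(t)\geq 0$) obtained exactly as you describe. The only cosmetic difference is in the closure-under-addition step: the paper expands $\gamma_{f+g,f+g}$ into the four cross-terms and invokes Lemma~\ref{lem:III:9} for their integrability, whereas you use the pointwise triangle inequality for the seminorm $x\mapsto\langle D_M(t)x,x\rangle_{\CC^d}^{1/2}$ together with $(a+b)^2\leq 2a^2+2b^2$ --- both are valid and amount to the same bookkeeping.
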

\begin{proof}
If $f \in \calL^2(M)$ and $z \in \CC$, then $\dScalP{z f}_M = |z|^2 \dScalP{f}_M$, by \eqref{eq:III:2.1}, so $z d \in \calL^2(M)$. If $f,g \in \calL^2(M)$, then $\dScalP{f+g}_M = \int_\Omega \gamma_{f+g, f+g} \ud \tr_M$ by \eqref{eq:III:2.4}, and we have
\[
	|\gamma_{f+g, f+g}(t)| \leq 
	|\gamma_{f, f}| + |\gamma_{g, g}| + |\gamma_{f, g}| + |\gamma_{g, f}|,
\]
so $\gamma_{f+g, f+g} \in \calL^1(\tr_M)$ by Lemma~\ref{lem:III:9}, i.e. $\dScalP{f+g}_M < \infty$, and $f+g \in \calL^2(M)$. Thus $\calL^2(M)$ is a $\CC$-linear subspace of the measurable $\CC^d$ functions space. 

Non-negativity is obvious from $D_M(t) \geq 0$ for $\tr_M$ a.e. $t$. Sesquilinearity and conjugate-symmetry is obvious by the same properties of $\langle \cdot, \cdot \rangle$ and by the self-adjointness of $D_M(t)$ for $\tr_M$ a.e. $t \in \Omega$.
\end{proof}

Note that (obviously ...), for any $f \in \calL^2(M)$
\begin{equation}
	\label{eq:III:2.5'}
	\dScalP{f}_{M} = \dScalP{f,f}_{M}
\end{equation}
which explains that the ambiguity of the notation $\dScalP{\cdot}_{M}$ is not very serious...

We can use now the following abstract result (more or less "well-known" ...).

\begin{proposition}[On the "quotient scalar product space"] \label{prop:III:11}
Let $\calL$ be a linear space over $\CC$ and let $\dScalP{\cdot, \cdot}$ be a sesquilinear, non-negative conjugate-symmetric\footnote{\label{fn:III:5} In fact, non-negativity follows from sesquilinearity + non-negativity (by polarisation formula).} form\footnote{\label{fn:III:6} such a form (ses., non-neg., con-symm.) is also called "semi scalar product".} on $\calL$ and let $\tnorm{\cdot} : \calL \to [0; +\infty)$ be given by
\[
	\tnorm{x} := \dScalP{x,x}^{1/2}, \quad
	x \in \calL.
\]
Then:
\begin{enumerate}[(1)]
\item $\tnorm{\cdot}$ is a seminorm in $\calL$;
\item the "semi-Schwarz" inequality holds:
\[
	|\dScalP{x,y}| \leq \tnorm{x} \cdot \tnorm{y}, \quad
	x,y \in \calL;
\]
\item $\calL_0 := \{ x \in \calL : \tnorm{x} = 0 \} \underset{\lin}{\subset} \calL$;
\item The quotient space $L := \calL/\calL_0$ is a normed scalar product space with the norm $\| \cdot \|$ and the scalar product $\langle \cdot, \cdot \rangle$ defined (and "well-defined") by
\[
	\| [x] \| := \tnorm{x}, \quad
	\langle [x], [y] \rangle := \dScalP{x,y}, \quad
	x,y \in \calL;
\]
\item the norm $\| \cdot \|$ in $L$ is the norm induced by $\langle \cdot, \cdot \rangle$ in $L$, i.e. $\| [x] \|^2 = \langle [x], [x] \rangle$ for any $[x] \in L$.
\end{enumerate}
\end{proposition}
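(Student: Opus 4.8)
The plan is to verify the five items essentially in the stated order, since each one feeds into the next. First I would establish the ``semi-Schwarz'' inequality (2), because both the seminorm property (1) and the well-definedness in (4) rest on it. The standard trick works verbatim: for $x,y \in \calL$ and $t \in \RR$, expand $0 \leq \dScalP{x + ty, x+ty} = \tnorm{x}^2 + 2t\,\mathrm{Re}\,\dScalP{x,y} + t^2 \tnorm{y}^2$; treating this as a non-negative quadratic in $t$ (or, if $\tnorm{y}=0$, taking $t \to \pm\infty$ to force $\mathrm{Re}\,\dScalP{x,y}=0$) gives $|\mathrm{Re}\,\dScalP{x,y}| \leq \tnorm{x}\tnorm{y}$, and then replacing $x$ by $e^{i\theta}x$ with $\theta = \arg\dScalP{x,y}$ upgrades this to the full modulus. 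I would remark that conjugate-symmetry is what makes $\dScalP{x+ty,x+ty}$ real for real $t$, so the argument uses exactly the hypotheses available.

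Next, (1): positive homogeneity $\tnorm{zx} = |z|\,\tnorm{x}$ is immediate from sesquilinearity, and the triangle inequality follows from (2) by the usual computation $\tnorm{x+y}^2 = \tnorm{x}^2 + 2\mathrm{Re}\,\dScalP{x,y} + \tnorm{y}^2 \leq (\tnorm{x}+\tnorm{y})^2$. For (3), $\calL_0$ is closed under scalar multiplication trivially and under addition by the triangle inequality ($\tnorm{x},\tnorm{y}=0 \Rightarrow \tnorm{x+y}=0$), so it is a linear subspace and the quotient $L := \calL/\calL_0$ makes sense. For (4) the only real content is well-definedness: if $[x]=[x']$ and $[y]=[y']$, write $x' = x + u$, $y' = y + v$ with $u,v \in \calL_0$; then by sesquilinearity $\dScalP{x',y'} = \dScalP{x,y} + \dScalP{u,y} + \dScalP{x,v} + \dScalP{u,v}$, and each of the last three terms vanishes because $|\dScalP{u,y}| \leq \tnorm{u}\tnorm{y} = 0$ etc., by (2). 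Hence $\langle[x],[y]\rangle := \dScalP{x,y}$ is unambiguous, and similarly $\|[x]\| := \tnorm{x}$ is unambiguous (indeed $\tnorm{x'} = \tnorm{x}$ since $\big|\tnorm{x'}-\tnorm{x}\big| \leq \tnorm{u}=0$). That $\langle\cdot,\cdot\rangle$ inherits sesquilinearity and conjugate-symmetry from $\dScalP{\cdot,\cdot}$ is automatic; positive-definiteness on $L$ is exactly the statement that $\|[x]\|=0 \Leftrightarrow \tnorm{x}=0 \Leftrightarrow x \in \calL_0 \Leftrightarrow [x]=0$. Finally (5) is a tautology, $\|[x]\|^2 = \tnorm{x}^2 = \dScalP{x,x} = \langle[x],[x]\rangle$, recorded only for emphasis.

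There is no genuine obstacle here — this is the textbook passage from a semi-inner-product space to its associated inner-product space — so the ``main difficulty'' is purely expository: making sure the Cauchy--Schwarz step is phrased so it correctly handles the degenerate case $\tnorm{y}=0$ (where the quadratic in $t$ degenerates to linear), and being careful that every appeal to ``well-defined'' is discharged by an explicit use of part (2) rather than left implicit. I would present (2) first precisely so that (1), (3) and (4) can all cite it without circularity.
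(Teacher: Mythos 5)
Your proof is correct and complete; note that the paper itself gives no argument here --- it explicitly skips the ``$\pm$ standard proof'' as an exercise --- and what you have written is exactly the standard passage from a semi-inner-product space to its quotient inner-product space that the author has in mind, including the right care about the degenerate case $\tnorm{y}=0$ in the Cauchy--Schwarz step and the explicit use of (2) to discharge well-definedness in (4).
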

We skip the $\pm$ standard proof and leave it as an exercise.

This result is just what we need, to construct the space $L^2(M)$, which we shall describe in the next subsection in more details. But by (1) we see in particular, that $\tnorm{\cdot}_M$ induced in \eqref{eq:III:2.2} is a seminorm in $\calL^2(M)$. Following the notation of (3) we define:
\begin{equation}
	\label{eq:III:2.6}
	\calL_0^2(M) := \{ f \in \calL^2(M) : \tnorm{f}_M = 0 \}.
\end{equation}
Below we characterize this subspace in terms of the trace measure and the trace density of $M$.

\begin{fact}["On the zero layer"] \label{fact:III:12}
Let $f : \Omega \to \CC^d$ be $\frakM$-measurable. TFCAE:
\begin{enumerate}[(a)]
\item $f \in \calL^2_0(M)$,
\item[(a')] $\dScalP{f}_M = 0$,
\item $f(t) \in \Ker D_M(t)$ for $\tr_M$-a.e. $t \in \Omega$.
\end{enumerate}
\end{fact}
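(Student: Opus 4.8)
The statement to prove is Fact~\ref{fact:III:12}: for $\frakM$-measurable $f:\Omega\to\CC^d$, the conditions (a) $f\in\calL^2_0(M)$, (a$'$) $\dScalP{f}_M=0$, and (b) $f(t)\in\Ker D_M(t)$ for $\tr_M$-a.e.\ $t$ are equivalent. The plan is to prove the cycle (a) $\Leftrightarrow$ (a$'$) $\Rightarrow$ (b) $\Rightarrow$ (a$'$), which closes the loop since (a) $\Leftrightarrow$ (a$'$) is immediate.

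First, \fbox{(a) $\Leftrightarrow$ (a$'$)}: by the definition \eqref{eq:III:2.6} of $\calL^2_0(M)$, membership means $\tnorm{f}_M=0$, and since $\tnorm{f}_M=\dScalP{f}_M^{1/2}$ by \eqref{eq:III:2.2}, this is literally the same as $\dScalP{f}_M=0$ --- but one should note here that (a) as written presupposes $f\in\calL^2(M)$, whereas (a$'$) makes sense for any measurable $f$; the point is that if $\dScalP{f}_M=0<\infty$ then automatically $f\in\calL^2(M)$, so the two are genuinely equivalent with no hidden integrability hypothesis. Next, \fbox{(a$'$) $\Rightarrow$ (b)}: by \eqref{eq:III:2.1} we have $\int_\Omega \langle D_M(t)f(t),f(t)\rangle_{\CC^d}\ud\tr_M(t)=0$, and by Proposition~\ref{prop:III:5} (choosing a representative with $D_M(t)\geq 0$ for all $t$) the integrand is a non-negative measurable function; hence it vanishes $\tr_M$-a.e. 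For $\tr_M$-a.e.\ $t$ we then have $\langle D_M(t)f(t),f(t)\rangle_{\CC^d}=0$ with $D_M(t)\geq 0$, and using $\sqrt{D_M(t)}$ this reads $\|\sqrt{D_M(t)}f(t)\|^2=0$, so $\sqrt{D_M(t)}f(t)=0$, and applying $\sqrt{D_M(t)}$ again gives $D_M(t)f(t)=0$, i.e.\ $f(t)\in\Ker D_M(t)$.

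Finally, \fbox{(b) $\Rightarrow$ (a$'$)}: if $f(t)\in\Ker D_M(t)$ for $\tr_M$-a.e.\ $t$, then $\langle D_M(t)f(t),f(t)\rangle_{\CC^d}=0$ for $\tr_M$-a.e.\ $t$, so the integral \eqref{eq:III:2.1} defining $\dScalP{f}_M$ is zero. This direction is essentially trivial once the a.e.\ statements are lined up.

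I do not expect any serious obstacle; this is a routine unwinding of definitions together with the elementary linear-algebra fact that for a non-negative matrix $B$, $\langle Bx,x\rangle=0\iff Bx=0$ (via $B=\sqrt{B}^2$). The only point requiring a little care is the passage from the almost-everywhere vanishing of the integrand to a single $\tr_M$-null set on which (b) fails --- but this is the standard measure-theoretic argument, using that a non-negative function with zero integral is a.e.\ zero, and it was already used in spirit in the proof of Proposition~\ref{prop:III:5}. One should also be mindful, as above, that the formulation of (a) carries an implicit $f\in\calL^2(M)$ that must be recovered in the reverse implications from finiteness of $\dScalP{f}_M$.
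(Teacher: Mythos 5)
Your proposal is correct and follows essentially the same route as the paper's proof: (a) $\Leftrightarrow$ (a$'$) by definition, then the two implications between (a$'$) and (b) via non-negativity of the integrand and the elementary fact that $\langle Bx,x\rangle=0\Leftrightarrow Bx=0$ for $B\geq 0$ (which the paper isolates as Lemma~\ref{lem:III:13} and you inline via $\sqrt{D_M(t)}$). Your added remark that $\dScalP{f}_M=0<\infty$ automatically yields $f\in\calL^2(M)$ is a sensible clarification the paper leaves implicit.
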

We shall need an abstract result to prove this fact.

\begin{lemma} \label{lem:III:13}
Let $A$ be a non-negative bounded operator in a complex Hilbert space $\calH$. Then for $x \in \calH$ TFCAE:
\begin{enumerate}[(i)]
\item $x \in \Ker A$
\item $x \in \Ker \sqrt{A}$
\item $\langle A x, x \rangle = 0$.
\end{enumerate}
\end{lemma}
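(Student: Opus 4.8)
The plan is to prove the three equivalences in a cycle, (i) $\Rightarrow$ (ii) $\Rightarrow$ (iii) $\Rightarrow$ (i), using only elementary Hilbert space facts about the non-negative square root $\sqrt{A}$, which is itself a non-negative bounded operator commuting with $A$ and satisfying $(\sqrt{A})^2 = A$ and $(\sqrt{A})^* = \sqrt{A}$.

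\smallskip
First I would prove \fbox{(i) $\Rightarrow$ (ii)}: if $Ax=0$, then $(\sqrt{A})^2 x = 0$, so $\langle \sqrt{A}x, \sqrt{A}x\rangle = \langle (\sqrt{A})^2 x, x\rangle = \langle Ax,x\rangle = 0$, hence $\sqrt{A}x = 0$, i.e. $x \in \Ker\sqrt{A}$. (Alternatively one can invoke that $\Ker A = (\Ran A)^\perp$ and the analogous identity for $\sqrt{A}$, but the direct square-root computation is shorter.) Next, \fbox{(ii) $\Rightarrow$ (iii)}: if $\sqrt{A}x = 0$ then $\langle Ax, x\rangle = \langle (\sqrt{A})^2 x, x\rangle = \langle \sqrt{A}x, \sqrt{A}x\rangle = 0$. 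Finally, \fbox{(iii) $\Rightarrow$ (i)}: if $\langle Ax,x\rangle = 0$, then by the self-adjointness of $\sqrt{A}$ we get $\|\sqrt{A}x\|^2 = \langle \sqrt{A}x, \sqrt{A}x\rangle = \langle (\sqrt{A})^2 x, x \rangle = \langle Ax, x\rangle = 0$, so $\sqrt{A}x = 0$, and therefore $Ax = \sqrt{A}(\sqrt{A}x) = 0$, giving $x \in \Ker A$. This closes the cycle.

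\smallskip
I do not anticipate a genuine obstacle here; the only point that requires a small amount of care is the very existence and the basic properties of $\sqrt{A}$ for a non-negative bounded operator on a complex Hilbert space — existence of a unique non-negative square root, its self-adjointness, and $(\sqrt{A})^2 = A$ — which I would simply cite from the functional calculus (the same STh+FCTh references used throughout, or any standard text). Everything else is a two-line manipulation with inner products. If one prefers to avoid the square root entirely, the equivalence (i) $\Leftrightarrow$ (iii) can also be obtained from the generalized Cauchy--Schwarz inequality $|\langle Ax, y\rangle|^2 \leq \langle Ax,x\rangle\langle Ay,y\rangle$ for non-negative $A$: from $\langle Ax,x\rangle = 0$ this forces $\langle Ax,y\rangle = 0$ for all $y$, hence $Ax = 0$; but since the square root is already in play in Lemma~\ref{lem:III:9} and in Fact~\ref{fact:III:12}, routing the proof through $\sqrt{A}$ keeps the section uniform.
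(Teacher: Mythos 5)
Your proof is correct and follows exactly the route the paper intends: the paper leaves this lemma as an exercise with the hint $\langle Ax, x\rangle = \|\sqrt{A}x\|^2$, which is precisely the identity driving your cycle of implications.
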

\begin{proof}
Exercise. Hint: $\langle A x, x \rangle = \big\| \sqrt{A} x \big\|^2$.
\end{proof}

\begin{proof}[Proof of Fact~\ref{fact:III:12}]
\fbox{(a) $\Leftrightarrow$ (b)} -- is obvious by the definition of $\calL^2(M)$ and by \eqref{eq:III:2.2}. Also \fbox{(b) $\Rightarrow$ (a')} is simple since denoting $\Omega_0 = \{ t \in \Omega : f(t) \in \Ker D_M(t) \}$ we get
\[
	\dScalP{f}_M = 
	\int_{\Omega} \big\langle D_M(t) f(t), f(t) \big\rangle_{\CC^d} \ud \tr_M(t) =
	\int_{\Omega \setminus \Omega_0} \big\langle D_M(t) f(t), f(t) \big\rangle_{\CC^d} \ud \tr_M(t)
\]
which equals to $0$ if $\tr_M(\Omega \setminus \Omega_0) = 0$ (i.e. b)) holds.

To get \fbox{(a') $\Rightarrow$ (b)} assume (a'), we have
\[
	0 = \dScalP{f}_M =
	\int_{\Omega} \langle D_M(t) f(t), f(t) \rangle_{\CC^d} \ud \tr_M(t)
\]
but the function $\Omega \ni t \mapsto \langle D_M(t) f(t), f(t) \rangle_{\CC^d}$ is non-negative for $\tr_M$ a.e. $t \in \Omega$, so when the integral is $0$, then also $\langle D_M(t) f(t), f(t) \rangle_{\CC^d} = 0$ for $\tr_M$ a.e. $t$. Now, using Lemma~\ref{lem:III:13} for $A=D_M(t)$ we get $f(t) \in \Ker D_M(t)$ for $\tr_M$ a.e. $t$, i.e. (b) holds.
\end{proof}

Let us discuss now "the problem of being in $\calL_0^2(M)$".

The natural question is \emph{whether in $\calL^2(M)$ the characterization for $\calL^2_0(M)$, analogous to the "scalar" case}, i.e., in $\calL^2(\nu)$ for $\calL^2_0(\nu) = \{ f \in \calL^2(\nu) : \int_{\Omega} |f|^2 \ud \nu = 0 \}$, \emph{is true}? Recall, that when $\nu$ is a measure in $\Omega$ (on $\frakM$), then for $f \in \calL^2(\nu)$ we have $f \in \calL^2_0(\nu)$ iff
\begin{equation}
	\label{eq:III:2.7}
	\nu \big( \{ t \in \Omega : f(t) \neq 0 \} \big) = 0.
\end{equation}
So, we could expect, that "the analogous condition" for the matrix measure $\calL^2(M)$-space would be
\begin{equation}
	\label{eq:III:2.7'}
	M \big( \{ t \in \Omega : f(t) \neq 0 \} \big) = 0
\end{equation}
(here $f \in \calL^2(M)$).

"Unfortunately", the answer for such a question is "NOT"!

\begin{remark} \label{rem:III:14}
Let $M$ be a $d \times d$ matrix measure on $\frakM$ and let $f \in \calL^2(M)$. Denote:
\begin{align*}
	\supp_{\calL^2(M)}(f) 
	&:= 
	\{ t \in \Omega : f(t) \notin \Ker D_M(t) \}, \\
	\supp_M(f) 
	&:=
	\{ t \in \Omega : f(t) \neq 0 \text{ and } D_M(t) \neq 0 \}.
\end{align*}
Then:
\begin{enumerate}[(a)]
\item $\supp_{\calL^2}(f) \subset \supp_M(f)$,
\item $f \in \calL^2_0(M) \Leftrightarrow \tr_M \big( \supp_{\calL^2(M)}(f) \big) = 0$,
\item $f$ satisfies \eqref{eq:III:2.7'} $\Leftrightarrow$ $\tr_M \big( \supp_M(f) \big) = 0$,
\item \eqref{eq:III:2.7'} $\Rightarrow f \in \calL_0^2(M)$ but "$\Leftarrow$" generally does not hold, if $d > 1$.
\end{enumerate}
\end{remark}
\begin{proof}
(a) is clear (since $\supset$ holds for the complements), (b) is just Fact~\ref{fact:III:12} and (c) follows from Remark~\ref{rem:III:8}. Hence "$\Rightarrow$" is true. To see, that "$\Leftarrow$" can be untrue, let us see the following example.
\end{proof}

\begin{example} \label{example:III:15}
Let: $\Omega = [-1;1]$, $\frakM = \Bor([-1;1])$, $\mu$ -- the Lebesgue measure "restricted" to $[-1;1]$, $d=2$, $F : \Omega \to \Mat{2}{\CC}$
\[
	F(t) := 
		\begin{bmatrix}
			1 & 0 \\
			0 & 0
		\end{bmatrix} \chi_{[-1; 0]}(t)
		+
		\begin{bmatrix}
			0 & 0 \\
			0 & 1
		\end{bmatrix} \chi_{(0; 1]}(t)
\]
and let $f : \Omega \to \CC^2$
\[
	f(t) := 
		\begin{pmatrix}
			0 \\
			1
		\end{pmatrix} \chi_{[-1; 0]}(t)
		+
		\begin{pmatrix}
			1 \\
			0
		\end{pmatrix} \chi_{(0; 1]}(t).
\]
Consider $M = F \ud \mu$ -- see Example~\ref{example:III:7}. We have $\tr F \equiv 1$ so $\tr_M = \mathds{1} \ud \mu = \mu$ and $D_M = F$. Moreover, 
\[
	\dScalP{f}_M = 
	\frac{1}{2} \int_{[-1;1]} \langle D_M(t) f(t), f(t) \rangle_{\CC^2} \ud t = 0,
\]
so $f \in \calL^2(M)$, and $f \in \calL^2_0(M)$ since $\supp_{\calL^2_M}(f) = \emptyset$. And we have $\supp_M(f) = \Omega$, so $\tr_M \big( \supp_M(f) \big) = \mu \big( \supp_M(f) \big) = \mu(\Omega) = 2$. And
\[
	M \big( \{ t \in \Omega : f(t) \neq 0 \big) = 
	M(\Omega) =
	\int_{[-1,0]} 
		\begin{bmatrix}
			1 & 0 \\
			0 & 0
		\end{bmatrix}
	\ud t + 
	\int_{[0,1]} 
		\begin{bmatrix}
			0 & 0 \\
			0 & 1
		\end{bmatrix}
	\ud t
	=
	\begin{bmatrix}
		1 & 0 \\
		0 & 0
	\end{bmatrix}
	+
	\begin{bmatrix}
		0 & 0 \\
		0 & 1
	\end{bmatrix}
	=
	I.
\]
So, here $\tnorm{f}_M = 0$, but $M \big( \{t \in \Omega : f(t) \neq 0 \} \big) = I$. I.e., some "trivial" elements of $\calL^2(M)$ can be "nontrivial" functions from the point of view of the matrix measure $M$ (or the measure $\tr_M$). Note also, that our matrix measure is not only "some" matrix measure. -- It is a so-called \emph{probabilistic matrix measure}, i.e. it satisfies an extra condition
\begin{equation}
	\label{eq:III:2.8}
	M(\Omega) = I.
\end{equation}
\end{example}

This is one of the reasons, which make the construction of the vectors of $L^2(M)$ from functions of $\calL^2(M)$ much more delicate/sophisticated, than the well-known construction of the vectors of $L^2(\nu)$ from functions of $\calL^2(\nu)$.

Mathematicians usually do not formally distinguish between $\calL^2(\nu)$ and $L^2(\nu)$, i.e. they say/write a "function f from $L^2(\nu)$" instead of a "class of functions $[f]$", and everybody is used to it, and understands what is "going on". But, because of the above delicacy (subtlety...) of the elements of $\calL_0(M)$, we \emph{shall not} use the similar informality for $L^2(M)$ space!

\vspace{5ex}

\subsection{The $L^2(M)$ space. Completeness.} \label{sec:III:3}
Let us fix a $d \times d$ matrix measure $M$ on $\frakM$ -- a $\sigma$-algebra for $\Omega$. We follow the construction of scalar product space from a semi scalar one. In subsection~\ref{sec:III:2} the linear space $\calL^2(M)$ with the "semi scalar product" $\dScalP{\cdot, \cdot}_M$ and the related seminorm $\tnorm{_M}$ were defined. We also defined $\calL_0^2(M) := \{ f \in \calL^2(M) : \tnorm{f}_M = 0 \}$. Hence, according to Proposition~\ref{prop:III:11}, defining the new space
\begin{equation}
	\label{eq:III:3.1}
	\begin{split}
		&L^2(M) := \calL^2(M) / \calL^2_0(M) \\
		&\langle \cdot, \cdot \rangle_M : L^2(M) \times L^2(M) \to \CC \\
		&\langle [f], [g] \rangle_M := \dScalP{f,g}_M, \quad 
		[f],[g] \in L^2(M)
	\end{split}
\end{equation}
we obtain a scalar product space with the norm
\begin{equation}
	\label{eq:III:3.2}
	\| [f] \|_M := \tnorm{f}_M = \dScalP{f,f}_M^{1/2}, \quad
	[f] \in L^2(M).
\end{equation}
Recall, that by the definition of the quotient linear space, here, for any $f \in \calL^2(M)$
\begin{equation}
	\label{eq:III:3.3}
	[f] := \big\{ f+g : h \in \calL^2_0(M) \big\},
\end{equation}
and\footnote{\label{fn:III:7} In particular, the operation "$[]$" depends on the choice of $M$, similarly as in the case of $L^2(\mu)$ spaces, but we omit $M$ (or $\mu$) in the notation when it does not lead to any misunderstandings.} the linear operations on the classes -- the vectors of $L^2(M)$ -- are as follows: $[f] + [g] := [f+g]$ and $z \cdot [f] := [z f]$ for $[f], [g] \in L^2(M)$.

Our main goal now is to prove that in fact $L^2(M)$ is a Hilbert space.

Our proof is based on a convenient transformation of elements of $\calL^2(M)$, using the following simple, but crucial
\begin{observation} \label{obs:III:3}
If $f \Omega \to \CC^d$ is measurable, then (see \eqref{eq:III:2.1}) if $f \in \calL^2(M)$, then
\begin{align}
	\label{eq:III:3.3.1}
	\tnorm{f}_M^2 = \dScalP{f}_M 
	&= 
	\int_{\Omega} \langle D_M(t) f(t), f(t) \rangle_{\CC^d} \ud \tr_M(t) \\
	\label{eq:III:3.3.2}
	&=
	\int_{\Omega} \langle \sqrt{D_M(t)} f(t), \sqrt{D_M(t)} f(t) \rangle_{\CC^d} \ud \tr_M(t) \\
	\label{eq:III:3.3.3}
	&=
	\int_{\Omega} \sum_{j=1}^d \big| \big( \sqrt{D_M(t)} f(t) \big)_j \big|^2 \ud \tr_M(t) \\
	\label{eq:III:3.3.4}
	&=
	\sum_{j=1}^d \dScalP{(\calF f)_j}_{\tr_M},
\end{align}
where $\calF$ denotes now a "new" function, $\calF: \Omega \to \CC^d$, given by
\begin{equation}
	\label{eq:III:3.4}
	(\calF f)(t) := \sqrt{D_M(t)} f(t), \quad
	t \in \Omega
\end{equation}
and for any measure $\mu$ on $\frakM$ and a measurable $g : \Omega \to \CC$ we denote (see also Section~\ref{sec:III:2})
\begin{equation}
	\label{eq:III:3.5}
	\dScalP{g}_\mu := \int_{\Omega} |g|^2 \ud \mu \in [0, \infty].
\end{equation}
\end{observation}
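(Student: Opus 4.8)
The plan is to establish the chain \eqref{eq:III:3.3.1}--\eqref{eq:III:3.3.4} equality by equality; each step is routine once one uses that $D_M(t)$ is non-negative and self-adjoint for $\tr_M$-a.e.\ $t$ (Proposition~\ref{prop:III:5}). First I would replace the chosen representative of $\DD_M$ by one with $D_M(t)\ge 0$ for \emph{every} $t\in\Omega$; this is harmless, since modifying $D_M$ on a $\tr_M$-null set affects neither side of any of the displayed identities. Then each $D_M(t)$ has a unique non-negative square root $\sqrt{D_M(t)}$, which is self-adjoint and satisfies $\sqrt{D_M(t)}^{\,2}=D_M(t)$.

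Equality \eqref{eq:III:3.3.1} is just the definitions \eqref{eq:III:2.1}, \eqref{eq:III:2.2}, \eqref{eq:III:2.5'}. For \eqref{eq:III:3.3.2} I would argue pointwise: for each $t\in\Omega$,
\[
	\langle D_M(t) f(t), f(t)\rangle_{\CC^d}
	=\big\langle \sqrt{D_M(t)}\,\sqrt{D_M(t)}\,f(t),\,f(t)\big\rangle_{\CC^d}
	=\big\langle \sqrt{D_M(t)}\,f(t),\,\sqrt{D_M(t)}\,f(t)\big\rangle_{\CC^d},
\]
moving one factor to the other side of the inner product by self-adjointness of $\sqrt{D_M(t)}$, and then integrating over $\Omega$ with respect to $\tr_M$. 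Equality \eqref{eq:III:3.3.3} is the definition of the Euclidean scalar product written in coordinates, $\langle v,v\rangle_{\CC^d}=\sum_{j=1}^{d}|v_j|^2$, applied to $v=\sqrt{D_M(t)}f(t)=(\calF f)(t)$ (see \eqref{eq:III:3.4}). Finally \eqref{eq:III:3.3.4} follows by interchanging the finite sum with the integral --- legitimate by linearity of the integral (or Tonelli, the integrands being non-negative and measurable) --- together with the notation \eqref{eq:III:3.5}, which identifies $\int_\Omega|(\calF f)_j|^2\ud\tr_M$ with $\dScalP{(\calF f)_j}_{\tr_M}$.

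The only point that is not pure bookkeeping is that $\calF f$ is a genuine $\frakM$-measurable $\CC^d$-valued function, so that the right-hand sides above make sense. This holds because $A\mapsto\sqrt{A}$ is continuous on the set of non-negative matrices in $\Mat{d}{\CC}$ (a closed subset), hence Borel, so $t\mapsto\sqrt{D_M(t)}$ is $\frakM$-measurable, and then $(\calF f)(t)=\sqrt{D_M(t)}f(t)$ is measurable as a coordinatewise polynomial combination of measurable matrix- and vector-valued maps. I would also note that the whole chain is valid in $[0,+\infty]$ for an arbitrary measurable $f:\Omega\to\CC^d$; the hypothesis $f\in\calL^2(M)$ only guarantees that every quantity occurring is finite. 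So there is essentially no obstacle here; the content of the Observation is precisely that it repackages $\tnorm{\cdot}_M$ as a finite sum of ordinary scalar $\calL^2(\tr_M)$-type quantities --- the device that will drive the completeness proof for $L^2(M)$.
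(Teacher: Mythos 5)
Your proposal is correct and follows essentially the same route as the paper: you isolate the same non-trivial point (the $\frakM$-measurability of $\calF f$, needed to make sense of each $\dScalP{(\calF f)_j}_{\tr_M}$ in the step from \eqref{eq:III:3.3.3} to \eqref{eq:III:3.3.4}) and resolve it exactly as the paper does, via the continuity of $A\mapsto\sqrt{A}$ on non-negative matrices (the paper's Lemma~\ref{lem:III:16}, applied with $J=[0,\infty)$), after normalizing $D_M(t)\ge 0$ for every $t$. The remaining equalities are handled by the same routine pointwise and linearity arguments as in the text.
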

Recall that $\calL^2(\mu) = \{ g : \Omega \to \CC : g\text{-measurable and } \dScalP{f}_\mu \in \RR \}$ and $\tnorm{g}_\mu = \dScalP{g}_\mu^{1/2}$ for $g \in \calL^2(\mu)$. Note, that -- as usual -- we choose $D_M \in \DD_M$ such, that $\forall_{t \in \Omega} \ D_M(t) \geq 0$, so $\calF f : \Omega \to \CC^d$ is well (and "everywhere") defined. But this "simple calculation" \eqref{eq:III:3.3.1}--\eqref{eq:III:3.3.4} is somewhat delicate. We should be careful, because the last step \eqref{eq:III:3.3.3}--\eqref{eq:III:3.3.4} \emph{is not obvious}, and need special explanation!

The question is whether we "have right" to claim, that $\dScalP{(\calF f)_j}_{\tr_M}$ have sense for any $j=1,\ldots,d$. And this is not a "quantitative" problem, but rather a "qualitative" one -- the problem of measurability of all the functions $(\calF f)_{j}$, $j=1,\ldots,d$ (i.e. -- the measurability with respect to $\frakM$ -- in $\Omega$ and the Borel $\sigma$-algebra -- in $\CC$) or -- equivalently -- the measurability of $\calF f$ from $\Omega$ into $\CC^d$ (i.e. -- with respect to $\frakM$ -- in $\Omega$ and the Borel $\sigma$-algebra -- in $\CC^d$). Note, that by the measurability of $f$, we can be sure that the lines \eqref{eq:III:3.3.1}--\eqref{eq:III:3.3.3} are OK, because the function given by $\Omega \ni t \mapsto \sum_{j=1}^d \big| \big( \calF f(t) \big)_j \big|^2$ is measurable, which does not "automatically" give the measurability of each term $(\calF f)_j$. But knowing that they all are measurable, we also get the crucial step \eqref{eq:III:3.3.3}--\eqref{eq:III:3.3.4} (-- being this more "quantitative" problem).

So, to get the measurability of $\calF$, we shall use the following general result for bounded self-adjoint operators in a Hilbert space $\calH$ (here we shall use it just for a.s. matrices from $\Mat{d}(\CC)$, i.e. $\calH = \CC^d$).

\begin{lemma} \label{lem:III:16}
Let $\emptyset \neq J \subset$ be an interval (= a convex subset) and $f : J \to \CC$ -- a continuous function. Consider the transformation $\Phi_f : \calB_J(\calH) \to \calB(\calH)$, where $\calB_J(\calH) := \{ A \in \calB(\calH) : A \text{ is s.a and } \sigma(A) \subset J \}$, given by $\Phi_f(A) = f(A)$\footnote{\label{fn:III:8} Note, that $\sigma(A)$ is compact, $\sigma(A) \subset J$, so $\restr{f}{\sigma(A)}$ is a bounded function, hence $f(A) \in \calB(\calH)$.}. Then $\Phi_f$ is continuous in operator-norm topology in $\calB(\calH)$ and in the subspace topology sense in $\calB_J(\calH) \subset \calB(\calH)$.
\end{lemma}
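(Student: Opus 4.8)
\textbf{Plan of proof for Lemma~\ref{lem:III:16}.}
The plan is to reduce the continuity of $\Phi_f$ to the already-understood continuity of polynomial functional calculus, using a Weierstrass/Stone-approximation argument together with a uniform norm bound valid on a suitable neighbourhood. First I would observe that for a \emph{polynomial} $p$, the map $A \mapsto p(A)$ is continuous on all of $\calB(\calH)$: this is immediate, since addition, scalar multiplication and operator multiplication are (jointly) continuous in operator norm, so $A \mapsto A^k$ is continuous for each $k$ and hence so is any finite linear combination. Next I would record the fundamental norm identity coming from the spectral mapping theorem / $C^*$-identity for normal operators: for $A \in \calB_J(\calH)$ and any continuous $g$ on $J$,
\[
	\| g(A) \| = \sup_{\lambda \in \sigma(A)} |g(\lambda)| \leq \sup_{\lambda \in J \cap [-\|A\|,\|A\|]} |g(\lambda)|.
\]
This is the quantitative heart of the argument: it lets one transfer sup-norm smallness of a function on a compact interval to operator-norm smallness of the corresponding operator.

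The main step is then a local argument. Fix $A_0 \in \calB_J(\calH)$ and $\varepsilon > 0$. Since $\sigma(A_0)$ is compact and contained in $J$, choose $R > \|A_0\|$ and restrict attention to the closed ball $B = \{A \in \calB_J(\calH) : \|A - A_0\| \leq 1\}$; all such $A$ satisfy $\sigma(A) \subset [-R',R'] \cap J =: K$ for $R' := \|A_0\| + 1$, a compact interval on which $f$ is continuous, hence uniformly approximable: pick a polynomial $p$ with $\sup_{\lambda \in K} |f(\lambda) - p(\lambda)| < \varepsilon/3$. For $A \in B$ write
\[
	f(A) - f(A_0) = \big( f(A) - p(A) \big) + \big( p(A) - p(A_0) \big) + \big( p(A_0) - f(A_0) \big).
\]
By the norm identity above the first and third terms have norm $< \varepsilon/3$ (since $\sigma(A), \sigma(A_0) \subset K$), and by continuity of $A \mapsto p(A)$ there is $\delta \in (0,1]$ so that $\|A - A_0\| < \delta$ forces $\|p(A) - p(A_0)\| < \varepsilon/3$. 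Adding up, $\|f(A) - f(A_0)\| < \varepsilon$, which is exactly continuity of $\Phi_f$ at $A_0$ relative to the subspace topology on $\calB_J(\calH)$.

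I expect the only genuinely delicate point to be making sure the approximating polynomial can be chosen \emph{uniformly over the neighbourhood} $B$ — this is handled precisely by passing to the fixed compact interval $K$ that contains $\sigma(A)$ for all $A$ near $A_0$, which works because $A \mapsto \|A\|$ is continuous so spectra cannot "escape to infinity" locally. One should also note that $\calB_J(\calH)$ is a legitimate metric subspace of $\calB(\calH)$ (so "subspace topology" makes sense) and that for self-adjoint $A$ with $\sigma(A) \subset J$ the continuous functional calculus agrees with the polynomial one on polynomials and satisfies $\|g(A)\| = \|g\|_{\infty, \sigma(A)}$; both are standard facts from STh+FCTh that may be invoked. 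Everything else is the routine $\varepsilon/3$ bookkeeping above.
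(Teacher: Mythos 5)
The paper itself gives no proof of Lemma~\ref{lem:III:16} (it is left as an exercise, with a proof promised for a later version), so there is nothing to compare against; your polynomial-approximation plus $\varepsilon/3$ scheme is the standard and essentially correct way to prove it. However, there is one genuine gap in your construction of the compact set $K$. You set $K := [-R',R'] \cap J$ with $R' = \|A_0\|+1$ and assert that $K$ is a compact interval. This is true only when the relevant endpoints of $J$ belong to $J$. The lemma allows an arbitrary interval, e.g.\ $J = (0,1)$, in which case $K = (0,1)$ is not compact, $f$ (say $f(x)=1/x$) need not even be bounded on $K$, and the Weierstrass step "pick a polynomial $p$ with $\sup_{\lambda \in K}|f-p| < \varepsilon/3$" is impossible. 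So as written the argument breaks down precisely for non-closed $J$, even though the conclusion is still true there.

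The fix is to localize the spectra more carefully, using the standard perturbation fact for self-adjoint operators: since $\|(A_0-\lambda)^{-1}\| = 1/\mathrm{dist}(\lambda,\sigma(A_0))$ for $\lambda \notin \sigma(A_0)$, a Neumann series gives $\sigma(A) \subset \{\lambda \in \RR : \mathrm{dist}(\lambda,\sigma(A_0)) \le \|A-A_0\|\}$. Because $\sigma(A_0)$ is a compact subset of $J$ and $\overline{J}\setminus J$ consists of at most two points, one has $\mathrm{dist}\big(\sigma(A_0), \overline{J}\setminus J\big) > 0$, so for $\eta>0$ small enough the set $K := \overline{J} \cap \{\lambda : \mathrm{dist}(\lambda,\sigma(A_0)) \le \eta\}$ is a compact subset of $J$ containing $\sigma(A)$ for every $A \in \calB_J(\calH)$ with $\|A-A_0\| \le \eta$. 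With this $K$ in place of yours, the rest of your argument --- continuity of $A \mapsto p(A)$, the identity $\|g(A)\| = \sup_{\sigma(A)}|g|$, and the three-term decomposition --- goes through verbatim and proves the lemma in full generality. (For the paper's actual application, $J = [0,+\infty)$ is closed and your original $K$ already works.)
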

\begin{proof}

We leave this result temporarily as an exercise for the reader, however  the proof is planned in one of the next versions (in Appendix).
\end{proof}

Now, we shall use the lemma to $J := [0; \infty)$ and $f : J \to \RR$, $f(\cdot) = \sqrt{\cdot}$. We know (see the construction of $D_M$ in Section~\ref{sec:III:1}), that $D_M : \Omega \to \Mat{d}{\CC}$ is measurable (in $\frakM-\Bor(\Mat{d}{\CC})$ sense) and $\forall_{t \in \Omega} \ D_M(t) \in \calB_{[0, \infty)}(\CC^d)$. By Lemma~\ref{lem:III:16} $\sqrt{\cdot} : \calB_{[0, \infty)}(\CC^d) \to \Mat{d}{\CC} = \calB(\CC^d)$ is continuous, hence also Borel, so the composition $\sqrt{D_M} : \Omega \to \Mat{d}{\CC}$ is also measurable. Therefore, for fixed measurable $f$ also $\calF f : \Omega \to \CC^d$ given by \eqref{eq:III:3.4} is measurable. Obviously, the measurability of $\calF$ is equivalent to the measurability of each of $(\calF f)_j$, $j=1,\ldots,d$. So, we have proven the following result.

\begin{fact} \label{fact:III:17}
Suppose that $D_M \in \DD_M$ is chose in such a way, that
\begin{equation}
	\label{eq:III:3.6}
	\forall_{t \in \Omega} \ D_M(t) \geq 0.
\end{equation}
Then:
\begin{enumerate}[1)]
\item the matrix valued function $\sqrt{D_M} : \Omega \to \Mat{d}{\CC}$ is measurable
\item If $f : \Omega \to \CC^d$ is measurable, then $\calF f : \Omega \to \CC^d$ is also measurable.
\item If $f : \Omega \to \CC^d$ is measurable, then
\begin{enumerate}[(a)]
\item $f \in \calL^2(M) \Leftrightarrow (\calF f) \in \big( \calL^2(\tr_M) \big)^d$
\item $f \in \calL^2_0(M) \Leftrightarrow (\calF f) \in \big( \calL^2_0(\tr_M) \big)^d$
\item $\dScalP{f}_M = \sum_{j=1}^d \dScalP{f_j}_{\tr_M}$
\end{enumerate}
\end{enumerate}
\end{fact}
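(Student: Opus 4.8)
The plan is to follow the chain of implications already foreshadowed in the paragraph preceding the statement: Lemma~\ref{lem:III:16} yields 1), then 1) yields 2), and 2) is exactly what makes the computation of Observation~\ref{obs:III:3} rigorous, which in turn gives all of 3). First I would prove 1). By the construction of $D_M$ in Section~\ref{sec:III:1} (Radon--Nikodym applied to each $M_{ij}$ relative to $\tr_M$) the function $D_M : \Omega \to \Mat{d}{\CC}$ is $\frakM$--$\Bor(\Mat{d}{\CC})$ measurable, and by the standing assumption \eqref{eq:III:3.6} we have $D_M(t) \geq 0$, hence $\sigma(D_M(t)) \subset [0;\infty) =: J$ for every $t \in \Omega$, i.e. $D_M(t) \in \calB_J(\CC^d)$. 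Applying Lemma~\ref{lem:III:16} with this $J$ and the continuous function $J \ni s \mapsto \sqrt{s}$, the map $\sqrt{\cdot} : \calB_J(\CC^d) \to \calB(\CC^d) = \Mat{d}{\CC}$ is norm-continuous, hence Borel; composing it with the measurable $D_M$ shows $\sqrt{D_M} : \Omega \to \Mat{d}{\CC}$ is measurable, which is 1).

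For 2), observe that $\calF f$ is pointwise the action of the matrix $\sqrt{D_M(t)}$ on the vector $f(t)$, and the evaluation map $\Mat{d}{\CC} \times \CC^d \ni (A,x) \mapsto Ax \in \CC^d$ is bilinear between finite-dimensional spaces, hence continuous, hence Borel. Since $\Mat{d}{\CC} \times \CC^d$ is second countable, a map into it is Borel measurable iff both components are, so $(\sqrt{D_M}, f)$ is measurable by 1) and the hypothesis on $f$; composing with the Borel evaluation map gives that $\calF f$, equivalently each coordinate $(\calF f)_j$, is measurable. This is precisely the point flagged as ``not obvious'' before the statement: it legitimizes the passage \eqref{eq:III:3.3.3}$\to$\eqref{eq:III:3.3.4}.

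For 3), I would now simply carry out the identities \eqref{eq:III:3.3.1}--\eqref{eq:III:3.3.4}. Choosing (as permitted) $D_M$ with $D_M(t) \geq 0$ everywhere, for any measurable $f : \Omega \to \CC^d$ one has, pointwise, $\langle D_M(t) f(t), f(t) \rangle_{\CC^d} = \langle \sqrt{D_M(t)} f(t), \sqrt{D_M(t)} f(t) \rangle_{\CC^d} = \sum_{j=1}^d \lvert (\calF f)_j(t) \rvert^2$, using self-adjointness of $\sqrt{D_M(t)}$ and $(\sqrt{A})^2 = A$ for $A \geq 0$. Integrating this non-negative measurable function against $\tr_M$ and splitting the finite sum of non-negative measurable integrands by linearity of the integral gives $\dScalP{f}_M = \sum_{j=1}^d \dScalP{(\calF f)_j}_{\tr_M}$, which is (c). Part (a) is then immediate: $f \in \calL^2(M)$ means $\dScalP{f}_M < \infty$, and a finite sum of elements of $[0;+\infty]$ is finite iff every summand is, i.e. iff every $(\calF f)_j \in \calL^2(\tr_M)$, i.e. iff $\calF f \in (\calL^2(\tr_M))^d$. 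Part (b) is analogous: $f \in \calL^2_0(M)$ means $\dScalP{f}_M = 0$, and a finite sum of non-negative reals vanishes iff each term does, i.e. iff every $(\calF f)_j \in \calL^2_0(\tr_M)$.

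The only genuine obstacle lies in 1), and it is entirely absorbed by Lemma~\ref{lem:III:16} (norm-continuity of the continuous functional calculus), which is available here; everything after that is routine measure theory together with the finite-dimensional fact that bilinear maps are continuous. One care point to keep in mind throughout: a trace density is determined only up to $\tr_M$-a.e.\ equality, but the hypothesis \eqref{eq:III:3.6} pins down a genuine everywhere-defined representative, so $\sqrt{D_M}$ and $\calF f$ are everywhere defined and the pointwise identities above hold literally rather than merely a.e.; the conclusions (a)--(c), of course, do not depend on which such representative is chosen.
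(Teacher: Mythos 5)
Your proposal is correct and follows essentially the same route as the paper: Lemma~\ref{lem:III:16} applied to $J=[0;+\infty)$ and $\sqrt{\cdot}$ gives continuity, hence Borel measurability, of the matrix square root on $\calB_{[0,\infty)}(\CC^d)$, composition with the measurable $D_M$ gives 1) and then 2), and the pointwise identity behind \eqref{eq:III:3.3.1}--\eqref{eq:III:3.3.4} yields 3)(a)--(c). You merely spell out a few steps the paper leaves implicit (continuity of the bilinear evaluation map, joint measurability via second countability, and the term-by-term finiteness/vanishing of the sum), which is harmless.
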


Note, that by (c), if $f \in \calL^2(M)$, i.e. $[f] \in L^2(M)$, then
\begin{equation}
	\label{eq:III:3.7}
	\| [f] \|_{M} = 
	\tnorm{f}_M = 
	\tnorm{\calF f}_{\tr_M,d} =
	\| \bClassAbs{\calF f} \|_{\tr_M,d},
\end{equation}
where we use the following notation and conventional for any measure $\mu$ on $\frakM$ and for the functions\footnote{\label{fn:III:9} Recall the set theory notation: $X^Y$ is the set of \emph{all} the functions $f : Y \to X$, for sets $X,Y$.} $g \in (\CC^d)^\Omega$ and their classes.
\begin{enumerate}[(i)]
\item We identify, as usual any $g : \Omega \to \CC^d$ with the system $(g_1, \ldots, g_d) \in (\CC^\Omega)^d$ to omit the use of any special notation for some "trivial" mappings.
\item  We denote by $\bClassAbs{g}$\footnote{\label{fn:III:10} Observe that we use bold $\bClassAbs{\cdot}$ instead of the usual one for $L^2(M)$} the class of functions determined by $g$ in $\big( L^2(\mu) \big)^d$ for $g \in \big( \calL^2(\mu) \big)^d$, where similarly as in (i) we identify the Hilbert space product $\big( L^2(\mu) \big)^d$ of $d$-times $L^2(\mu)$ with the quotient space $\big( \calL^2(\mu) \big)^d / \big( \calL^2_0(\mu) \big)^d$ which is a "quotient scalar space" obtained by the construction from Proposition~\ref{prop:III:11} (see (iii) below for the seminorm etc. notation). In particular, $\bClassAbs{g}$ is identified with $([g_1]_\mu, \ldots, [g_d]_\mu)$.\footnote{\label{fn:III:11} It is an easy exercise to check that this identification is in fact an "obvious" unitary transformation of the Hilbert spaces...}
\item Let us denote $\dScalP{g}_{\mu,d} := \sum_{j=1}^d \dScalP{g_j}_\mu \in [0; +\infty]$ for each measurable $g : \Omega \to \CC^d$; for such $g$ we have $g \in \big( \calL^2(\mu) \big)^d$ iff $\dScalP{g}_{\mu,d} \neq +\infty$.

For $g \in \big( \calL^2(\mu) \big)^d$ and $\bClassAbs{g} \in \big( L^2(\mu) \big)^d$ we denote
\begin{equation}
	\label{eq:III:3.8}
	\tnorm{g}_{\mu,d} = 
	\| \bClassAbs{g} \|_{\mu,d} := 
	\dScalP{g}_{\mu,d}^{1/2}
\end{equation}
which finally explains the notation used in \eqref{eq:III:3.7}, with $\mu = \tr_M$. Let us keep here the notation $\mu := \tr_M$ and $D := D_M$ for short.
\end{enumerate}

Finally, after all those "formal-notation" explanations, thanks to the linearity of $f \mapsto \calF f$, we can define our key linear mapping:
\[
	\hat{\calF} : L^2(M) \to \big( L^2(\mu) \big)^d
\]
by the formula
\begin{equation}
	\label{eq:III:3.9}
	\hat{\calF}[f] = \bClassAbs{\calF f}, \quad
	f \in \calL^2(M) \ \big( [f] \in L^2(M) \big).
\end{equation}
We can be sure, by Fact~\ref{fact:III:17}, that it is a properly defined linear transformation and, moreover, it is an isometry by \eqref{eq:III:3.7}! So, since $\big( L^2(\tr_M) \big)^d$ is a Hilbert space (as a product of $d$ Hilbert spaces), we could try to check whether $\hat{\calF}$ is \emph{onto} $\big( L^2(\tr_M) \big)^d$ -- And, if it were true, the proof of the completeness of $L^2(M)$ would be complete ... But, "unfortunately", it is NOT true -- the range $\Ran \hat{\calF}$ is usually somewhat smaller... To understand this better it suffices to observe, that each $\calF f$, for $f \in \calL^2(M)$ satisfies the extra condition\footnote{\label{fn:III:12} There is one "fragility" related to the "$\mu$-a.e $t \in \Omega$" below -- see the end of the proof for explanations...}:
\begin{equation}
	\label{eq:III:3.10}
	\calF f \in \calL^2_D :=
	\big\{
		g \in \big( \calL^2(\mu) \big)^d : g(t) \in \Ran D(t) \text{ for } \mu \text{ a.e. } t \in \Omega
	\big\}.
\end{equation}
To check this, it suffices to recall \eqref{eq:III:3.4} and the fact that for any matrix $A \geq 0$ we have $\Ker \sqrt{A} = \Ker A$ (see e.g. Lemma~\ref{lem:III:13}), which, by s.a. of $\sqrt{A}$ and $A$, gives\footnote{\label{fn:III:13} However this part of argumentation is not very important -- with an equal effect we could put $\Ran \sqrt{D(t)}$ as well, instead of $\Ran D(t)$ in \eqref{eq:III:3.10}.}
\[
	\Ran \sqrt{A} = 
	(\Ker \sqrt{A})^\perp =
	(\Ker A)^\perp =
	\Ran A.
\]
Hence, knowing that it "can happen for many $t$", that $\Ran D(t) \neq \CC^d$, we define a "smaller" space:
\begin{equation}
	\label{eq:III:3.11}
	L^2_D := 
	\big\{ \bClassAbs{g} \in \big( L^2(\mu) \big)^d : g \in \calL^2_D \big\}.
\end{equation}

\begin{fact} \label{fact:III:18}
$L^2_D$ is a closed linear subspace of $L^2(M)$.
\end{fact}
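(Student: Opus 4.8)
The statement places $L^2_D$ inside the Hilbert space $\big(L^2(\mu)\big)^d$ (recall $L^2_D\subset\big(L^2(\mu)\big)^d$), and the plan is to check linearity first and then closedness; combined with the fact, already established, that $\hat{\calF}$ is an isometry, this Fact is what is needed to conclude that $L^2(M)$ is complete. For linearity I would first observe that $\calL^2_D$ is a linear subspace of $\big(\calL^2(\mu)\big)^d$: for each $t\in\Omega$ the set $\Ran D(t)$ is a linear subspace of $\CC^d$, so if $g,g'\in\calL^2_D$ --- say $g(t)\in\Ran D(t)$ off a $\mu$-null set $N$ and $g'(t)\in\Ran D(t)$ off a $\mu$-null set $N'$ --- then $g+g'$ and $zg$ ($z\in\CC$) again lie in $\calL^2_D$ (exceptional sets $N\cup N'$, resp.\ $N$), and $0\in\calL^2_D$. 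The one point worth isolating is that $\calL^2_D$ is \emph{saturated} under $\mu$-a.e.\ equality: if $g\in\calL^2_D$ and $g=g'$ $\mu$-a.e., then $g'\in\calL^2_D$; in particular $\big(\calL^2_0(\mu)\big)^d\subset\calL^2_D$. Consequently $L^2_D=\{\bClassAbs{g}:g\in\calL^2_D\}$ is exactly the image of the linear set $\calL^2_D$ under the quotient map, hence a linear subspace of $\big(L^2(\mu)\big)^d$, and $\bClassAbs{g}\in L^2_D$ iff some (equivalently, every) representative of $\bClassAbs{g}$ lies in $\calL^2_D$. Saturation also shows that $L^2_D$ is independent of the choice of $D\in\DD_M$, since any two choices agree $\mu$-a.e.; this settles the ``fragility'' flagged in the footnote to \eqref{eq:III:3.10}.

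For closedness, suppose $\bClassAbs{g_n}\in L^2_D$ and $\bClassAbs{g_n}\to h$ in $\big(L^2(\mu)\big)^d$; I must exhibit a representative of $h$ that lies in $\calL^2_D$. Fix any representative $g\in\big(\calL^2(\mu)\big)^d$ of $h$. Using the standard fact that $\|\cdot\|_{\mu,d}$-convergence of classes forces $\mu$-a.e.\ convergence of representatives along a subsequence --- extract $(n_k)$ with $\sum_k\tnorm{g_{n_k}-g}_{\mu,d}<\infty$, then use finiteness of $\mu=\tr_M$ and Beppo Levi to get $\sum_k\|g_{n_k}(t)-g(t)\|_{\CC^d}<\infty$ for $\mu$-a.e.\ $t$ --- we may assume $g_{n_k}(t)\to g(t)$ in $\CC^d$ for $t$ outside a $\mu$-null set $N_0$. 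By saturation, choose for each $k$ the representative $g_{n_k}$ so that $g_{n_k}(t)\in\Ran D(t)$ for $t$ outside a $\mu$-null set $N_k$. Put $N:=N_0\cup\bigcup_k N_k$, still a $\mu$-null set. For every $t\notin N$ we have $g_{n_k}(t)\to g(t)$ with all $g_{n_k}(t)\in\Ran D(t)$, and $\Ran D(t)$, being a finite-dimensional subspace of $\CC^d$, is closed; hence $g(t)\in\Ran D(t)$. Thus $g\in\calL^2_D$, so $h=\bClassAbs{g}\in L^2_D$, and $L^2_D$ is closed.

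There is no substantive obstacle here: all the care goes into bookkeeping --- keeping straight the distinction between $\calL^2$-functions and their $L^2$-classes (handled once and for all by the saturation remark), the harmless dependence on $\mu$-null-set choices (of $D_M\in\DD_M$ and of the representatives), and the invocation of ``$L^2$-convergence $\Rightarrow$ a.e.\ convergence along a subsequence'' for $\CC^d$-valued functions. An alternative, slicker but less self-contained route to closedness is to realize $L^2_D$ as the kernel of the contraction $\big(L^2(\mu)\big)^d\ni\bClassAbs{g}\mapsto\bClassAbs{P(\cdot)g(\cdot)}$, where $P(t)$ is the orthogonal projection of $\CC^d$ onto $\Ker D(t)$; this operator is a well-defined bounded operator (pointwise $\|P(t)g(t)\|_{\CC^d}\le\|g(t)\|_{\CC^d}$), $P$ is measurable because $P(t)=\lim_n\big(I-D(t)\big)^n$ (using $0\le D(t)\le I$), and its kernel is exactly $L^2_D$; I would, however, present the elementary subsequence argument above.
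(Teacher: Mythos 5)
Your proof is correct and follows essentially the same route as the paper's: extract a subsequence converging $\mu$-a.e.\ and use that $\Ran D(t)$ is a closed (finite-dimensional) subspace of $\CC^d$, with linearity coming from the pointwise linearity of $\Ran D(t)$. Your additional bookkeeping (the saturation of $\calL^2_D$ under $\mu$-a.e.\ equality, the explicit treatment of representatives and null sets) and the alternative realization of $L^2_D$ as the kernel of a bounded projection-type operator are sound refinements of what the paper leaves implicit.
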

\begin{proof}
Suppose that $g_n \in \calL^2_D$ for each $n \in \NN$ and $g \in \big( \calL^2 \big)^d$, and that $\bClassAbs{g_n} \to \bClassAbs{g}$ in $\big( L^2 \big)^d$. This means that for each $j=1,\ldots,d$ $\tnorm{(g_n)_j - (g)_j}_\mu \to 0$ (see \eqref{eq:III:3.8}). Thus we can use now the well-known result on subsequences of $L^p$-convergent sequences, which are a.e. convergent. We can choose a joint subsequence "on each $j$", i.e. let $\{ k_n \}_{k \in \NN}$ be a strictly increasing sequence from $\NN$ such that
\[
	g_{k_n}(t) \to g(t) \quad
	\text{for } \mu-\text{a.e. } t \in \Omega.
\]
But $\Ran D(t)$ is always a closed set (a linear subspace of $\CC^d$), so $\bigg( \forall_{n \in \NN} \ g_{k_n} \in \calL^2_D \bigg) \Rightarrow g \in \calL^2_D$, hence $\bClassAbs{g} \in L^2_D$. The linearity is obvious by the linearity of $\Ran D(t)$ for each $t$.
\end{proof}

Hence $L^2_D$ is also a Hilbert space, and by \eqref{eq:III:3.9}, \eqref{eq:III:3.10} and \eqref{eq:III:3.11} we have
\[
	\hat{\calF} : L^2(M) \to L^2_D.
\]
Now, to finish our proof of the completeness of $L^2(M)$ it suffices to prove the result below.

\begin{fact} \label{fact:III:19}
$\hat{\calF}$ is a unitary transformation between $L^2(M)$ and $L^2_D$.
\end{fact}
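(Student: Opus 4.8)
The plan is to observe that essentially all the work has already been done, and only surjectivity remains. Indeed, the preceding discussion establishes that $\hat{\calF} : L^2(M) \to L^2_D$ is a well-defined linear map (by Fact~\ref{fact:III:17} together with \eqref{eq:III:3.10}--\eqref{eq:III:3.11}) and an isometry (by \eqref{eq:III:3.7}); since an isometry is automatically injective, and since a surjective linear isometry onto the Hilbert space $L^2_D$ (Fact~\ref{fact:III:18}) is a unitary transformation, it suffices to prove that $\Ran \hat{\calF} = L^2_D$. Keeping the abbreviations $\mu := \tr_M$ and $D := D_M$, with $D(t) \geq 0$ for \emph{every} $t \in \Omega$ as in Fact~\ref{fact:III:17}, this reduces to: given $\bClassAbs{g} \in L^2_D$ with $g \in \calL^2_D$, produce $f \in \calL^2(M)$ with $\calF f = g$ $\mu$-a.e., so that $\hat{\calF}[f] = \bClassAbs{\calF f} = \bClassAbs{g}$. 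Note this also settles the completeness of $L^2(M)$, which is the stated goal of the subsection.

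For the construction I would take the natural candidate
\[
	f(t) := \bigl( \sqrt{D(t)} \bigr)^{+} g(t), \quad t \in \Omega,
\]
where $(\cdot)^{+}$ is the Moore--Penrose pseudoinverse in $\Mat{d}{\CC}$. Recall (shown just before \eqref{eq:III:3.11}) that $\Ran \sqrt{D(t)} = \Ran D(t)$ for every $t$, and that $g(t) \in \Ran D(t)$ for $\mu$-a.e.\ $t$. Since $\sqrt{D(t)}\,\bigl(\sqrt{D(t)}\bigr)^{+}$ is the orthogonal projection of $\CC^d$ onto $\Ran \sqrt{D(t)}$, we get $(\calF f)(t) = \sqrt{D(t)} f(t) = g(t)$ for $\mu$-a.e.\ $t$. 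Granting measurability of $f$ for the moment, it then follows from Observation~\ref{obs:III:3} (cf.\ \eqref{eq:III:3.3.4}) and $\calF f = g$ $\mu$-a.e.\ that
\[
	\dScalP{f}_M = \sum_{j=1}^d \dScalP{(\calF f)_j}_\mu = \sum_{j=1}^d \dScalP{g_j}_\mu = \dScalP{g}_{\mu,d} < +\infty
\]
(the last step because $g \in \bigl(\calL^2(\mu)\bigr)^d$), so $f \in \calL^2(M)$, and $\hat{\calF}[f] = \bClassAbs{g}$ exhibits $\bClassAbs{g}$ in $\Ran \hat{\calF}$; as $\bClassAbs{g}$ was arbitrary, $\hat{\calF}$ is onto.

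The one delicate point --- and the step I expect to be the main obstacle --- is the measurability of $t \mapsto f(t)$, equivalently of $t \mapsto \bigl(\sqrt{D(t)}\bigr)^{+}$. The map $A \mapsto A^{+}$ is \emph{not} continuous on $\Mat{d}{\CC}$ (the rank can jump), but it is Borel: for each $\varepsilon > 0$ the map $A \mapsto (A^{*}A + \varepsilon I)^{-1} A^{*}$ is a rational (hence continuous) function of the entries of $A$, and $A^{+} = \lim_{\varepsilon \downarrow 0}(A^{*}A + \varepsilon I)^{-1} A^{*}$ (Tikhonov regularization), so $A \mapsto A^{+}$ is a pointwise limit of continuous maps, hence Borel. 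Composing with the $\frakM$-measurable matrix function $\sqrt{D}$ (Fact~\ref{fact:III:17}(1)) shows $t \mapsto \bigl(\sqrt{D(t)}\bigr)^{+}$ is $\frakM$-measurable, and then $f = \bigl(\sqrt{D}\bigr)^{+} g$ is $\frakM$-measurable as a pointwise product of a measurable matrix-valued and a measurable $\CC^d$-valued function.

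Finally, this same measurability also clarifies the small ``fragility'' noted around \eqref{eq:III:3.10}: the set $\{ t \in \Omega : g(t) \notin \Ran D(t) \}$ where the defining condition of $\calL^2_D$ may fail is genuinely $\frakM$-measurable, since $g(t) \in \Ran D(t) \Leftrightarrow \sqrt{D(t)}\,\bigl(\sqrt{D(t)}\bigr)^{+} g(t) = g(t)$; it is $\mu$-null by the hypothesis $g \in \calL^2_D$, and the condition does not depend on the choice of representative $D \in \DD_M$ since any two agree $\mu$-a.e. Putting all of this together, $\hat{\calF}$ is a surjective linear isometry from $L^2(M)$ onto the Hilbert space $L^2_D$, hence unitary, and in particular $L^2(M)$ is complete.
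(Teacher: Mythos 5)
Your proof is correct, and its skeleton is the same as the paper's: isometry is already in hand, so only $\Ran\hat{\calF}\supset L^2_D$ needs work, and your candidate $f(t)=\bigl(\sqrt{D(t)}\bigr)^{+}g(t)$ coincides with the paper's $f(t)=G\bigl(D(t)\bigr)g(t)$ (with $G(0)=0$, $G(x)=1/\sqrt{x}$ for $x>0$), since $\bigl(\sqrt{A}\bigr)^{+}=G(A)$ for $A\geq 0$. Where you genuinely diverge is in the two measurability issues, which are the only delicate points. For the measurability of $f$, the paper approximates the discontinuous scalar function $G$ by continuous $G_n$ agreeing with $G$ on $[\tfrac{1}{n};+\infty)\cup\{0\}$ and invokes the continuity of the functional calculus (Lemma~\ref{lem:III:16}), whereas you realize $A\mapsto A^{+}$ as the pointwise limit of the continuous maps $A\mapsto (A^{*}A+\varepsilon I)^{-1}A^{*}$; both are pointwise-limit arguments, but yours replaces the functional-calculus continuity lemma (left as an exercise in the paper) by an explicit rational formula. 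For the measurability of $\{t:g(t)\in\Ran D(t)\}$, the paper's Lemma~\ref{lem:III:20} goes through Gram determinants, while your identity $g(t)\in\Ran D(t)\Leftrightarrow \sqrt{D(t)}\bigl(\sqrt{D(t)}\bigr)^{+}g(t)=g(t)$ gets it for free from the measurability already established; you then avoid the paper's redefinition of $g$ on a null set altogether by working with classes, which neatly sidesteps the ``fragility'' about non-complete measures. In short: same construction, but your pseudoinverse formulation packages the two technical lemmas the paper defers into one self-contained device.
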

\begin{proof}
Having the isometry of $\hat{\calF}$ just proved, we need only to prove that $\Ran \hat{\calF} \supset L^2_D$.

Let $\bClassAbs{g} \in L^2_D$, i.e. $g \in \calL^2_D$ and define $f : \Omega \to \CC^d$ by the formula
\begin{equation}
	\label{eq:III:3.12}
	f(t) := G \big( D(t) \big) g(t), \quad
	t \in \Omega,
\end{equation}
where (as usual) $G(A)$ denotes the matrix (operator) being the function $G$ of the matrix (operator) $A \geq 0$, $A \in \Mat{d}{\CC}$, where $G : [0; +\infty) \to \RR$ is given by
\begin{equation}
	\label{eq:III:3.13}
	G(x) =
	\begin{cases}
		0 & x = 0 \\
		\frac{1}{\sqrt{x}} & x > 0.
	\end{cases}
\end{equation}
Note, that, as usual we choose $D(t) \geq 0$ for any $t \in \Omega$ (to make sense for \eqref{eq:III:3.12} for any $t$). To finish our proof we have to check:
\begin{align}
	\label{eq:III:3.14}
	&f \in \calL^2(M), \\
	\label{eq:III:3.15}
	&\bClassAbs{\calF f} = \bClassAbs{g},
\end{align}
because with these properties we obtain by \eqref{eq:III:3.9}
\[
	[f] \in L^2(M) \text{ and }
	\bClassAbs{g} = 
	\bClassAbs{\calF f} = 
	\hat{\calF}[f] \in \Ran \hat{\calF}.
\]
To get \eqref{eq:III:3.14} let us first check, that $f$ is measurable. If $G$ were continuous, then we would get is by the same argumentation, which proved that $\calF f$ was measurable for measurable $f$ (i.e. by Lemma~\ref{lem:III:16}). Our $G$ is not continuous, but lets "approximate" it, in a way, by the following continuous functions $G_n : [0, +\infty) \to \CC$:
\[
	G_n(x) :=
	\begin{cases}
		n \sqrt{n} x & \text{for } x < \frac{1}{n} \\
		\frac{1}{\sqrt{x}} & \text{for } x \geq \frac{1}{n}
	\end{cases},
	\quad x \geq 0, n \in \NN.
\]
So, $G_n$ and $G$ are equal after the restriction to $[\frac{1}{n}; +\infty) \cup \{0\} =: R_n$! Thus, for any matrix $A \geq 0$ in $\Mat{d}{\CC}$ we can fix some $N(A) \in \NN$ such that
\[
	\forall_{n \geq N(A)} \ \sigma(A) \subset R_n,
\]
because $\sigma(A)$ is a \emph{finite} set contained in $[0, \infty)$. Hence
\begin{equation}
	\label{eq:III:3.16}
	\forall_{n \geq N(A)} \ G_n(A) = G(A).
\end{equation}
Now defining $f_n : \Omega \to \CC^d$ for any $n \in \NN$ by
\begin{equation}
	\label{eq:III:3.17}
	f_n(t) := G_n \big( D(t) \big) g(t), \quad
	t \in \Omega
\end{equation}
we see that $f_n$ is measurable, by the continuity of $G_n$, and by \eqref{eq:III:3.16}, \eqref{eq:III:3.17} and \eqref{eq:III:3.12} $\{f_n \}_{n \in \NN}$ is pointwise convergent to $f$ because
\[
	\forall_{t \in \Omega} \ 
	\forall_{n \geq N} \big( D(t) \big) \
	f_n(t) = f(t).
\]
Now, having the measurability, by \eqref{eq:III:3.12} we have also
\[
	\dScalP{f}_M = 
	\int_\Omega \| \sqrt{D(t)} f(t) \|_{\CC^d}^2 \ud \mu(t) =
	\int_{\Omega} \|P(t) g(t) \|^2_{\CC^d} \ud \mu(t),
\]
with
\begin{equation}
	\label{eq:III:3.18}
	P(t) = \sqrt{D(t)} \cdot G \big( D(t) \big).
\end{equation}
By STh+FCTh for matrix $D(t)$, using the fact that for any $x \in [0; +\infty)$ we have 
\[
	\sqrt{x} G(x) = 
	\begin{cases}
		0 & x=0 \\
		1 & x > 0
	\end{cases}
\]
we see that $P(t) = E_{D(t)} \big( (0; +\infty) \big) = I - E_{D(t)}(\{0\}) = I - P_0(t)$ where $P_0(t) = E_{D(t)}(\{0\})$ is simply the orthogonal projection in $\CC^d$ on $\Ker D(t)$. Thus $P(t)$ is the orthogonal projection onto $\Ran D(t) = \big( \Ker D(t) \big)^\perp$. In particular for any $t \in \Omega$ $\| P(t) g(t) \|_{\CC^d} \leq \| g(t) \|_{\CC^d}$, so finally
\[
	\dScalP{f}_M \leq \dScalP{g}_{\mu, d} < \infty,
\]
and \eqref{eq:III:3.14} holds. But by \eqref{eq:III:3.4}, \eqref{eq:III:3.12} and \eqref{eq:III:3.18}
\[
	\forall_{t \in \Omega} \ 
	(\calF f)(t) = 
	\sqrt{D(t)} G \big( D(t) \big) g(t) = 
	P(t) g(t)
\]
and we assumed, that $g \in \calL^2_D$ which means that $g(t) \in \Ran D(t) = \Ran P(t)$ for $\mu$-a.e. $t \in \Omega$. Now, taking 
\[
	\omega := \{ t \in \Omega : g(t) \in \Ran D(t) \},
\]
and changing the values of $g$ in each $t \in \Omega \setminus \omega$ onto $0$ (which belongs to $\Ran D(t)$) we get
\[
	\calF f = g
\]
without changing the class $\bClassAbs{g}$, so \eqref{eq:III:3.15} also holds.

But ... -- be careful! -- Thee is still one "fragility"! We knew only, that $g \in \Ran D(t)$ for $\mu$ a.e. $t \in \Omega$ before this change on $\Omega \setminus \omega$ and this means ONLY, that $\Omega \setminus \omega$ is contained in a $\mu$-zero measure set. But $\mu$ can be not a complete measure, so it does not automatically means, that $\Omega \setminus \omega \in \frakM$ (equivalently $\omega \in \frakM$). And we need it, to know, that after our change we get really a measurable function $g$...

So to end the proof in all the details, we still need the following abstract result
\begin{lemma} \label{lem:III:20}
If $A : \Omega \to \Mat{d}{\CC}$ and $f : \Omega \to \CC^d$ are measurable, then the set $\{ t \in \Omega : f(t) \in \Ran A(t) \}$ is measurable.
\end{lemma}
\begin{proof}
An exercise. Some key hints. Let $v \in \CC^d$ and $B \in \Mat{d}{\CC}$
\begin{enumerate}[1)]
\item $v \in \Ran B$ iff $v \in \lin ( \{ B_1, \ldots, B_d \} )$ iff $\dist{v, \lin ( \{ B_1, \ldots, B_d \} } = 0$, where $B_i$ -- the $i$-th column of $B$
\item there exists a subset of $\{ B_1, \ldots, B_d \}$ forming a base system for $\lin ( \{ B_1, \ldots, B_d \} )$ (including the case of the $\emptyset$ subset for $\{0\}$ space ...)
\item there exists an "analytic" formula for $\dist{v, \lin ( \{ w_1, \ldots, B_k \} }$, when $(w_1, \ldots, w_k)$ is linearly independent (the formula uses: Gram determinants for some subsystems of vectors formed from $v, w_1, \ldots, w_k$). 
\end{enumerate}
\end{proof}
\end{proof}

\vspace{5ex}

\subsection{The subspace $S(M)$ of simple "functions" and $L^2_\Sigma(M)$} \label{sec:III:4}
We consider here two important subspaces of $L^2(M)$ for a matrix measure $M$ on $\frakM$ -- a $\sigma$-algebra of subsets of a set $\Omega$ (as before -- we fix here $\Omega$, $\frakM$ and $M$). The first $S(M)$ is the subspace consisting of $\CC^d$-simple "functions" (i.e., of their classes in $L^2(M)$ sense), and the second, larger one, is denoted by $L^2_\Sigma(M)$ (and does not have any special name...). It is worth to note, that $L^2_\Sigma(M)$ plays sometimes the similar role in $L^2(M)$ to the role of the Schwarz functions class in the standard $L^2(\RR)$ ($\RR$ with the Lebesgue measure) space. We shall prove here, that both subspaces are dense in $L^2(M)$.

Let us consider first so-called "vector characteristic functions", being just products $\chi_\omega c$ of scalar characteristic functions $\chi_\omega$ for $\omega \subset \Omega$ by vectors $c \in \CC^d$, i.e. $\chi_\omega c : \Omega \to \CC^d$,
\[
	(\chi_\omega c)(t) =
	\begin{cases}
		c & \text{for } t \in \omega \\
		0 & \text{for } t \notin \omega
	\end{cases}, \quad
	t \in \Omega,
\]
so $\chi_\omega$ is measurable, when $\omega \in \frakM$. Denote:
\begin{itemize}
\item $\VCF(\frakM) := \{ \chi_\omega c : c \in \CC^d, \omega \in \frakM \}$
\item $\calS(\frakM) := \lin \VCF(\frakM)$.
\end{itemize}

For any $\omega \in \frakM$ and $c \in \CC^d$, by Proposition~\ref{prop:III:5},
\begin{equation}
	\label{eq:III:4.1}
	\begin{split}
		\dScalP{\chi_\omega c}_M 
		&:=
		\int_\Omega \big\langle D_M(t) (\chi_\omega c)(t), (\chi_\omega c)(t) \big\rangle_{\CC^d} \ud \tr_M(t) \\
		&=
		\int_{\omega} \langle D_M(t) c, c \rangle_{\CC^d} \ud \tr_M(t) 
		\leq
		\|c\|_{\CC^d}^2 \cdot \tr_M(\omega).
	\end{split}
\end{equation}
Hence $\VCF(\frakM)$ and $\calS(\frakM)$ are included in $\calL^2(M)$, and $\calS(\frakM) \underset{\lin}{\subset} \calL^2(M)$.

Let us make here several simple (nomen omen...) observations.

\begin{fact} \label{fact:III:21}
\begin{enumerate}[1)]
\item $f \in \calS(\frakM) \Leftrightarrow \forall_{j=1,\ldots,d} \ f_j$ is a scalar ($\frakM$-)simple function\footnote{\label{fn:III:14} scalar simple function ($\frakM$-simple) -- i.e. a linear complex combination of characteristic functions $\chi_\omega$ with $\omega \in \frakM$ (or -- equivalently -- an $\frakM$-measurable function $g : \Omega \to \CC$ with $g(\Omega)$ -- finite).} $\Leftrightarrow$ $f$ is a $\CC^d$-vector $\frakM$-measurable function with finite image $f(\Omega)$.

\item If $\omega \in \frakM$, $c,c' \in \CC^d$, then
\begin{align}
	\label{eq:III:4.2}
	&\langle M(\omega) c, c' \rangle_{\CC^d} = \int_{\omega} \langle D_M(t) c, c' \rangle_{\CC^d} \ud \tr_M(t), \\
	\label{eq:III:4.3}
	&\dScalP{\chi_\omega c}_M = \tnorm{\chi_\omega c}^{2}_M = \langle M(\omega) c, c \rangle_{\CC^d}, \\
	\label{eq:III:4.4}
	&M(\omega) c = \int_{\omega} D_M(t) c \ud \tr_M(t).
\end{align}
\end{enumerate}
\end{fact}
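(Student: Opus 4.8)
The plan is to handle the two parts of the Fact independently; both reduce to the Radon--Nikodym representation \eqref{eq:III:1.5} together with the "linearity commutes with the integral" statement of Fact~\ref{fact:III:1.6}, plus elementary facts about simple functions. For 1) I would run a cycle of three implications. If $f\in\calS(\frakM)=\lin\VCF(\frakM)$, write $f=\sum_{k=1}^{m}\chi_{\omega_k}c_k$ with $\omega_k\in\frakM$ and $c_k\in\CC^d$; then each coordinate is $f_j=\sum_{k=1}^{m}(c_k)_j\,\chi_{\omega_k}$, manifestly a scalar $\frakM$-simple function. Next, if every $f_j$ is scalar $\frakM$-simple, then each $f_j$ is $\frakM$-measurable with finite image, so $f=(f_1,\dots,f_d)$ is $\frakM$-measurable and $f(\Omega)\subset f_1(\Omega)\times\cdots\times f_d(\Omega)$ is finite. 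Finally, if $f$ is $\frakM$-measurable with $f(\Omega)$ finite, then the level sets $\omega_v:=f^{-1}(\{v\})$ for $v\in f(\Omega)$ belong to $\frakM$, partition $\Omega$, and $f=\sum_{v\in f(\Omega)}\chi_{\omega_v}\,v\in\lin\VCF(\frakM)=\calS(\frakM)$. This closes the loop and proves the three conditions equivalent.

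For 2), recall first that $M(\omega)=\int_\omega D_M\ud\tr_M$ by \eqref{eq:III:1.5} and that $D_M$ is $\tr_M$-integrable. To obtain \eqref{eq:III:4.2} I would apply Fact~\ref{fact:III:1.6} (on the set $\omega$) to the $\CC$-linear functional $\varphi:\Mdc\to\CC$, $\varphi(A):=\langle Ac,c'\rangle_{\CC^d}$ — linearity in the matrix argument being immediate — which gives $\langle M(\omega)c,c'\rangle_{\CC^d}=\varphi\!\left(\int_\omega D_M\ud\tr_M\right)=\int_\omega\varphi(D_M(t))\ud\tr_M(t)=\int_\omega\langle D_M(t)c,c'\rangle_{\CC^d}\ud\tr_M(t)$. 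The identical argument with the $\CC$-linear map $\psi:\Mdc\to\CC^d$, $\psi(A):=Ac$, yields \eqref{eq:III:4.4}. For \eqref{eq:III:4.3}, the first equality is just the definition of the seminorm, i.e. \eqref{eq:III:2.2} together with \eqref{eq:III:2.5'}; for the second equality, the computation already carried out in \eqref{eq:III:4.1} shows $\dScalP{\chi_\omega c}_M=\int_\omega\langle D_M(t)c,c\rangle_{\CC^d}\ud\tr_M(t)$, and this equals $\langle M(\omega)c,c\rangle_{\CC^d}$ by \eqref{eq:III:4.2} specialized to $c'=c$.

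I do not anticipate a genuine obstacle: the whole statement is an exercise in applying the machinery already set up. The only points that deserve a word of care are that in 1) the three conditions must be read with the measurability hypothesis in force (so that "finite image" plus "$\frakM$-measurable" really does produce a simple function, and conversely coordinatewise simplicity gives joint measurability), and that in 2) one should note explicitly that $A\mapsto\langle Ac,c'\rangle_{\CC^d}$ and $A\mapsto Ac$ are $\CC$-linear in $A$ and that $D_M\in(\calL^1(\tr_M))^{d^2}$, which is precisely what licenses the use of Fact~\ref{fact:III:1.6} verbatim.
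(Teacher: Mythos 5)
Your proof is correct and is exactly the argument the paper intends: the paper itself dismisses this Fact with ``Easy exercise,'' and your write-up supplies the details using precisely the tools already set up (the level-set decomposition for simple functions in part 1, and Fact~\ref{fact:III:1.6} applied to the linear maps $A \mapsto \langle Ac, c'\rangle_{\CC^d}$ and $A \mapsto Ac$ together with \eqref{eq:III:1.5} and the computation \eqref{eq:III:4.1} in part 2). Nothing to add.
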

\begin{proof}
Easy exercise.
\end{proof}

Now we define the subspace $S(M) \underset{\lin}{\subset} L^2(M)$ in the natural way\footnote{\label{fn:III:15} Note here a difference in notation: we used $\frakM$ for $\calS(\frakM)$, because it was determined only by $\frakM$, and not on the choice of $M$ on $\frakM$, but $M$ in $S(M)$ is important, because $[f]$ is the class in the "$L^2(M)$" sense, although we did not show this dependence on $M$ for the $[\cdot]$ notation.}
\begin{equation}
	\label{eq:III:4.5}
	S(M) := \big\{ [f] \in L^2(M) : f \in \calS(\frakM) \big\}.
\end{equation}
Let us denote by $\pi_M$ the quotient mapping
\[
	\pi_M : \calL^2(M) \to L^2(M),
\]
given by
\[
	\pi_M(f) = [f], \quad
	f \in \calL^2(M)
\]
By the diagram
\[
	\begin{tikzcd}
		\calL^2(M) \arrow[d, "\pi_M"] & [-28pt]\underset{\lin}\supset & [-28pt] \calS(\frakM) \ar[d, "\pi_M"] & [-28pt]= & [-28pt]\lin \VCF(\frakM) \ar[d, "\pi_M"] \\
		L^2(M) & [-28pt]\underset{\lin}\supset & S(M) &  [-28pt]= & [-28pt]\lin[\VCF]
	\end{tikzcd} 
\]
and linearity of $\pi_M$ we easily obtain 
\begin{equation}
	\label{eq:III:4.6}
	S(M) = \pi_M \big( \calS(\frakM) \big) = \lin [\VCF],
\end{equation}
where
\[
	[\VCF] := \big\{ [f] \in L^2(M) : f \in \VCF(\frakM) \} =
	\pi_M \big( \VCF(\frakM) \big).
\]

Let us define now the second subspace -- $L^2_\Sigma(M)$. First observe that if $f,g : \Omega \to \CC^d$ then for each $t \in \Omega$ and $\omega \in \frakM$
\begin{equation}
	\label{eq:III:4.7}
	\begin{split}
		\big\langle M(\omega) f(t), g(t) \rangle_{\CC^d} 
		&=
		\sum_{i=1}^d \big( M(\omega) f(t) \big)_i \overline{g_i(t)} \\
		&=
		\sum_{i,j=1,\ldots,d} f_j(t) \overline{g_i(t)} M_{ij}(\omega).
	\end{split}
\end{equation}
Now, inspired by the above formula, we would like to "integrate somehow with respect to $M$, instead of using a fixed $t$ and $\omega$", to get a properly defined semi-scalar product defined on a certain "large" set of functions $\calL^2_\Sigma(M)$ from $\Omega$ into $\CC^d$. Namely we would like to define $\dScalP{\cdot, \cdot}_{\Sigma} : \calL^2_\Sigma(M) \times \calL^2_\Sigma(M) \to \CC$ by (compare to \eqref{eq:III:4.7})
\begin{equation}
	\label{eq:III:4.8}
	\dScalP{f, g}_{\Sigma} :=
	\sum_{i,j=1,\ldots, d} \int_{\Omega} f_j \overline{g_i} \ud M_{ij}, \quad
	f,g \in \calL^2_\Sigma(M),
\end{equation}
but we should properly define the space $\calL^2_\Sigma(M)$ of functions. In particular, each integral from the RHS has to be a well-defined complex number. Recall\footnote{\label{fn:III:16} See Proposition~\ref{prop:III:2}.} that $M_{ij}(\omega) = \big(M(\omega) \big)_{ij}$, and each $M_{ij}$ is a complex measure on $\Omega$, so we need\footnote{\label{fn:III:17} Recall that $h \in \calL^p \Leftrightarrow \int_{_\Omega} |h|^p \ud |\mu| < \infty$ for complex measure $\mu$, where $|\mu|$ is the variation measure for $\mu$ and $p \in [1; +\infty)$ (see also \cite{Diestel1977}).} $f_j \overline{g_i} \in \calL^1(M_{ij})$ for any $i,j=1,\ldots,d$. In particular we need $f_i \overline{f_i} = |f_i|^2 \in \calL^1(M_{ii})$ that is $f_i \in \calL^2(M_{ii})$, equivalently, for any $i=1,\ldots,d$. And "we can be glad" that the above condition is sufficient for us. -- Namely, the following result holds

\begin{proposition} \label{prop:III:22}
If $h_1 \in \calL^2(M_{ii})$, $h_2 \in \calL^2(M_{jj})$ for some $i,j=1,\ldots,d$, then $h_1 h_2 \in \calL^1(M_{ij})$ and
\begin{equation}
	\label{eq:III:4.9}
	\bigg| \int_{\Omega} h_1 h_2 \ud M_{ij} \bigg| \leq
	\int_{\Omega} |h_1 h_2| \ud |M_{ij}| \leq
	\bigg( \int_{\Omega} |h_1|^2 \ud |M_{ii}| \bigg)^{1/2} \cdot
	\bigg( \int_{\Omega} |h_2|^2 \ud |M_{jj}| \bigg)^{1/2}.
\end{equation}
\end{proposition}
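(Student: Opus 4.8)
The plan is to reduce everything to a scalar Cauchy--Schwarz argument with respect to the finite measure $\tr_M$, using the trace density representation of $M$ from subsection~\ref{sec:III:1}. Fix a trace density $D_M \in \DD_M$. By \eqref{eq:III:1.5}, taken term by term, each $M_{k\ell}$ (for $k,\ell=1,\ldots,d$) is the complex measure with density $(D_M)_{k\ell}$ with respect to $\tr_M$; consequently $|M_{k\ell}|$ is the non-negative, finite measure with density $|(D_M)_{k\ell}|$ with respect to $\tr_M$, and for $k=\ell$ one simply has $|M_{kk}| = M_{kk}$, with density $(D_M)_{kk} \geq 0$.

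First I would record the pointwise estimate
\[
    \bigl| (D_M(t))_{ij} \bigr| \leq \sqrt{(D_M(t))_{ii}} \cdot \sqrt{(D_M(t))_{jj}}
    \qquad \text{for } \tr_M\text{-a.e. } t \in \Omega,
\]
which holds wherever $D_M(t) \geq 0$ (Proposition~\ref{prop:III:5}): write $(D_M(t))_{ij} = \langle D_M(t) e_j, e_i \rangle_{\CC^d} = \langle \sqrt{D_M(t)} e_j, \sqrt{D_M(t)} e_i \rangle_{\CC^d}$, apply the Cauchy--Schwarz inequality in $\CC^d$, and note $\langle \sqrt{D_M(t)} e_k, \sqrt{D_M(t)} e_k \rangle_{\CC^d} = (D_M(t))_{kk}$.

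Next I would chain the inequalities of \eqref{eq:III:4.9}. For the second (outer) inequality, pass to densities and use the pointwise estimate:
\[
    \int_\Omega |h_1 h_2| \ud |M_{ij}|
    = \int_\Omega |h_1 h_2| \cdot |(D_M)_{ij}| \ud \tr_M
    \leq \int_\Omega \bigl( |h_1| \sqrt{(D_M)_{ii}} \bigr)\bigl( |h_2| \sqrt{(D_M)_{jj}} \bigr) \ud \tr_M,
\]
and then apply the Schwarz inequality for the $\tr_M$-integral to the two factors $|h_1|\sqrt{(D_M)_{ii}}$ and $|h_2|\sqrt{(D_M)_{jj}}$, getting the bound
\[
    \Bigl( \int_\Omega |h_1|^2 (D_M)_{ii} \ud \tr_M \Bigr)^{1/2}
    \Bigl( \int_\Omega |h_2|^2 (D_M)_{jj} \ud \tr_M \Bigr)^{1/2}.
\]
Since $(D_M)_{ii}$ is a density of $M_{ii} = |M_{ii}|$ with respect to $\tr_M$, one has $\int_\Omega |h_1|^2 (D_M)_{ii} \ud \tr_M = \int_\Omega |h_1|^2 \ud |M_{ii}|$, and likewise for the $j$-term, which is precisely the right-hand side of \eqref{eq:III:4.9}; it is finite because $h_1 \in \calL^2(M_{ii})$ and $h_2 \in \calL^2(M_{jj})$. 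Hence $\int_\Omega |h_1 h_2| \ud |M_{ij}| < \infty$, i.e.\ $h_1 h_2 \in \calL^1(M_{ij})$, so the spectral integral $\int_\Omega h_1 h_2 \ud M_{ij}$ is defined and the first inequality $\bigl| \int_\Omega h_1 h_2 \ud M_{ij} \bigr| \leq \int_\Omega |h_1 h_2| \ud |M_{ij}|$ is the standard property of integration against a complex measure.

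I expect the only genuinely delicate point to be the bookkeeping with the variation measures and Radon--Nikodym densities: namely that a complex measure of the form $g \ud \mu$ has variation $|g| \ud \mu$, so that integrating $|h_1 h_2|$ against $|M_{ij}|$ is the same as integrating $|h_1 h_2|\cdot|(D_M)_{ij}|$ against $\tr_M$. Everything else is the Cauchy--Schwarz inequality applied twice — once pointwise in $\CC^d$, once in $L^2(\tr_M)$. One should also keep in mind that the pointwise bound on $(D_M)_{ij}$ holds only $\tr_M$-a.e., but since all integrals in play are with respect to $\tr_M$ or to measures absolutely continuous with respect to it, this is harmless.
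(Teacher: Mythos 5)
Your proof is correct, but it takes a genuinely different route from the paper. The paper proves the proposition as a direct consequence of two auxiliary lemmas: Lemma~\ref{lem:III:23}, a \emph{set-wise} Cauchy--Schwarz inequality $|M_{ij}|(\omega) \leq \big(M_{ii}(\omega)\big)^{1/2}\big(M_{jj}(\omega)\big)^{1/2}$ for the variation measure, proved by applying Cauchy--Schwarz to $\langle \sqrt{M(\omega)}\,e_j, \sqrt{M(\omega)}\,e_i\rangle_{\CC^d}$ on each piece of a finite partition of $\omega$ and then taking the supremum over partitions; and Lemma~\ref{lem:III:24}, an abstract ``generalized Schwarz inequality'' for integrals against measures satisfying such a set-wise domination (which the paper leaves as an exercise). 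You instead work entirely at the level of Radon--Nikodym densities with respect to $\tr_M$: the pointwise bound $|(D_M(t))_{ij}| \leq \sqrt{(D_M(t))_{ii}}\sqrt{(D_M(t))_{jj}}$ (the same Cauchy--Schwarz computation, but applied to the density matrix rather than to $M(\omega)$), the identification $|M_{ij}| = |(D_M)_{ij}|\ud\tr_M$, and one application of the ordinary Schwarz inequality in $L^2(\tr_M)$. The paper's route is intrinsic --- it never needs to choose a trace density, and Lemma~\ref{lem:III:23} is a statement of independent interest --- but it defers the integral-level lemma to an unwritten appendix; your route is fully self-contained given the material of subsection~\ref{sec:III:1}, at the cost of invoking the (standard, but nontrivial) theorem that the total variation of a complex measure of the form $g\ud\mu$ is $|g|\ud\mu$ --- a fact the paper itself uses later in~\eqref{eq:III:5.22}, so this is an acceptable dependency.
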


This is a direct consequence of the following two lemmas.
\begin{lemma} \label{lem:III:23}
If $M$ is a complex measure on $\frakM$, then for any $i,j = 1,\ldots,d$
\[
	\forall_{\omega \in \frakM} \ 
	|M_{ij}|(\omega) \leq 
	\big( M_{ii}(\omega) \big)^{1/2} \cdot \big( M_{jj}(\omega) \big)^{1/2}.
\]
\end{lemma}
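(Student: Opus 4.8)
The plan is to estimate $|M_{ij}|(\omega)$ directly from its definition as a supremum over partitions, reducing the whole inequality to two elementary facts: the entrywise bound satisfied by a positive semidefinite matrix, and the scalar Cauchy--Schwarz inequality for series.

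First I would recall that, since each $M_{ij}$ is a complex measure on $\frakM$ (Proposition~\ref{prop:III:2}(d)), for $\omega \in \frakM$ its variation is
\[
	|M_{ij}|(\omega) = \sup \bigg\{ \sum_{k} |M_{ij}(\omega_k)| : \{\omega_k\}_{k} \text{ a countable } \frakM\text{-partition of } \omega \bigg\}.
\]
Fix one such partition $\{\omega_k\}_k$. For each $k$ the matrix $M(\omega_k) \in \Mdc$ is self-adjoint and $\geq 0$ in the $\CC^d$ sense, so in particular its Hermitian $2\times 2$ principal submatrix indexed by $\{i,j\}$ is positive semidefinite; computing its determinant gives
\[
	|M_{ij}(\omega_k)|^2 \leq M_{ii}(\omega_k)\,M_{jj}(\omega_k),
\]
and both factors on the right are finite and non-negative, since $M_{ii}$ and $M_{jj}$ are finite measures (Proposition~\ref{prop:III:2}(d)). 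Hence $|M_{ij}(\omega_k)| \leq M_{ii}(\omega_k)^{1/2}\,M_{jj}(\omega_k)^{1/2}$ for every $k$.

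Next I would sum over $k$ and apply the Cauchy--Schwarz inequality for (possibly infinite) series with $a_k = M_{ii}(\omega_k)^{1/2}$ and $b_k = M_{jj}(\omega_k)^{1/2}$:
\[
	\sum_k |M_{ij}(\omega_k)| \leq \Big( \sum_k M_{ii}(\omega_k) \Big)^{1/2} \Big( \sum_k M_{jj}(\omega_k) \Big)^{1/2} = \big( M_{ii}(\omega) \big)^{1/2}\big( M_{jj}(\omega) \big)^{1/2},
\]
the last equality being the countable additivity of $M_{ii}$ and $M_{jj}$. Since the right-hand side does not depend on the partition, taking the supremum over all countable $\frakM$-partitions of $\omega$ yields $|M_{ij}|(\omega) \leq (M_{ii}(\omega))^{1/2}(M_{jj}(\omega))^{1/2}$, as claimed.

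There is no genuine obstacle here: the proof is a short chain of standard facts. The only point that deserves to be stated with a little care is that $M_{ii}$ and $M_{jj}$ are honest finite measures (again Proposition~\ref{prop:III:2}(d)), so the square roots and the $\ell^2$-type Cauchy--Schwarz manipulation on the series are legitimate and each partition sum is finite.
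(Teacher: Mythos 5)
Your proof is correct and follows essentially the same route as the paper's: a pointwise bound $|M_{ij}(\omega_k)|\le M_{ii}(\omega_k)^{1/2}M_{jj}(\omega_k)^{1/2}$ on each partition piece, then the scalar Cauchy--Schwarz inequality on the sum, then the supremum over partitions. The only (cosmetic) difference is that you obtain the single-set estimate from the non-negativity of the determinant of the $2\times 2$ principal submatrix of $M(\omega_k)\ge 0$, whereas the paper writes $M_{ij}(\omega_k)=\langle \sqrt{M(\omega_k)}\,e_j,\sqrt{M(\omega_k)}\,e_i\rangle$ and applies Cauchy--Schwarz in $\CC^d$; these are equivalent.
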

\begin{proof}
Fix $i,j=1,\ldots,d$.

Let $e_s$ be the $s$-th standard base vector in $\CC^d$ for any $s=1,\ldots,d$. For any $\omega \in \frakM$, by the Schwarz inequality\footnote{\label{fn:III:18} Note, that "usually" $|M_{ij}(\omega)| \neq |M_{ij}|(\omega)$ ...}:
\begin{align*}
	|M_{ij}(\omega| 
	&= 
	|\langle M(\omega) e_j, e_j \rangle_{\CC^d}| \\
	&=
	\big| \big\langle \sqrt{M(\omega)} e_j, \sqrt{M(\omega)} e_i \big\rangle_{\CC^d} \big| \\
	&\leq
	\big( \big\langle \sqrt{M(\omega)} e_j, \sqrt{M(\omega)} e_j \big\rangle_{\CC^d} \big)^{1/2} \cdot
	\big( \big\langle \sqrt{M(\omega)} e_i, \sqrt{M(\omega)} e_i \big\rangle_{\CC^d} \big)^{1/2} \\
	&=
	\big( \big\langle M(\omega) e_j, e_j \big\rangle_{\CC^d} \big)^{1/2} \cdot
	\big( \big\langle M(\omega) e_i, e_i \big\rangle_{\CC^d} \big)^{1/2} \\
	&=
	\big( M_{ii}(\omega) \big)^{1/2} \cdot \big( M_{ii}(\omega) \big)^{1/2}.
\end{align*}
Now fix $\omega \in \frakM$ and consider a certain disjoint $\frakM$-decomposition $\omega_1, \ldots, \omega_n$ of $\omega$. Using the above estimate to each of $\omega_s$ ($s=1,\ldots,n$) we get (from the Schwarz inequality, again)
\begin{align*}
	\sum_{s=1}^n \big| M_{ij}(\omega_s) \big| 
	&\leq
	\sum_{s=1}^n \big( M_{ii}(\omega_s) \big)^{1/2} \cdot \big( M_{jj}(\omega_s) \big)^{1/2} \\
	&\leq
	\bigg( \sum_{s=1}^n  M_{ii}(\omega_s) \bigg)^{1/2} \cdot
	\bigg( \sum_{s=1}^n  M_{jj}(\omega_s) \bigg)^{1/2} \\
	&=
	\big( M_{ii}(\omega) \big)^{1/2} \cdot \big( M_{jj}(\omega) \big)^{1/2}.
\end{align*}
So, taking the supremum over all the decompositions, we get the assertion for $\omega$.
\end{proof}

\begin{lemma}["Generalized Schwarz inequality"] \label{lem:III:24}
Suppose, that $\mu, \mu_1, \mu_2$ are measures on $\frakM$, satisfying
\[
	\forall_{\omega \in \frakM} \ 
	\mu(\omega) \leq 
	\big( \mu_1(\omega) \mu_2(\omega) \big)^{1/2}.
\]
If $f_i \in \calL^2(\mu_i)$, $i=1,2$, then $f_1 \cdot f_2 \in \calL^1(\mu)$ and
\[
	\int_{\Omega} |f_1 f_2| \ud \mu \leq
	\bigg( \int_{\Omega} |f_1|^2 \ud \mu_1 \bigg)^{1/2} \cdot
	\bigg( \int_{\Omega} |f_2|^2 \ud \mu_2 \bigg)^{1/2}
\]
\end{lemma}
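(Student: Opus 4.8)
The plan is to establish the inequality first for nonnegative simple functions, where it reduces to the ordinary finite-dimensional Cauchy--Schwarz inequality, and then to bootstrap to arbitrary $f_1 \in \calL^2(\mu_1)$, $f_2 \in \calL^2(\mu_2)$ by monotone convergence. First I would reduce to the case $f_1, f_2 \geq 0$: the product $f_1 f_2$ is $\frakM$-measurable, $|f_1 f_2| = |f_1|\,|f_2|$, and the right-hand side only involves $|f_i|^2$, so it suffices to treat nonnegative $f_1, f_2$ (and then $\int_\Omega |f_1 f_2| \ud\mu = \int_\Omega f_1 f_2 \ud\mu$).

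For the simple-function step, write $f_1 = \sum_{s=1}^n a_s \chi_{\omega_s}$, $f_2 = \sum_{s=1}^n b_s \chi_{\omega_s}$ over a common disjoint $\frakM$-decomposition $\{\omega_s\}$ (one may always pass to a common refinement of the decompositions of $f_1$ and $f_2$), with $a_s, b_s \geq 0$. The hypotheses $f_i \in \calL^2(\mu_i)$ force $\mu_1(\omega_s) < \infty$ for every $s$ with $a_s \neq 0$ and $\mu_2(\omega_s) < \infty$ for every $s$ with $b_s \neq 0$; hence on the indices with $a_s b_s \neq 0$ all three of $\mu_1(\omega_s)$, $\mu_2(\omega_s)$, $\mu(\omega_s)$ are finite, while the other indices contribute $0$ to all the integrals. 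Then
\[
	\int_\Omega f_1 f_2 \ud\mu = \sum_{s} a_s b_s\, \mu(\omega_s) \leq \sum_{s} \big( a_s \mu_1(\omega_s)^{1/2} \big)\big( b_s \mu_2(\omega_s)^{1/2} \big) \leq \bigg( \sum_{s} a_s^2 \mu_1(\omega_s) \bigg)^{1/2}\bigg( \sum_{s} b_s^2 \mu_2(\omega_s) \bigg)^{1/2},
\]
using the standing bound $\mu(\omega_s) \leq (\mu_1(\omega_s)\mu_2(\omega_s))^{1/2}$ and then the Cauchy--Schwarz inequality in $\RR^n$; the last product is exactly $\big( \int_\Omega f_1^2 \ud\mu_1 \big)^{1/2}\big( \int_\Omega f_2^2 \ud\mu_2 \big)^{1/2}$.

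For general nonnegative measurable $f_1, f_2$ I would take nondecreasing sequences of nonnegative $\frakM$-simple functions $s_n \uparrow f_1$, $t_n \uparrow f_2$ (standard approximation); then $s_n t_n \uparrow f_1 f_2$ pointwise, and $s_n \in \calL^2(\mu_1)$, $t_n \in \calL^2(\mu_2)$ since $s_n \leq f_1$, $t_n \leq f_2$. Applying the simple-function estimate to $s_n, t_n$ gives $\int_\Omega s_n t_n \ud\mu \leq \big(\int_\Omega s_n^2 \ud\mu_1\big)^{1/2}\big(\int_\Omega t_n^2 \ud\mu_2\big)^{1/2} \leq \big(\int_\Omega f_1^2 \ud\mu_1\big)^{1/2}\big(\int_\Omega f_2^2 \ud\mu_2\big)^{1/2}$ for all $n$; monotone convergence on the left then yields $\int_\Omega f_1 f_2 \ud\mu \leq \big(\int_\Omega f_1^2 \ud\mu_1\big)^{1/2}\big(\int_\Omega f_2^2 \ud\mu_2\big)^{1/2} < \infty$, whence $f_1 f_2 \in \calL^1(\mu)$ and the inequality holds. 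Undoing the initial reduction completes the proof.

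I do not expect a real obstacle here — the argument mirrors the usual proof of Cauchy--Schwarz (or Hölder) for a single measure. The only point that needs a moment's care is the bookkeeping in the simple-function step when some $\mu_i(\omega_s)$ could be infinite, but the $\calL^2$-hypotheses make the offending terms vanish, so the discrete Cauchy--Schwarz applies without fuss.
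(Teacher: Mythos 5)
Your proof is correct. Note that the paper itself does not supply an argument for Lemma~\ref{lem:III:24} --- it is explicitly left as an exercise, with a proof ``planned for a future version'' --- so there is nothing to compare against; your write-up would in fact fill that gap. The three-step scheme (reduce to nonnegative functions, prove the estimate for simple functions over a common disjoint refinement via the hypothesis $\mu(\omega_s) \leq (\mu_1(\omega_s)\mu_2(\omega_s))^{1/2}$ followed by discrete Cauchy--Schwarz, then pass to the limit by monotone convergence) is sound, and you correctly handle the only delicate point: when $a_s b_s = 0$ the corresponding $\mu_i(\omega_s)$ may be infinite, but those indices contribute nothing, while for $a_s b_s \neq 0$ the $\calL^2$-hypotheses force $\mu_1(\omega_s), \mu_2(\omega_s) < \infty$ and hence $\mu(\omega_s) < \infty$, so the discrete inequality applies term by term. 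The final monotone-convergence step is also fine since $s_n t_n \uparrow f_1 f_2$ and $\int s_n^2 \ud\mu_1 \leq \int f_1^2 \ud\mu_1$, $\int t_n^2 \ud\mu_2 \leq \int f_2^2 \ud\mu_2$. This is also consistent with how the paper proves the companion Lemma~\ref{lem:III:23} (finite decompositions plus discrete Cauchy--Schwarz), so your route is very much in the spirit the author intended.
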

\begin{proof}

We leave this result temporarily as an exercise for the reader, however  the proof is planned in one of the next versions (in Appendix).
\end{proof}

Note, that taking $\mu = \mu_1 = \mu_2$ we get simply "usual Schwarz" result.

Now, having already Lemma~\ref{lem:III:24} we can give the necessary definition of $\calL^2_\Sigma(M)$:
\[
	\calL^2(M) := 
	\big\{ f \in \Omega \to \CC^d : f \text{is } \frakM\text{-measurable and } \forall_{j=1,\ldots,d} \ f_j \in \calL^2(M_{jj}) \big\}.
\]
With such $\calL^2_\Sigma(M)$ the formula \eqref{eq:III:4.8} for $\dScalP{f,g}_\Sigma$ has sense for $f,g \in \calL^2_\Sigma(M)$, by Lemma~\ref{lem:III:24}. Moreover, we have
\begin{fact}	\label{fact:III:25}
\begin{enumerate}[1)]
\item $\calL^2_\Sigma(M) \underset{\lin}{\subset} \calL^2(M)$
\item if $f,g \in \calL^2_\Sigma(M)$ then
\[
	\dScalP{f,g}_\Sigma = \dScalP{f,g}_M,
\]
\item $S(M) \underset{\lin}{\subset} \calL^2_\Sigma(M)$.
\end{enumerate}
\end{fact}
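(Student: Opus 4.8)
The plan is to prove the three assertions in order, since the first is needed to make sense of the equality in the second, and the third is then immediate. Throughout I would fix, as in Fact~\ref{fact:III:17}, a trace density $D_M\in\DD_M$ with $D_M(t)\geq 0$ for \emph{every} $t\in\Omega$, so that $\sqrt{D_M(t)}$ is defined everywhere. Everything rests on the Cauchy--Schwarz bound for positive semidefinite matrices: if $A\in\Mdc$ and $A\geq 0$, then $|A_{ij}|\leq A_{ii}^{1/2}A_{jj}^{1/2}$ --- write $A_{ij}=\langle\sqrt{A}\,e_j,\sqrt{A}\,e_i\rangle_{\CC^d}$ and apply the scalar Schwarz inequality; this is exactly the estimate already used in the proof of Lemma~\ref{lem:III:23}, there applied to $M(\omega)$. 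A second, purely bookkeeping ingredient: since $(D_M)_{jj}$ is a Radon--Nikodym density of the non-negative finite measure $M_{jj}$ with respect to $\tr_M$ (this is how $D_M$ was built, see \eqref{eq:III:1.5}), one has $\int_\Omega|h|^2\,(D_M)_{jj}\ud\tr_M=\int_\Omega|h|^2\ud M_{jj}=\tnorm{h}^2_{M_{jj}}$ for every measurable scalar $h$, both sides allowed to be $+\infty$.

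\emph{First assertion.} Linearity of $\calL^2_\Sigma(M)$ inside the space of measurable $\CC^d$-functions is clear, each condition $f_j\in\calL^2(M_{jj})$ cutting out a linear subspace. For the inclusion $\calL^2_\Sigma(M)\sublin\calL^2(M)$, take $f\in\calL^2_\Sigma(M)$ and estimate pointwise, using $\sqrt{D_M(t)}f(t)=\sum_{j=1}^d f_j(t)\sqrt{D_M(t)}\,e_j$ and the triangle inequality in $\CC^d$,
\[
	\langle D_M(t)f(t),f(t)\rangle_{\CC^d}=\big\|\sqrt{D_M(t)}f(t)\big\|_{\CC^d}^{2}\leq\Big(\sum_{j=1}^d|f_j(t)|\,(D_M(t))_{jj}^{1/2}\Big)^{2}.
\]
Integrating this against $\tr_M$ (recall \eqref{eq:III:2.1}) and applying Minkowski's inequality in $\calL^2(\tr_M)$ to the non-negative functions $t\mapsto|f_j(t)|\,(D_M(t))_{jj}^{1/2}$ together with the density identity above, I get $\tnorm{f}_M\leq\sum_{j=1}^d\tnorm{f_j}_{M_{jj}}<\infty$, hence $\dScalP{f}_M<\infty$, i.e. $f\in\calL^2(M)$. (Equivalently one may route this through $\calF f$ and Fact~\ref{fact:III:17}.)

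\emph{Second assertion.} Fix $f,g\in\calL^2_\Sigma(M)$; by the first assertion both lie in $\calL^2(M)$, so $\dScalP{f,g}_M$ is defined, and expanding the $\CC^d$-inner product in \eqref{eq:III:2.3} gives $\dScalP{f,g}_M=\int_\Omega\sum_{i,j=1}^d(D_M)_{ij}\,f_j\,\overline{g_i}\ud\tr_M$. The task is to pull this finite sum outside the integral and then, term by term, replace $(D_M)_{ij}\ud\tr_M$ by $\ud M_{ij}$, reaching $\dScalP{f,g}_\Sigma$ of \eqref{eq:III:4.8}. Pulling the sum out needs each summand $(D_M)_{ij}f_j\overline{g_i}$ to lie in $\calL^1(\tr_M)$, which follows from the pointwise bound $|(D_M)_{ij}f_j\overline{g_i}|\leq\big(|f_j|\,(D_M)_{jj}^{1/2}\big)\big(|g_i|\,(D_M)_{ii}^{1/2}\big)$ (matrix Schwarz) and the scalar Schwarz inequality in $\calL^2(\tr_M)$, since by the density identity both factors are in $\calL^2(\tr_M)$. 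For the replacement, recall $M_{ij}(\omega)=\int_\omega(D_M)_{ij}\ud\tr_M$ and that the variation of a complex measure with density $\rho$ with respect to a positive measure $\mu$ equals $|\rho|\ud\mu$; hence $h\in\calL^1(M_{ij})$ iff $h(D_M)_{ij}\in\calL^1(\tr_M)$, and then $\int h\ud M_{ij}=\int h(D_M)_{ij}\ud\tr_M$. Applying this with $h=f_j\overline{g_i}$ (indeed in $\calL^1(M_{ij})$, consistently with Proposition~\ref{prop:III:22}) and summing over $i,j$ yields $\dScalP{f,g}_\Sigma=\dScalP{f,g}_M$.

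\emph{Third assertion, and the main obstacle.} Since $\calS(\frakM)=\lin\VCF(\frakM)$ and $\calL^2_\Sigma(M)$ is linear by the first assertion, it suffices to check $\chi_\omega c\in\calL^2_\Sigma(M)$ for $\omega\in\frakM$ and $c\in\CC^d$; but the $j$-th component of $\chi_\omega c$ is $c_j\chi_\omega$, and $\int_\Omega|c_j\chi_\omega|^2\ud M_{jj}=|c_j|^2 M_{jj}(\omega)\leq|c_j|^2 M_{jj}(\Omega)<\infty$ because $M_{jj}$ is a finite measure (Proposition~\ref{prop:III:2}). Thus $\calS(\frakM)\sublin\calL^2_\Sigma(M)$, and applying the quotient map $\pi_M$ gives $S(M)=\pi_M\big(\calS(\frakM)\big)\subseteq\pi_M\big(\calL^2_\Sigma(M)\big)$, which is the third assertion. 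The only genuinely delicate step is the term-by-term manipulation in the second assertion: one must \emph{not} assume $f_j,g_i\in\calL^2(\tr_M)$ --- in general they are not --- so the $\calL^1(\tr_M)$-integrability of the individual products $(D_M)_{ij}f_j\overline{g_i}$ must be obtained via the "$(D_M)^{1/2}$-weighted" estimates, and the passage from $\int\,\cdot\,\ud M_{ij}$ to $\int\,\cdot\,(D_M)_{ij}\ud\tr_M$ requires the variation-of-a-density fact, not merely the defining formula for $M_{ij}$. Everything else is routine once the bound $|A_{ij}|\leq A_{ii}^{1/2}A_{jj}^{1/2}$ for $A\geq 0$ is in hand.
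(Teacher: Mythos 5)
Your proof is correct, and its core is the same computation as the paper's: expand $\langle D_M(t)f(t),g(t)\rangle_{\CC^d}$ into the double sum $\sum_{i,j}(D_M)_{ij}f_j\overline{g_i}$, use the density relation $M_{ij}=(D_M)_{ij}\ud\tr_M$ (together with the variation fact $|{(D_M)_{ij}\ud\tr_M}|=|(D_M)_{ij}|\ud\tr_M$) to convert term by term, and control integrability by a Schwarz-type bound. The differences are organizational. The paper proves assertion 2) first and extracts assertion 1) as a byproduct: for $f=g$ the chain of equalities shows $\dScalP{f}_M=\dScalP{f,f}_\Sigma$, which is a finite complex number by Proposition~\ref{prop:III:22}, hence $f\in\calL^2(M)$. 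You instead prove 1) up front via the direct pointwise estimate $\big\|\sqrt{D_M(t)}f(t)\big\|\leq\sum_j|f_j(t)|(D_M(t))_{jj}^{1/2}$ and Minkowski, obtaining the explicit (and slightly stronger) bound $\tnorm{f}_M\leq\sum_{j=1}^d\tnorm{f_j}_{M_{jj}}$; and you justify $(D_M)_{ij}f_j\overline{g_i}\in\calL^1(\tr_M)$ by the pointwise matrix Cauchy--Schwarz inequality $|(D_M(t))_{ij}|\leq(D_M(t))_{ii}^{1/2}(D_M(t))_{jj}^{1/2}$ plus scalar Schwarz in $\calL^2(\tr_M)$, where the paper invokes the already-established measure-level version (Proposition~\ref{prop:III:22}, built from Lemmas~\ref{lem:III:23} and~\ref{lem:III:24}). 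Both routes are valid; yours is marginally more self-contained, the paper's reuses its existing machinery. Your treatment of assertion 3) coincides with the paper's.
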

\begin{proof}
Suppose that $f,g \in \calL^2_\Sigma(M)$ . We have
\[
	\dScalP{f,g}_\Sigma =
	\sum_{i,j=1,\ldots,d} \int_{\Omega} f_j \overline{g_i} \ud M_{ij} =
	\sum_{i,j=1,\ldots,d} \int_{\Omega} \big( D_M(t) \big)_{ij} f_j(t) \overline{g_i(t)} \ud \tr_M(t).
\]
In particular (by Proposition~\ref{prop:III:22}) for any $i,j$ 
\[
	\big( D_M(\cdot) \big)_{ij} \cdot f_j \overline{g_i} \in \calL^1(\tr_M),
\] 
so the sum over all $i,j$ too, and
\begin{equation}
	\label{eq:III:4.10}
	\begin{split}
		\dScalP{f,g}_\Sigma 
		&=
		\int_{\Omega} \sum_{i,j=1,\ldots,d} \big( D_M(t) \big)_{ij} f_j(t) \overline{g_i(t)} \ud \tr_M(t) \\
		&=
		\int_{\Omega} \langle D_M(t) f(t), g(t) \rangle_{\CC^d} \ud \tr_M(t).
	\end{split}
\end{equation}
In particular, for $f=g$: $\CC \ni \dScalP{f,f}_\Sigma = \dScalP{f}_M$, so $\dScalP{f}_M < \infty$ for $f \in \calL^2_\Sigma(M)$, i.e. "$\subset$" from 1) is proved, so \eqref{eq:III:2.3} and \eqref{eq:III:4.10} gives 2). Obviously $\calL^2_\Sigma$ is a linear space, because each $\calL^2(M_{ii})$ is. Hence "$\underset{\lin}{\subset}$" in 1) holds.

Now, to prove 3) it suffices to check that for any $\omega \in \frakM$ and $c \in \CC^d$ $\chi_\omega c \in \calL^2_\Sigma$. But for $j=1,\ldots,d$ $(\chi_\omega)_j = c_j \chi_\omega \in \calL^2(M_{jj})$ since $M_{jj}$ is a finite measure (see Proposition~\ref{prop:III:2}d)).
\end{proof}

\begin{remark} \label{rem:III:26}
We see in particular, that the idea to define a semi-scalar product by \eqref{eq:III:4.8} leads just to the product $\dScalP{\cdot,\cdot}_M$, which we knew before and to the space $\calL^2_\Sigma(M)$ being "only" some part of our "main function space" $\calL^2(M)$.

But there is one advantage of the construction $\calL^2_\Sigma(M)$ made above: the definition of the semi-scalar product for the elements of $\calL^2_\Sigma(M)$ can be done without the use of the trace density $D_M$ for $M$.
\end{remark}

Now, analogically as for $S(M)$ we define
\[
	L^2_\Sigma(M) := 
	\pi_M \big( \calL^2_\Sigma(M) \big) =
	\big\{ [f] \in L^2(M) : f \in \calL^2_\Sigma(M) \big\}.
\]
So, by Fact~\ref{fact:III:25}
\begin{equation}
	\label{eq:III:4.11}
	S(M) \underset{\lin}{\subset} L^2(M).
\end{equation}

We end this part by the density result announced at the beginning of this subsection.

\begin{fact} \label{fact:III:27}
Both $S(M)$ and $L^2_\Sigma(M)$ are dense subspaces of $L^2(M)$.
\end{fact}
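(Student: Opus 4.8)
The plan is to establish density of the smaller subspace $S(M)$; since $S(M)\subseteq L^2_\Sigma(M)\subseteq L^2(M)$ (the first inclusion is $\pi_M$ applied to Fact~\ref{fact:III:25}, the second is the definition of $L^2_\Sigma(M)$), density of $S(M)$ in $L^2(M)$ at once yields that of $L^2_\Sigma(M)$. Moreover $\pi_M$ is linear with $\|\pi_M(f)\|_M=\tnorm{f}_M$ by \eqref{eq:III:3.2}, so it carries every $\tnorm{\cdot}_M$-dense subset of $\calL^2(M)$ onto a $\|\cdot\|_M$-dense subset of $L^2(M)=\pi_M(\calL^2(M))$. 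Hence it suffices to show that $\calS(\frakM)$ is $\tnorm{\cdot}_M$-dense in $\calL^2(M)$. Two facts will do all the work: $0\le D_M(t)\le I$ for $\tr_M$-a.e.\ $t$ (Proposition~\ref{prop:III:5}), and $\tr_M(\Omega)<\infty$.

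So fix $f\in\calL^2(M)$ and $\varepsilon>0$. First I would truncate: with $A_k:=\{t\in\Omega:\|f(t)\|_{\CC^d}\le k\}\in\frakM$ one has $\chi_{A_k}f\in\calL^2(M)$ and, by \eqref{eq:III:2.1},
\[
	\tnorm{f-\chi_{A_k}f}_M^2=\int_{\Omega\setminus A_k}\langle D_M(t)f(t),f(t)\rangle_{\CC^d}\ud\tr_M(t),
\]
which tends to $0$ as $k\to\infty$ by dominated convergence, the majorant $t\mapsto\langle D_M(t)f(t),f(t)\rangle_{\CC^d}$ lying in $\calL^1(\tr_M)$ with integral $\tnorm{f}_M^2$. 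Fix $k$ with $\tnorm{f-\chi_{A_k}f}_M<\varepsilon/2$ and set $h:=\chi_{A_k}f$, a bounded $\frakM$-measurable function with $\|h(t)\|_{\CC^d}\le k$ for all $t$. Next I would approximate $h$ uniformly by a vector simple function: take a finite Borel partition $E_1,\dots,E_m$ of the ball $\{z\in\CC^d:\|z\|_{\CC^d}\le k\}$ into pieces of diameter $<\delta$, pick $z_i\in E_i$, and put $g:=\sum_{i=1}^m z_i\,\chi_{h^{-1}(E_i)}\in\calS(\frakM)$, so that $\|h(t)-g(t)\|_{\CC^d}<\delta$ for every $t$. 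Then, using $D_M(t)\le I$,
\[
	\tnorm{h-g}_M^2\le\int_\Omega\|h(t)-g(t)\|_{\CC^d}^2\ud\tr_M(t)\le\delta^2\,\tr_M(\Omega),
\]
which is $<\varepsilon^2/4$ once $\delta$ is small enough, because $\tr_M(\Omega)<\infty$. Thus $\tnorm{f-g}_M<\varepsilon$ with $g\in\calS(\frakM)$, so $\calS(\frakM)$ is $\tnorm{\cdot}_M$-dense in $\calL^2(M)$, which finishes the proof.

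The one delicate point, and the place I expect the only real difficulty, is that $\tnorm{\cdot}_M$ controls only the transformed function $\sqrt{D_M}f$ and not $f$ itself --- indeed $\|f(\cdot)\|_{\CC^d}$ need not even be $\tr_M$-integrable --- so one cannot simply invoke density of simple functions in an $L^2$-space for $f$ directly. The truncation step is exactly the device that repairs this: on the bounded function $h$ one has $\tnorm{h-g}_M\le\big(\int_\Omega\|h-g\|_{\CC^d}^2\,\ud\tr_M\big)^{1/2}$, and since $\tr_M(\Omega)<\infty$ the problem collapses to a uniform approximation on a compact ball in $\CC^d$. (Alternatively one could push the question through the unitary $\hat{\calF}$ of Fact~\ref{fact:III:19}, where $\hat{\calF}(S(M))$ is the image in $L^2_D$, under pointwise multiplication by $\sqrt{D_M}$, of a dense subset of $\big(L^2(\tr_M)\big)^d$, and then use $\overline{\Ran T}=(\Ker T^*)^\perp$ for that self-adjoint contraction $T$ together with the pointwise identity $(\Ker T)^\perp=L^2_D$; but this variant still relies on the finiteness of $\tr_M$, so the direct argument is the more economical one.)
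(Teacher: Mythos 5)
Your proof is correct, but it takes a genuinely different route from the paper's. The paper argues by duality: it takes $[f]\in S(M)^{\perp}$, notes that $\int_{\omega}\langle D_M(t)f(t),c\rangle_{\CC^d}\ud\tr_M(t)=0$ for all $\omega\in\frakM$ and $c\in\CC^d$, concludes via the standard ``vanishing integral over every set'' fact and a countable dense set of $c$'s that $D_M(t)f(t)=0$ for $\tr_M$-a.e.\ $t$, and then invokes the zero-layer characterization (Fact~\ref{fact:III:12}) to get $[f]=0$; note that the step ``$S(M)^{\perp}=\{0\}$ implies density'' silently uses the completeness of $L^2(M)$ established in Fact~\ref{fact:III:19}. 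You instead give a direct constructive approximation: truncation of $f$ to the sets $\{\|f(t)\|_{\CC^d}\le k\}$ (justified by dominated convergence with majorant $\gamma_f\in\calL^1(\tr_M)$), followed by uniform approximation of the bounded truncation by vector simple functions, with the error controlled through $0\le D_M(t)\le I$ (Proposition~\ref{prop:III:5}) and the finiteness of $\tr_M$. Your opening observation is also the right one: $\tnorm{\cdot}_M$ does not control $\|f(\cdot)\|_{\CC^d}$ in any integral sense, so one cannot quote scalar $L^2$ density verbatim, and the truncation is exactly what circumvents this. What each approach buys: the paper's is shorter given the machinery already in place (Lemma~\ref{lem:III:9}, Fact~\ref{fact:III:12}) but is nonconstructive and leans on completeness; yours is elementary, yields explicit approximants with a quantitative error bound $\delta^2\tr_M(\Omega)$, and would survive even without knowing $L^2(M)$ is complete. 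Both are sound.
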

\begin{proof}
By \eqref{eq:III:4.11} we need only to prove that $\overline{S(M)} = L^2(M)$, i.e., that
\begin{equation}
	\label{eq:III:4.12}
	S(M)^\perp = \{0\}.
\end{equation}
Let $[f] \in S(M)^\perp$; $[VCF] \subset S(M)$, so
\begin{equation}
	\label{eq:III:4.13}
	\forall_{c \in \CC^d} \
	\forall_{\omega \in \frakM} \
	0 = 
	\langle [f], [\chi_\omega c] \rangle_M =
	\dScalP{f, \chi_\omega c}_M =
	\int_\omega \langle D_M(t) f(t), c \rangle_{\CC^d} \ud \tr_M(t).
\end{equation}
For $c \in \CC^d$ consider $\gamma_c : \Omega \to \CC$ given by
\[
	\gamma_c(t) := \langle D_M(t) f(t), c \rangle_{\CC^d}, \quad
	t \in \Omega.
\]
By Lemma~\ref{lem:III:9} we know that $\gamma_c \in \calL^1(\tr_M)$ and by \eqref{eq:III:4.13}
\begin{equation}
	\forall_{\omega \in \frakM} \
	\int_\omega \gamma_c \ud \tr_M = 0,
\end{equation}
and by the well-known fact from measure theory we get $\gamma_c(t) = 0$ for $\tr_M$-a.e. $t \in \Omega$. Hence, defining $Z_c := \{ t \in \Omega : \gamma_c(t) \neq 0 \}$ we get $\tr_M(Z_c) = 0$ for any $c \in \CC^d$. Choose any $\{ c_n \}_{n \geq 1}$ in $\CC^d$ such, that $\{ c_n : n \in \NN \}$ is dense in $\CC^d$ and let $Z = \bigcup_{n \geq 1} Z_{c_n}$. If $t \in \Omega \setminus Z = \bigcap_{n \geq 1} (\Omega \setminus Z_{c_n})$, then
\[
	\forall_{n \in \NN} \
	0 = \gamma_{c_n}(t) = \langle D_M(t) f(t), c_n \rangle_{\CC^d},
\]
so, by the density in $\CC^d$, $D_M(t) f(t) = 0$. But $\tr_M(Z) = 0$ ($Z$ is a countable sum of $\tr_M$-zero sets), hence we proved, that
\[
	f(t) \in \Ker D_M(t) \text{ for } \tr_M \text{-a.e. } t \in \Omega.
\]
Thus means, that $[f] = 0$ by Fact~\ref{fact:III:12}, i.e. \eqref{eq:III:4.12} holds.
\end{proof}

It is worth noting, that in some papers the definition of the space $L^2(M)$ is not rigorous enough. For instance, in some of them the authors define "only" $L^2_\Sigma(M)$ (or even only $\calL^2_\Sigma(M)$) instead of $L^2(M)$. Observe, that $L^2(M)$, itself, is not a Hilbert space often. It cannot be, as a dense subspace of a Hilbert space $L^2(M)$, as long as it is not the whole $L^2(M)$. -- For some $M$ it is true that $L^2_\Sigma(M) = L^2(M)$ (e.g. always for $d=1$ ...), but not for any $M$!

\vspace{5ex}

\subsection{Multiplication operators in $L^2(M)$} \label{sec:III:5}
Multiplication operators by measurable functions in $L^2(\mu)$, for measures $\mu$ are basic operators for Spectral Theory. It is natural to try to find an analog of such kind of operators for matrix measure $L^2$-spaces. 

So, assume, as usual, that $M$ is a $d \times d$ matrix measure $\frakM$ -- a $\sigma$-algebra of subsets of $\Omega$, and let us think first about some transformations of $\calL^2(M)$, which could be possibly treated as "natural generalisations" of multiplications by scalar function $F$ in $\calL^2(\mu)$. The first -- "most general" guess is a "multiplication" by a measurable matrix function $A : \Omega \to \Mat{d}{\CC}$ given by the formula
\[
	\calL^2(M) \underset{\lin}{\supset} \calD \ni f \mapsto Af \in \calL^2(M), \quad
	\text{where }
	(Af)(t) = A(t) f(t).
\]
But here an important problem arises! -- When we want to define the appropriate quotient factorization of such a transformation, we have to be sure, that for $f \in \calL_0(M)$ also $Af \in \calL_0(M)$. That is, assuming that $D_M(t) f(t) = 0$ for $\tr_M$-a.e. $t \in \Omega$, we would like to obtain $D_M(t) A(t) f(t) = 0$ for $\tr_M$-a.e. $t \in \Omega$. For general case of matrix measures $M$, i.e., quite a general class of different trace densities $D_M : \Omega \to \Mat{d}{\CC}$, it would be difficult to get this. -- Something $\pm$ like commutativity of $D_M(t)$ and $A(t)$ seems to be necessary. So -- the form of $A(t)$ should be, in general, "very simple". Like $A(t) = F(t) I$, with a scalar function $F$...

Let $F : \Omega \to \CC$ be a $\frakM$-measurable function. We define the \emph{multiplication by $F$ operator} $T_F$ in $L^2(M)$ as follows\footnote{\label{fn:III:19} for $f : \Omega \to \CC^d$ $F f : \Omega \to \CC$ and $(F f)(t) = F(t) f(t)$ for $t \in \Omega$.} (note, that we extend the meaning of the symbol "$T_F$" onto all the $L^2$ matrix spaces)
\begin{align}
	\label{eq:III:5.1}
	&\Dom(T_F) := \big\{ [f] \in L^2(M) : F f \in \calL^2(M) \big\} \\
	\nonumber
	&T_F[f] := [F f] \quad
	\text{for } [f] \in \Dom(T_F).
\end{align}
We should check, that this is a proper definition of a linear operator $T_F : \Dom(T_F) \to L^2(M)$. So observe first that the related operator $\calT_F$ in $\calL^2(M)$ with the domain $\calD := \big\{ f \in \calL^2(M) : F f \in \calL^2(M) \big\}$, given by $\calT_{F} f = F f$ for $f \in \calD$ is linear, and if $f \in \calL_0^2(M)$, then $D_M(t) f(t) = 0$ for $\tr_M$-a.e. $t \in \Omega$, so, the same is true for $F(t) \cdot D_M(t) f(t) = D_M(t) \big( F(t) f(t) \big)$, which means that\footnote{\label{fn:III:20} Hence, in particular: $\calL_0^2(M) \subset \calD$, and $\calT_F \big( \calL^2_0(M) \big) \subset \calL^2_0(M)$.} $F f \in \calL^2_0(M)$ (see Fact~\ref{fact:III:12}). Hence $[F f]$ does not depend on the choice of $f$ for $[f]$, and $T_F$ is linear.

Let us recall the notion of \emph{the essential value set} $\VE_\mu(F)$ \emph{with respect to} a measure $\mu$ on $\frakM$. We define first its complement:
\[
	\VNE_\mu(F) := \bigcup 
	\big\{ U \subset \CC : U \text{ -- open, } \mu \big( F^{-1}(U) \big) = 0 \big\}
\]
so $\VE_\mu(F) := \CC \setminus \VNE_\mu(F)$.

We repeat the analogous definition for the matrix measure $M$, just by replacing $\mu$ by $M$ above, and we get $\VE_M(F)$, being a kind of ansatz of the set $F(\Omega)$ of all the values of $F$, containing just the "topologically important values of $F$ from the point of view of the matrix measure $M$"\footnote{\label{fn:III:21}Note that, in particular, $\overline{F(\Omega)} \supset \VE_M(F)$, because $\CC \setminus \overline{F(\Omega)} \subset \VNE_M(F)$.} (note that $\VE_M(F)$ is always a closed set in $\CC$, as a complement of an open $\VNE_M(F)$).

But note also, that we do not need to make special studies of essential value set with respect to matrix measures -- we can use just the well-known properties of this classical notion for measures! Indeed: we know that $M(\omega) = 0$ for $\omega \in \frakM$ iff $\tr_M(\omega) = 0$ (see Remark~\ref{rem:III:8}). Hence
\begin{equation}
	\label{eq:III:5.2}
	\VE_M(F) = \VE_{\tr_M}(t)
\end{equation}
(and analogically for $\VNE_M$ ...). In particular, by the Lindel\"of property of the (metric + separable) space $\VNE_M(F)$
\begin{equation}
	\label{eq:III:5.2'}
	\tr_M \Big( F^{-1} \big( \VNE_M(F) \big) \Big) = 0 \quad \text{and} \quad
	M \Big( F^{-1} \big( \VNE_M(F) \big) \Big) = 0.
\end{equation}

We are ready now to formulate the theorem describing some basic operator and spectral properties of multiplication operators in $L^2(M)$ space.

\begin{theorem} \label{thm:III:28}
Assume that $F, G : \Omega \to \CC$ are $\frakM$-measurable. Then:
\begin{enumerate}[1)]
\item $T_F$ is densely defined and closed;

\item $\forall_{\lambda \in \CC \setminus \{ 0 \}} \ T_{\lambda F} = \lambda T_F$ (and\footnote{\label{fn:III:22} $0$ denotes also the zero operator in $L^2(M)$ (with $\Dom(0) = L^2(M)$).} $T_0 = 0$);

\item
	\begin{enumerate}[a)]
	\item $T_F = 0$ iff\footnote{\label{fn:III:23} $\subsetneq$ can happen only when $M(\Omega) = 0$, which means that $\VE_M(F) = \emptyset$.} $\VE_M(F) \subset \{0\}$,
	\item $T_F = T_G$ iff\footnote{\label{fn:III:24} Note the important difference between the condition "$[f] = [g]$" and for $T_F = T_G$!} $F(t) = G(t)$ for $\tr_M$-a.e. $t \in \Omega$.
	\end{enumerate}
	
\item $T_F \in \calB \big( L^2(M) \big) \Leftrightarrow \VE_M(F)$ is bounded. Moreover for $T_F \in \calB \big( L^2(M) \big)$
\begin{equation}
	\label{eq:III:5.3}
	\| T_F \| = \sup \{ |\lambda| : \lambda \in \VE_M(F) \} =: \sup |\VE_M(F)|,
\end{equation}
provided that $L^2(M) \neq \{0\}$.

\item $T_F^* = T_{\overline{F}}$ and $\Dom(T_F^*) = \Dom(T_F)$;

\item $T_F T_G \subset T_{F G}$ and $\Dom(T_F T_G) = \Dom(T_{FG}) \cap \Dom(T_G)$. In particular 
\begin{equation}
	\label{eq:III:5.4}
	T_F T_G = T_{FG} \Leftrightarrow \Dom(T_{FG}) \subset \Dom(T_{G}). 
\end{equation}
\noindent If we assume additionally that $L^2(M) \neq \{ 0 \}$, then:
\item $\sigma(T_F) = \VE_M(F) = \VE_{\tr_M}(F)$;

\item $\sigmaP(T_F) = \big\{ \lambda \in \CC : \tr_M \big( F^{-1}(\{\lambda\}) \big) \neq 0 \big\}$;

\item If $\lambda_0 \in \rho(T_F)$, then
\begin{equation}
	\label{eq:III:5.5.1}
	(T_F - \lambda_0 I)^{-1} = T_H,
\end{equation}
where $H : \Omega \to \CC$ is any measurable function satisfying
\begin{equation}
	\label{eq:III:5.5.2}
	H(t) \big( F(t) - \lambda_0 \big) = 1 \quad \text{for} \tr_M \text{-a.e. } t \in \Omega.
\end{equation}

\item If $F(\Omega) \subset \RR$ then the operator function on the Borel subsets class $\Bor(\RR)$ on $\RR$ given by
\begin{align*}
	&E_{F,M} : \Bor(\RR) \to \calB \big( L^2(M) \big), \\
	&E_{F,M}(\omega) := T_{\chi_{F^{-1}(\omega)}} \quad \text{for } \omega \in \Bor(\RR)
\end{align*}
is the (projection valued) spectral measure (i.e. the resolution of identity) for $T_F$.
\end{enumerate}
\end{theorem}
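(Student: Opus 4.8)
The plan is to reduce every assertion to the classical scalar case ($d=1$, i.e.\ multiplication operators on $L^2(\mu)$ for a finite measure $\mu$), for which all ten statements are standard, and the natural tool for this is the unitary map $\hat{\calF}\colon L^2(M)\to L^2_D$ of Fact~\ref{fact:III:19}, where $\mu:=\tr_M$, $D:=D_M$ (chosen with $D(t)\ge 0$ for every $t$) and $L^2_D$ is the closed subspace of $\big(L^2(\mu)\big)^d$ from \eqref{eq:III:3.11}. The first step is the elementary pointwise identity $\calF(Ff)(t)=\sqrt{D(t)}\,F(t)f(t)=F(t)\,(\calF f)(t)$, valid for every $f\in\calL^2(M)$, which yields
\[
	\hat{\calF}\,T_F=\bigl(\restr{T_F^{(d)}}{L^2_D}\bigr)\,\hat{\calF},
\]
where $T_F^{(d)}:=\bigoplus_{j=1}^d T_F$ is the ``diagonal'' operator of multiplication by $F$ on $\big(L^2(\mu)\big)^d$, i.e.\ multiplication by the matrix function $F(\cdot)I$. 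Next one observes that $L^2_D$ \emph{reduces} $T_F^{(d)}$: the orthogonal projection of $\big(L^2(\mu)\big)^d$ onto $L^2_D$ is precisely multiplication by the measurable projection-valued function $P(\cdot)$, where $P(t)$ is the orthogonal projection of $\CC^d$ onto $\Ran D(t)$ (whose measurability is exactly what was established around \eqref{eq:III:3.18}), and $P(t)$ commutes with the scalar matrix $F(t)I$. Hence $T_F$ is unitarily equivalent, via $\hat{\calF}$, to the part of $T_F^{(d)}$ in its reducing subspace $L^2_D$.

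From the reduction, the ``algebraic'' items are transfers of the corresponding scalar facts. For a finite measure $\mu$ the scalar $T_F$ on $L^2(\mu)$ is densely defined and closed, with $\Dom(T_F)=\Dom(T_{\overline F})$ and $T_F^*=T_{\overline F}$; the same holds for $T_F^{(d)}$ (an orthogonal direct sum of $d$ copies); and the part of an operator in a reducing subspace inherits closedness, density of domain (a bounded projection maps the dense domain onto a dense subset of $L^2_D$), the identity $(T_K)^*=(T^*)_K$, and the product relation $T_F^{(d)}T_G^{(d)}\subset T_{FG}^{(d)}$ with domain $\Dom(T_{FG}^{(d)})\cap\Dom(T_G^{(d)})$. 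This gives 1), 5) and 6); item 2) and $T_0=0$ are immediate from the definition \eqref{eq:III:5.1}.

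For the ``spectral'' items 3), 4), 7)--10) one repeats the standard $d=1$ arguments, now using the vector characteristic functions and the identity \eqref{eq:III:4.3}, $\tnorm{\chi_\omega c}_M^2=\langle M(\omega)c,c\rangle_{\CC^d}$, together with the key equivalences $\tr_M(\omega)=0\Leftrightarrow M(\omega)=0$ (Remark~\ref{rem:III:8}) and $\VE_M(F)=\VE_{\tr_M}(F)$ \eqref{eq:III:5.2}, and the fact $\tr_M(\{D_M=0\})=0$. Thus: for 3a), $T_F=0$ means $F(t)D_M(t)f(t)=0$ $\tr_M$-a.e.\ for all $f\in\calL^2(M)$, so testing with $f=\chi_\omega c$ and varying $\omega,c$ gives $F D_M=0$ $\tr_M$-a.e., hence $F=0$ $\tr_M$-a.e., i.e.\ $\VE_{\tr_M}(F)\subset\{0\}$; 3b) is the same applied to $F-G$, after truncating to $\{|F|,|G|\le n\}$ to stay inside $\Dom(T_F)\cap\Dom(T_G)$. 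For 4), 7) and 9): if $\lambda_0\notin\VE_{\tr_M}(F)$, one takes any measurable $H$ with $H(F-\lambda_0)=1$ $\tr_M$-a.e.\ (it is $\tr_M$-essentially bounded), notes $T_H\in\calB(L^2(M))$ by 4), and checks $T_H=(T_F-\lambda_0 I)^{-1}$ using 6); conversely, for $\lambda\in\VE_{\tr_M}(F)$ and $\omega_n:=F^{-1}(B(\lambda,1/n))$ one has $\tr_M(\omega_n)>0$, hence $M(\omega_n)\neq 0$, hence $\langle M(\omega_n)c_n,c_n\rangle_{\CC^d}>0$ for some $c_n\in\CC^d$, and $[\chi_{\omega_n}c_n]$ (normalized) is then a Weyl sequence for $T_F-\lambda I$, which gives $\lambda\in\sigma(T_F)$ and the lower bound $\|T_F\|\ge\sup|\VE_{\tr_M}(F)|$ (the reverse inequality and the characterization of boundedness being the easy scalar estimates). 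For 8), $\lambda\in\sigmaP(T_F)$ iff $[\chi_{F^{-1}(\{\lambda\})}c]\neq 0$ for some $c$, iff $M(F^{-1}(\{\lambda\}))\neq 0$, iff $\tr_M(F^{-1}(\{\lambda\}))\neq 0$; here off $\{F=\lambda\}$ the relation $D_M(t)(F(t)-\lambda)f(t)=0$ forces $D_M(t)f(t)=0$, since $F(t)-\lambda$ is then a nonzero scalar. Finally, for 10) with $F$ real-valued, $T_F=T_F^*$ is self-adjoint by 5); one verifies that $\omega\mapsto T_{\chi_{F^{-1}(\omega)}}$ is a resolution of identity (projections by 5)--6); $T_{\chi_\Omega}=I$; countable additivity in the strong operator topology by dominated convergence, the dominating function being $t\mapsto\langle D_M(t)f(t),f(t)\rangle_{\CC^d}\in\calL^1(\tr_M)$; multiplicativity by 6)), whose scalar measures $E_{F,M,[f]}:=\langle E_{F,M}(\cdot)[f],[f]\rangle_M$ are the pushforwards $F_*\bigl(\langle D_M f,f\rangle_{\CC^d}\,\ud\tr_M\bigr)$; consequently $\int_\RR|\xx|^2\,\ud E_{F,M,[f]}=\dScalP{Ff}_M$ and $\int_\RR\xx\,\ud E_{F,M,[f]}=\langle T_F[f],[f]\rangle_M$, so $\int_\RR\xx\,\ud E_{F,M}$ is self-adjoint with the same domain and quadratic form as $T_F$, whence $E_{F,M}=E_{T_F}$.

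The two points needing genuine care are: (i) verifying that $L^2_D$ really reduces $T_F^{(d)}$ --- equivalently, that the orthogonal projection onto $L^2_D$ is measurable multiplication by $P(\cdot)$ --- so that the reduction machinery applies; and (ii) the ``$\supseteq$'' directions in 4), 7) and 8), where one must produce honest \emph{nonzero} vectors of $L^2(M)$ witnessing the essential values of $F$. The subtlety in (ii), absent when $d=1$, is that a vector characteristic function $\chi_\omega c$ with $\tr_M(\omega)>0$ may nonetheless be null in $L^2(M)$ (Example~\ref{example:III:15}); this is precisely why one chooses $c$ with $\langle M(\omega)c,c\rangle_{\CC^d}>0$, and why the equivalence $M(\omega)=0\Leftrightarrow\tr_M(\omega)=0$ of Remark~\ref{rem:III:8} is the load-bearing fact throughout.
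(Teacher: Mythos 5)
Your proposal is correct, but for the ``structural'' items it follows a genuinely different route from the paper. The paper proves 1), 5), 6) \emph{intrinsically} in $L^2(M)$: it introduces the scalar functions $\gamma_{f,g}(t)=\langle D_M(t)f(t),g(t)\rangle_{\CC^d}$, records the algebraic identities $\gamma_{Ff,g}=\gamma_{f,\overline F g}=F\gamma_{f,g}$, and then runs the classical truncation argument with $\omega_n=\{|F|\le n\}$ to get density of the domain and $T_F^*=T_{\overline F}$ by hand; closedness then falls out of 5). You instead push everything through the unitary $\hat{\calF}\colon L^2(M)\to L^2_D\subset\bigl(L^2(\tr_M)\bigr)^d$ of Fact~\ref{fact:III:19}, observe that multiplication by the measurable projection field $P(\cdot)$ (already constructed in the proof of Fact~\ref{fact:III:19} as $P(t)=\sqrt{D(t)}\,G(D(t))$) is the orthogonal projection onto $L^2_D$, that $P(t)$ commutes with the scalar $F(t)I$, and hence that $T_F$ is the part of the diagonal operator $T_F^{(d)}$ in a reducing subspace; items 1), 5), 6) then transfer from the scalar theory via the standard facts about parts of operators in reducing subspaces ($(T)_K$ closed and densely defined, $\bigl((T)_K\bigr)^*=(T^*)_K$, products restrict with the right domains). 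What your route buys is economy and a clean conceptual picture (everything is a compression of a direct sum of scalar multiplication operators); what the paper's route buys is self-containedness --- it never needs the reduction theory for unbounded operators, and the $\gamma_{f,g}$ calculus it sets up is reused verbatim in part 10) and later in Appendix~\ref{sec:C}. For the spectral items 3), 4), 7)--10) your argument and the paper's essentially coincide: both hinge on the vector characteristic functions with $\tnorm{\chi_\omega c}_M^2=\langle M(\omega)c,c\rangle_{\CC^d}$, on the equivalence $M(\omega)=0\Leftrightarrow\tr_M(\omega)=0$ of Remark~\ref{rem:III:8} together with $\VE_M(F)=\VE_{\tr_M}(F)$, and, for 10), on identifying $E_{F,M,[f]}$ with the pushforward under $F$ of $\gamma_f\ud\tr_M$. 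Your handling of the two genuinely delicate points --- the measurability of $P(\cdot)$ needed for the reduction, and the choice of $c$ with $\langle M(\omega)c,c\rangle_{\CC^d}>0$ so that the witnessing vectors are actually nonzero in $L^2(M)$ --- is exactly where the paper also concentrates its care, and your fix for 3b) (truncating to $\{|F|,|G|\le n\}$ before comparing, rather than the paper's closedness argument for $T_{F-G}$) is a valid alternative.
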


Observe, that this long formulation is a matrix measure analog of the appropriate classical result for multiplication by function operators in $L^2(\mu)$ with a measure $\mu$. Also the proof is similar.

\begin{proof}
Let us recall the notation $\gamma_{f,g}$ from Section~\ref{sec:III:2}. For any $f,g : \Omega \to \CC^d$ (previously we assumed that $f,g \in \calL^2(M)$, but it is not necessary for the definition only) $\gamma_{f,g} : \Omega \to \CC$ and
\begin{equation}
	\label{eq:III:5.6}	
	\begin{split}
		&\gamma_{f,g}(t) := \langle D_M(t) f(t), g(t) \rangle_{\CC^d}, \quad
		t \in \Omega \\
		&\gamma_f := \gamma_{f,f}.
	\end{split}
\end{equation}
Observe that:
\begin{equation}
	\label{eq:III:5.7}	
	\begin{split}
		&\gamma_{f,g} = \overline{\gamma_{g,f}} \\
		&\gamma_{F \cdot f} = |F|^2 \gamma_{f} \\
		&\gamma_{Ff,g} = \gamma_{f, \overline{F}g} = F \gamma_{f,g}
	\end{split}
\end{equation}
in particular, for any $\omega \subset \Omega$
\begin{equation}
	\label{eq:III:5.8}
	\gamma_{|\chi_\omega f|} = \chi_\omega \gamma_f.
\end{equation}

Now, we shall say something about the proofs of each point of the theorem.

\fbox{Ad. 1)} Take $f \in \calL^2(M)$ and for any $n \in \NN$ consider
\[
	\omega_n := \{ t \in \Omega : |F(t)| \leq n \}.
\]
Recall that for any measurable $g : \Omega \to \CC^d$
\begin{equation}
	\label{eq:III:5.9}
	\dScalP{g}_M = \int_{\Omega} \gamma_g \ud \tr_M
\end{equation}
Now, using also \eqref{eq:III:5.6}--\eqref{eq:III:5.8}, we can easily check that $f_n := \chi_{\omega_n} \cdot f$ for any $n \in \NN$ belongs to $\Dom(T_M)$, and by the Lebesgue dominated convergence theorem
\[
	\| [f] - [f_n] \|^2_M \to 0
\]
so in particular $\Dom(T_M)$ is dense. And the closedness we get later, as a consequence of 5) (the adjoint operator is always closed).

\fbox{Ad. 5} Take any $f,g \in \calL^2(M)$ such, that $[f] \in \Dom(T_{\overline{F}})$, $g \in \Dom(T_F)$ (obviously -- by \eqref{eq:III:5.9} and \eqref{eq:III:5.7} -- these two domains are equal). For any $h_1, h_2 \in \calL^2(M)$ we have
\begin{equation}
	\label{eq:III:5.10}
	\big\langle [h_1], [h_2] \big\rangle_M = 
	\int_\Omega \gamma_{h_1, h_2} \ud \tr_M,
\end{equation}
so, by \eqref{eq:III:5.7}
\begin{equation}
	\big\langle T_F [g], [f] \big\rangle_M =
	\int_\Omega \gamma_{Fg,f} \ud \tr_M =
	\int_\Omega \gamma_{g, \overline{F}f} \ud \tr_M =
	\big\langle [g], T_{\overline{F} [f]} \big\rangle_M,
\end{equation}
and hence $T_{\overline{F}} \subset (T_F)^*$.

Assume now, that $[f] \in \Dom \big( (T_F)^* \big)$, so, let's fix $h \in \calL^2(M)$ satisfying $[h] = (T_F)^* [f]$, i.e.:
\[
	\forall_{[g] \in \Dom(T_F)} \
	\big\langle T_F[g], [f] \big\rangle_M =
	\big\langle [g], [h] \big\rangle_M.
\]
In particular, we can use this for $g_n$ of the form $g_n = \chi_{\omega_n} \cdot g$ for the previously defined $\omega_n$'s, $n \in \NN$, and any $g \in \calL^2(M)$:
\begin{equation}
	\label{eq:III:5.11}
	\big\langle T_F[g_n], [f] \big\rangle_M =
	\big\langle [g_n], [h] \big\rangle_M, \quad
	n \in \NN, g \in \calL^2(M).
\end{equation}
But using again \eqref{eq:III:5.7} and \eqref{eq:III:5.10} we get
\[
	\big\langle T_F[g_n], [f] \big\rangle_M =
	\big\langle [g], [\chi_{\omega_n} \cdot (\overline{F} f)] \big\rangle_M
\]
and
\[
	\big\langle g_n, [h] \big\rangle_M =
	\big\langle [g], [\chi_{\omega_n} \cdot h] \big\rangle_M,
\]
so, by \eqref{eq:III:5.1} and the arbitrariness of $[g] \in L^2(M)$ we obtain:
\begin{equation}
	\label{eq:III:5.12}
	\forall_{n \in \NN} \
	\big[ \chi_{\omega_n} \cdot (h - \overline{F}f) \big] = 0.
\end{equation}

We shall prove that $(h - \overline{F}f) \in \calL^2_0(M)$. By Fact~\ref{fact:III:12}, we have to check, that $\tr_M (\tilde{\Omega}) = 0$, where
\[
	\tilde{\Omega} := 
	\big\{ t \in \Omega: (h - \overline{F}f) \notin \Ker D_M(t) \big\}.
\]
But \eqref{eq:III:5.12} means that $\tr_M (\tilde{\Omega} \cap \omega_n) = 0$, for any $n \in \NN$, so
\[
	0 = 
	\tr_M \bigg( \bigcup_{n \in \NN} \big( \tilde{\Omega} \cap \omega_n \big) \bigg) =
	\tr_M(\tilde{\Omega}).
\]
Finally $h - \overline{F}f$, $h \in \calL^2(M)$, hence also $\overline{F}f \in \calL^2(M)$ and $[\overline{F}f] = [h] = (T_F)^*[f]$, i.e. also $(T_F)^* \subset T_{\overline{F}}$.

So, the proofs of 1) and 5) are ready. Part 2) is trivial, however it is good to remember "the jump" in the domain at $\lambda=0$.

We shall use below the following notation for a function $F$, $\varepsilon > 0$, $\lambda \in \CC$ and\footnote{\label{fn:III:25} $B(\lambda, \varepsilon) := \{ z \in \CC : |\lambda - z| < \epsilon \}$.} $c \in \CC^d$:
\begin{equation}
	\label{eq:III:5.13}
	\beta_{F, \lambda, \varepsilon, c} := \chi_{F^{-1} \big( B(\lambda, \varepsilon) \big)} \cdot c.
\end{equation}
By Fact~\ref{fact:III:21} we obtain:
\begin{fact} \label{fact:III:29}
If $F : \Omega \to \CC$ is $\frakM$-measurable then:
\begin{enumerate}[(1)]
\item for any $\varepsilon > 0$, $\lambda \in \CC$, $c \in \CC^d$ $\beta_{F, \lambda, \varepsilon,c} \in \calL^2(M)$ and
\begin{equation}
	\label{eq:III:5.14}
	\| [\beta_{F, \lambda, \varepsilon,c}] \|^2_M =
	\Big\langle M \Big( F^{-1} \big( B(\lambda, \varepsilon) \big) \Big) c, c \Big\rangle_{\CC^d}
\end{equation}

\item if $\varepsilon$, $\lambda$, $c$ are such, that $c \notin \Ker \Big( F^{-1} \big( B(\lambda, \varepsilon) \big) \Big)$ then $\| [\beta_{F, \lambda, \varepsilon,c}] \|_M > 0$ and $r \in \RR$ can be chosen with $\| [\beta_{F, \lambda, \varepsilon,\tilde{c}}] \|_M = 1$ for $\tilde{c} = r c$;

\item $\omega \in \frakM$, and for any $t \in \omega$ $s \leq \big| F(t) \big| \leq S$ for some $s,S \in [0; +\infty)$, then $[\chi_\omega \cdot c] \in \Dom(T_F)$ for each $c \in \CC^d$, and 
\begin{equation}
	\label{eq:III:5.15}
	\begin{split}
		&s \| [\chi_\omega \cdot c] \|_M \leq 
		\| T_F [\chi_\omega \cdot c] \|_M \leq
		S \| [\chi_\omega \cdot c] \|_M, \\
		&\| [\chi_\omega \cdot c] \|_M =
		\sqrt{\langle M(\omega) c,c \rangle_{\CC^d}}.
	\end{split}
\end{equation}

\item for any $\varepsilon > 0$, $\lambda \in \CC$ and $c \in \CC^d$ $\beta_{F, \lambda, \varepsilon,c} \in \Dom(T_F)$.
\end{enumerate}
\end{fact}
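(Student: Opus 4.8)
The plan is to read everything off Fact~\ref{fact:III:21} together with the $\gamma$-calculus \eqref{eq:III:5.6}--\eqref{eq:III:5.9}, after the single observation that each $\beta_{F,\lambda,\varepsilon,c}$ is just a vector characteristic function. I would set $\omega:=F^{-1}\big(B(\lambda,\varepsilon)\big)$; since $F$ is $\frakM$-measurable and $B(\lambda,\varepsilon)$ is open, $\omega\in\frakM$, so $\beta_{F,\lambda,\varepsilon,c}=\chi_\omega c\in\VCF(\frakM)\subset\calL^2(M)$ by \eqref{eq:III:4.1}, which already settles the membership claim in~(1). The norm identity in~(1) is then immediate from \eqref{eq:III:4.3}: $\|[\beta_{F,\lambda,\varepsilon,c}]\|^2_M=\dScalP{\chi_\omega c}_M=\langle M(\omega)c,c\rangle_{\CC^d}$.

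For~(2), where the hypothesis is to be read as $c\notin\Ker M(\omega)$, I would note $M(\omega)\geq 0$ and apply Lemma~\ref{lem:III:13} to $A=M(\omega)$: then $c\notin\Ker M(\omega)$ forces $\langle M(\omega)c,c\rangle_{\CC^d}>0$, hence $\|[\beta_{F,\lambda,\varepsilon,c}]\|_M>0$ by~(1). Since $c\mapsto\beta_{F,\lambda,\varepsilon,c}=\chi_\omega\cdot c$ is $\CC$-linear, $\beta_{F,\lambda,\varepsilon,rc}=r\,\beta_{F,\lambda,\varepsilon,c}$, so choosing the positive real $r:=\|[\beta_{F,\lambda,\varepsilon,c}]\|_M^{-1}$ and $\tilde c:=rc$ gives $\|[\beta_{F,\lambda,\varepsilon,\tilde c}]\|_M=1$.

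For~(3) I would use \eqref{eq:III:5.7}--\eqref{eq:III:5.8} to compute $\gamma_{F\cdot(\chi_\omega c)}=|F|^2\gamma_{\chi_\omega c}=|F|^2\chi_\omega\gamma_c$, where $\gamma_c(\cdot)=\langle D_M(\cdot)c,c\rangle_{\CC^d}\geq 0$ for $\tr_M$-a.e. point; then by \eqref{eq:III:5.9} and the bound $s\leq|F(t)|\leq S$ on $\omega$,
\[
	s^2\,\dScalP{\chi_\omega c}_M\leq\dScalP{F\chi_\omega c}_M=\int_\omega|F|^2\gamma_c\,\ud\tr_M\leq S^2\,\dScalP{\chi_\omega c}_M<\infty,
\]
which shows $F(\chi_\omega c)\in\calL^2(M)$, i.e. $[\chi_\omega c]\in\Dom(T_F)$, and, since $\|T_F[\chi_\omega c]\|^2_M=\dScalP{F\chi_\omega c}_M$, taking square roots and invoking \eqref{eq:III:4.3} for $\dScalP{\chi_\omega c}_M$ gives both lines of \eqref{eq:III:5.15}. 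Finally,~(4) is the special case of~(3) with the same $\omega=F^{-1}\big(B(\lambda,\varepsilon)\big)$, $s=0$ and $S=|\lambda|+\varepsilon$: every $t\in\omega$ satisfies $|F(t)|\leq|F(t)-\lambda|+|\lambda|<|\lambda|+\varepsilon$, so $[\beta_{F,\lambda,\varepsilon,c}]=[\chi_\omega c]\in\Dom(T_F)$.

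I do not expect a genuinely hard step here: everything collapses to scalar integration against the finite measure $\tr_M$ once the $\gamma$-identities are in hand. The only places asking for a little care are the appeal to Lemma~\ref{lem:III:13} in~(2) to convert "$c\notin\Ker M(\omega)$" into strict positivity of the quadratic form (and to recognise the intended reading of the hypothesis, stated loosely as a "kernel of a set"), and, in~(3), keeping the squared exponents $s^2,S^2$ at the level of $\dScalP{\cdot}_M$ versus $s,S$ after passing to $\tnorm{\cdot}_M=\|[\cdot]\|_M$.
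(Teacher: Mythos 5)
Your proposal is correct and follows exactly the route the paper intends: the paper states Fact~\ref{fact:III:29} with only the remark ``By Fact~\ref{fact:III:21} we obtain'', and your argument fills in precisely those details --- identifying $\beta_{F,\lambda,\varepsilon,c}$ as the vector characteristic function $\chi_{F^{-1}(B(\lambda,\varepsilon))}c$, applying \eqref{eq:III:4.3} for the norm identity, Lemma~\ref{lem:III:13} for the strict positivity in~(2), and the $\gamma$-identities \eqref{eq:III:5.7}--\eqref{eq:III:5.9} for~(3) and~(4). Your reading of the loosely stated hypothesis in~(2) as $c\notin\Ker M\big(F^{-1}(B(\lambda,\varepsilon))\big)$ is the intended one.
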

Thus fact will be a convenient tool in some next proofs.

\fbox{Ad. 4)} Consider first $[f] \in \Dom(T_F)$, $\| [f] \|_M \leq 1$. We have by \eqref{eq:III:5.2'}
\begin{align*}
	\| T_F [f] \|_M^2 = 
	\| [F f] \|_M^2 &= 
	\int_{\Omega} |F(t)|^2 \big\langle D_M(t) f(t), f(t) \big\rangle_{\CC^d} \ud \tr_M(t) \\
	&=
	\int_{VNE_M(F)} \ldots + \int_{VE_M(F)} \ldots \\
	&= 
	\int_{\VE_M(F)} \ldots \\
	&\leq
	\big( \sup |\VE_M(F)| \big)^2 \cdot \| [f] \|^2_M,
\end{align*}
which proves "$\Leftarrow$" and "$\leq$" of \eqref{eq:III:5.3} from 4). We shall prove now "$\geq$" of \eqref{eq:III:5.3} also in the case when $\VE_M(F)$ is unbounded (with the $\| T_F \|$ being $+\infty$ for the unbounded operator $T_F$ case). So, let $s \in [0; +\infty)$ be such, that $s < \sup |\VE_M(F)|$. Then, using the definition of the supremum let us choose $\lambda_0 \in \VE_M(F)$ such that $s < |\lambda_0|$ and let $\varepsilon := \frac{|\lambda_0|-s}{2}$. Then for $\omega := F^{-1} \big( B(\lambda_0, \varepsilon) \big)$
\[
	\forall_{t \in \omega} \
	s \leq |F(t)| \leq |\lambda_0| + \varepsilon.
\]
So, by Fact~\ref{fact:III:29}, by \eqref{eq:III:5.2'} and by the definition of $\VE+M(F)$ we see that $\tr_M(\omega) > 0$ and so we can choose $c \in \CC^d$ such that for $f := \beta_{F, \lambda, \varepsilon,c}$ we have $\| [f] \|_M = 1$, $[f] \in \Dom(T_F)$ and $\| T_F [f] \| \geq s$.
Hence $\| T_F \| \geq s$, and by the arbitrariness of the choice of $s \in [0, \sup |\VE_M(F)|)$ we finally get \eqref{eq:III:5.3} (in the expanded -- "unbounded" case also), and 4) is proved.

\fbox{Ad. 3)} In the trivial case $M(\Omega) = 0$ also 3) is trivial. Suppose that $M(\Omega) \neq 0$. Then $L^2(M) \neq \{ 0 \}$, so by 4) we get a) (note, that $\VE(F) \neq \emptyset$ then, because $\VNE_M(F) \neq \CC$). Also "$\Rightarrow$" of b) obviously holds, so let us check "$\Rightarrow$". If $T_F = T_G$, then we have:
\[
	T_{F-G} \supset T_F - T_G = O_{\Dom(F)}
\]
where $0_Y$ denotes the operator with the domain $Y$ (for linear subspaces $Y$) which is constantly $0$ on $Y$. But $\Dom(T_F)$ is dense, and $T_{F-G}$ -- closed by 1), so $T_{F-G} \supset \overline{O_{\Dom(F)}} = 0$, i.e. $T_{F-G} = 0$. Hence we get our assertion by a)\footnote{\label{fn:III:26} Note, that for each measurable function $H : \Omega \to \CC$:
\[
	\VE_M(H) \subset \{ 0 \} \quad \Rightarrow \quad 
	\CC \setminus \{ 0 \} \subset \VNE_M(H) \quad \Rightarrow \quad
	\tr_M \big( \{ t \in \Omega : H(t) \neq 0 \big) = 0.
\]}.

\fbox{Ad. 6)} Let $g \in \calL^2(M)$. Then, by the definition of the multiplication by function operator (+ the footnote \ref{fn:III:20}) and the definition of the product of unbounded operators, we obtain for $g \in \calL^2(M)$:
\begin{enumerate}[(1)]
\item $[g] \in \Dom(T_F T_G)$ iff both of the equations holds 
\begin{align}
	\label{eq:III:5.16}
	&G g \in \calL^2(M) \\
	\label{eq:III:5.17}
	&F G g \in \calL^2(M)
\end{align}

\item $[g] \in \Dom(T_{FG}) \Leftrightarrow$ \eqref{eq:III:5.17} holds

\item $[g] \in \Dom(T_G) \Leftrightarrow$ \eqref{eq:III:5.16} holds
\end{enumerate}
Therefore $T_F T_G \subset T_{FG}$, and $\Dom(T_F T_G) = \Dom(T_{FG}) \cap \Dom(T_G)$. Hence 
\[
	T_F T_G = T_{FG} \quad \Leftrightarrow \quad
	\Dom(T_{FG}) \cap \Dom(T_G) = \Dom(T_{FG}) \quad \Leftrightarrow \quad
	\Dom(T_{FG}) \subset \Dom(T_G).
\]

\fbox{Ad. 7)} We shall use the following
\begin{lemma} \label{lem:III:30}
Let $G : \Omega \to \CC$ be $\frakM$-measurable, $\omega_0 := G^{-1}(\{0\})$.
\begin{enumerate}[(A)]
\item For any $f \in \calL^2(M)$
\[
	[f] \in \Ker T_G \quad \Leftrightarrow \quad
	G f \in \calL_0^2(M).
\]

\item If $c \in \CC^d$, then $[\chi_{\omega_0} c] \in \Ker T_G$.

\item $\Ker T_G = \{0\} \Leftrightarrow M(\omega_0) = 0$.

\item Suppose, that $M(\omega_0) = 0$, and let $H : \Omega \to \CC$ be a measurable function, such that 
\[
	H(t) = \frac{1}{G(t)} \quad \text{for } \tr_M \text{ a.e. } t \in \Omega \setminus \omega_0.
\]
Then
\begin{equation}
	\label{eq:III:5.18}	
	T_H T_G = \restr{I}{\Dom(T_G)}, \quad
	T_G T_H = \restr{I}{\Dom(T_H)},
\end{equation}
and $T_G : \Dom(T_G) \to \Dom(T_H)$ is a bijection onto $\Dom(T_H)$ and $T_H = (T_G)^{-1}$; moreover $(T_G)^{-1} \in \calB \big( L^2(M) \big)$ iff $0 \in \VNE_M(G)$.

\item $\rho(T_G) = \VNE_M(G)$.
\end{enumerate}
\end{lemma}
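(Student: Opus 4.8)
The plan is to establish parts (A)--(E) in sequence, each leaning on Theorem~\ref{thm:III:28} and on the characterisation of $\calL_0^2(M)$ in Fact~\ref{fact:III:12}. Part (A) is immediate from the definitions: $[f]\in\Ker T_G$ means $Gf\in\calL^2(M)$ and $[Gf]=0$, and since $\calL_0^2(M)\subset\calL^2(M)$ the first condition is subsumed by $Gf\in\calL_0^2(M)$. Part (B) then follows at once, since $G\chi_{\omega_0}$ vanishes identically on $\Omega$ (it is $0$ on $\omega_0$ and $\chi_{\omega_0}$ is $0$ off $\omega_0$), so $G(\chi_{\omega_0}c)\equiv 0\in\calL_0^2(M)$ and (A) applies.

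For part (C) I would prove ``$M(\omega_0)=0\Rightarrow\Ker T_G=\{0\}$'' thus: if $[f]\in\Ker T_G$, then by (A) and Fact~\ref{fact:III:12} we have $G(t)f(t)\in\Ker D_M(t)$ for $\tr_M$-a.e.\ $t$; off $\omega_0$ one has $G(t)\neq 0$ and $\Ker D_M(t)$ is a subspace, hence $f(t)\in\Ker D_M(t)$ there, while $\tr_M(\omega_0)=0$ by Remark~\ref{rem:III:8}, so $f(t)\in\Ker D_M(t)$ for $\tr_M$-a.e.\ $t\in\Omega$ and $[f]=0$. For the converse I would argue contrapositively: if $M(\omega_0)\neq 0$, then, being non-negative and non-zero, $M(\omega_0)$ has a positive eigenvalue, so there is $c\in\CC^d$ with $\langle M(\omega_0)c,c\rangle_{\CC^d}>0$; by \eqref{eq:III:4.3} this quantity equals $\|[\chi_{\omega_0}c]\|_M^2$, so $[\chi_{\omega_0}c]\neq 0$, yet it lies in $\Ker T_G$ by (B).

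Part (D) is the technical core. From $M(\omega_0)=0$ one gets $\tr_M(\omega_0)=0$ (Remark~\ref{rem:III:8}), so the hypothesis on $H$ upgrades to $HG=GH=1$ for $\tr_M$-a.e.\ $t\in\Omega$, whence $T_{HG}=T_{GH}=T_1=I$ (Theorem~\ref{thm:III:28}\,3b). Feeding this into Theorem~\ref{thm:III:28}\,6) gives $T_HT_G\subset I$ with $\Dom(T_HT_G)=\Dom(T_{HG})\cap\Dom(T_G)=\Dom(T_G)$, i.e.\ $T_HT_G=\restr{I}{\Dom(T_G)}$, and symmetrically $T_GT_H=\restr{I}{\Dom(T_H)}$ with $\Dom(T_GT_H)=\Dom(T_H)$. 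The first identity makes $T_G$ injective; for $[f]\in\Dom(T_G)$ the equality $H\cdot(Gf)=f$ holds $\tr_M$-a.e., hence $\dScalP{H(Gf)}_M=\dScalP{f}_M<\infty$, so $T_G[f]=[Gf]\in\Dom(T_H)$, giving $\Ran T_G\subset\Dom(T_H)$; and $\Dom(T_GT_H)=\Dom(T_H)$ shows that every $[g]\in\Dom(T_H)$ satisfies $T_H[g]\in\Dom(T_G)$ with $T_G(T_H[g])=[g]$, so $T_G$ maps $\Dom(T_G)$ onto $\Dom(T_H)$ and $(T_G)^{-1}=T_H$. Finally $(T_G)^{-1}\in\calB(L^2(M))$ iff $\VE_M(H)$ is bounded (Theorem~\ref{thm:III:28}\,4)); unwinding this via \eqref{eq:III:5.2}, \eqref{eq:III:5.2'} and the Lindel\"of argument, $\VE_M(H)$ bounded $\Leftrightarrow$ $H$ is $\tr_M$-essentially bounded $\Leftrightarrow$ $G$ is $\tr_M$-essentially bounded away from $0$ $\Leftrightarrow$ $\tr_M(G^{-1}(B(0,\delta)))=0$ for some $\delta>0$ $\Leftrightarrow$ $0\in\VNE_M(G)$.

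For part (E) I would observe that $T_G-\lambda_0 I=T_{G-\lambda_0}$ for every $\lambda_0\in\CC$ (same domain $\Dom(T_G)$, a bounded everywhere-defined summand being added), and that the zero set of $G-\lambda_0$ is $G^{-1}(\{\lambda_0\})$. Applying (C) and (D) to $G-\lambda_0$: $\lambda_0\in\rho(T_G)$ iff $T_{G-\lambda_0}$ is injective and onto $L^2(M)$ with bounded inverse, which by (C) forces $M(G^{-1}(\{\lambda_0\}))=0$ and is then, by (D), equivalent to $(T_{G-\lambda_0})^{-1}\in\calB(L^2(M))$, i.e.\ to $0\in\VNE_M(G-\lambda_0)$, i.e.\ to $\lambda_0\in\VNE_M(G)$ (and $\lambda_0\in\VNE_M(G)$ already forces $M(G^{-1}(\{\lambda_0\}))=0$, since $\{\lambda_0\}$ sits inside a ball with $M$-null preimage). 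This yields $\rho(T_G)=\VNE_M(G)$. I expect the main obstacle to be part (D): the careful tracking of domains through Theorem~\ref{thm:III:28}\,6) and, above all, the closing chain of equivalences translating ``$T_H$ bounded'' into ``$0\in\VNE_M(G)$'', which hinges on the separability/Lindel\"of passage between essential bounds and value sets recorded around \eqref{eq:III:5.2'}. A recurring minor subtlety worth flagging is that functions equal only $\tr_M$-a.e.\ define the same $L^2(M)$-class and have the same value of $\dScalP{\cdot}_M$, which is what legitimises replacing $H(Gf)$ by $f$ inside the domain arguments.
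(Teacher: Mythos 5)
Your proposal is correct and follows essentially the same route as the paper: (B) via the vector characteristic function $\chi_{\omega_0}c$, (C) via the norm identity $\|[\chi_{\omega_0}c]\|_M^2=\langle M(\omega_0)c,c\rangle$ together with the kernel characterization of $\calL_0^2(M)$, (D) via $HG=GH=\mathds{1}$ $\tr_M$-a.e.\ and parts 3b), 4), 6) of Theorem~\ref{thm:III:28}, and (E) by translating to the point $0$. The only differences are cosmetic (you deduce (B) from (A) rather than from Fact~\ref{fact:III:29}(3), argue (C) pointwise through $\Ker D_M(t)$ rather than through $\supp_{\calL^2(M)}$, and avoid the auxiliary $\tilde{H}$ in the boundedness chain), and your explicit write-up of (E) in fact supplies the step the paper leaves with a dangling reference.
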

\begin{proof}[Proof of Lemma~\ref{lem:III:30}]
For $c \in \CC^d$ we get $[\chi_{\omega_0} c] \in \Dom(T_G)$ and $[\chi_{\omega_0} c] \in \Ker(T_G)$, by Fact~\ref{fact:III:29}(3) (with $s=S=0$). Hence (B) holds.

To prove (C) assume first that $\Ker T_G = \{0\}$. Thus, by (B) and \eqref{eq:III:5.15}
\[
	\forall_{c \in \CC^d} \
	0 = 
	\| [\chi_{\omega_0} c] \|_M^2 = 
	\langle M(\omega_0) c, c \rangle,
\]
therefore $M(\omega_0) = 0$.

Now, assume $M(\omega_0) = 0$ and let $f \in \calL^2(M)$, $[f] \in \Ker T_G$. Hence $G f \in \calL^2_0(M)$. Recall now, that -- by Remark~\ref{rem:III:14}(b) -- for any $g \in \calL^2(M)$ the condition $g \in \calL_0^2(M)$ is equivalent to $M \big( \supp_{\calL^2(M)}(g) \big) = 0$, where
\[
	\supp_{\calL^2(M)}(g) =
	\big\{ t \in \Omega : D_M(t) g(t) \neq 0 \big\}
\]
(it is defined "up to" a $M$-measure zero set, due to the freedom of the choice of $D_M$, but let us fix some $D_M$ here). So, we have:
\begin{align*}
	0 
	&\leq
	M \big( \supp_{\calL^2(M)}(f) \big) \\
	&=
	M \big( \omega_0 \cap \supp_{\calL^2(M)}(f) \big) +
	\big( (\Omega \setminus \omega_0) \cap \supp_{\calL^2(M)}(f) \big) \\
	&=
	M \big( \big\{ t \in \Omega : G(t) \neq 0 \text{ and } D_M(t) f(t) \neq 0 \big\} \big) \\
	&=
	M \big( \big\{ t \in \Omega : G(t) \neq 0 \text{ and } D_M(t) \big( G(t) f(t) \big) \neq 0 \big\} \big) \\
	&\leq
	M \big( \supp_{\calL^2(M)}(G f) \big) = 0
\end{align*}
hence, also $M \big( \supp_{\calL^2(M)}(f) \big) = 0$ and $[f] = 0$, i.e. $\Ker T_G = \{ 0 \}$.

Now, we consider $H$ from (D). Since $M(\omega_0) = 0$, we have $H \cdot G = G \cdot H = \mathds{1}$ $\tr_M$ -- almost everywhere on $\Omega$ (here $\mathds{1}$ is the constant $1$ function), hence, by parts 3)b) and 6) of Theorem~\ref{thm:III:28} (both parts -- already proved!) we get $T_H T_G, T_G T_H \subset T_{\mathds{1}} = I$ and $\Dom(T_{\mathds{1}}) = L^2(M)$, so \eqref{eq:III:5.18} holds, $T_G$ is "1-1" and $T_H = T_G^{-1}$. Moreover, by parts 1), 3), 4) of the theorem:
\[
	T_G^{-1} \in \calB \big( L^2(M) \big) \quad \Leftrightarrow \quad
	T_{\tilde{H}} \in \calB \big( L^2(M) \big) \quad \Leftrightarrow \quad
	\exists_{r \in (1; \infty)} \ |\VE_M(\tilde{H})| \leq r,
\]
where
\[
	\tilde{H}(t) :=
	\begin{cases}
		\frac{1}{G(t)} & \text{for } t \in \Omega \setminus \omega_0 \\
		1 & \text{for } t \in \omega_0
	\end{cases},
\]
because $T_H = T_{\tilde{H}}$ (thanks to $M(\omega_0) = 0$). But
\[
	|\VE_M(\tilde{H})| \leq r \quad \Leftrightarrow \quad
	\VNE_M(\tilde{H}) \supset \Pi_r := \{ z \in \CC : |z| > r \} \quad \Leftrightarrow \quad
	M \big( \tilde{H}^{-1}(\Pi_r) \big) = 0,
\]
because $M \Big( \tilde{H}^{-1} \big( \VNE_M(\tilde{H}) \big) \Big) = 0$ by \eqref{eq:III:5.2'}.

But for $r > 1$ $\tilde{H}^{-1} (\Pi_r) = G^{-1} \big( K(0, \tfrac{1}{r}) \big)$. So, finally
\[
	T^{-1}_G \in \calB \big( L^2(M) \big) \quad \Leftrightarrow \quad
	0 \in \VNE_M(G).
\]
To obtain (E) it suffices to use (????) and the fact that if $M(\omega_0) \neq 0$, then $0 \notin \rho(T_G)$ since $\Ker T_G \neq \{ 0 \}$ by (C) and also $0 \notin \VNE_M(G)$ because $\VNE_M(G)$ is an open subset of $\CC$ and if $0 \in \VNE_M(G)$, then $0 = M \Big( G^{-1} \big( \VNE_M(G) \big) \Big)$ (see \eqref{eq:III:5.2'}) and 
\[
	M \Big( G^{-1} \big( \VNE_M(G) \big) \Big) \geq 
	M \big( G^{-1} (\{0\}) \big) =
	M(\omega_0)  \gneq 0.
\]
\end{proof}

Now to obtain 7), it suffices to use (E) of the lemma to $G = F = \lambda_0 \mathds{1}$ for each $\lambda_0 \in \CC$ and the previous observation, that $T_G = T_F - \lambda_0 I$ for such $G$. -- This gives $\rho(T_F) = \VNE_M(F)$, so $\sigma(T_F) = \VE_M(F)$. Similarly, to get 9) using (D) and (E) of the lemma for this $G$, and we get 8) using (C). It remains to prove 10).

Assume $F(\Omega) \subset \RR$. To get 10) we should check that:
\begin{enumerate}[(i)]
\item $E := E_{F, M}$ is a spectral measure on $\Bor(\RR)$

\item $\int_{\RR} \xx \ud E = T_F$.
\end{enumerate}

Property (i) means that
\begin{itemize}
\item $E$ is additive

\item $E(\RR) = I$

\item $E(\omega)^* = E(\omega)$ for any $\omega \in \Bor(\RR)$

\item $E(\omega \cap \omega') = E(\omega) E(\omega')$ for any $\omega, \omega' \in \Bor(\RR)$

\item $\forall_{[f] \in L^2(M)} \ E_{[f]}$ is a measure on\footnote{\label{fn:III:27} Recall that for a spectral measure $E$ for a Hilbert space $\calH$ on $\sigma$-algebra $\frakM$ and for $x,y \in \calH$ $E_{x,y} : \frakM \to \CC$ is given for $\omega \in \frakM$ by $E_{x,y}(\omega) := \langle E(\omega) x, y \rangle$, and $E_x := E_{x,x}$.} $\Bor(\RR)$.
\end{itemize}

The first four properties are an easy exercise. To check the fifth property, observe that for $[f],[g] \in L^2(M)$
\begin{equation}
	\label{eq:III:5.19}
	\forall_{\omega \in \Bor(\RR)} \
	E_{[f], [g]}(\omega) =
	\langle E(\omega) [f], [g] \rangle_M =
	\dScalP{\chi_{F^{-1}(\omega)} \cdot f, g}_M =
	\int_{F^{-1}(\omega)} \gamma_{f,g} \ud \tr_M
\end{equation}
and recall, that by Lemma~\ref{lem:III:9} we have
\begin{equation}
	\label{eq:III:5.20}
	\gamma_{f,g} \in \calL^1(\tr_M).
\end{equation}
So, let us denote by $\mu_{f,g}$ the complex measure:
\begin{equation}
	\label{eq:III:5.21}
	\mu_{f,g} := \gamma_{f,g} \ud \tr_M.
\end{equation}
By standard properties of complex measures we know that the variation (measure) of $\mu_{f,g}$ is given by
\begin{equation}
	\label{eq:III:5.22}
	|\mu_{f,g}| := |\gamma_{f,g}| \ud \tr_M.
\end{equation}
In particular, for $\mu_f := \mu_{f,f}$ we have
\begin{equation}
	\label{eq:III:5.22'}
	\mu_f := \gamma_f \ud \tr_M
\end{equation}
and $\mu_f$ is a finite measure, $\mu_f \geq 0$. Recall that the complex measure $\mu_{f,g}$ can be "moved" from $\frakM$ onto $\Bor(\RR)$ by the measurable $F : \Omega \to \RR$ and the resulting complex measure $(\mu_{f,g})_F$ ($(\mu_f)_F$ for $g=f$, and then $(\mu_f)_F$ is a finite measure) is given by
\begin{equation}
	\label{eq:III:5.23}
	(\mu_{f,g})_F(\omega) = \mu_{f,g} \big( F^{-1} (\omega) \big), \quad
	\omega \in \Bor(\RR).
\end{equation}
The "change of variable" rule for complex measures says, that for any Borel function $\varphi : \RR \to \CC$
\begin{equation}
	\label{eq:III:5.24}
	\varphi \in \calL^1 \big( (\mu_{f,g})_F \big) \quad \Leftrightarrow \quad
	\varphi \circ F \in \calL^1(\mu_{f,g})
\end{equation}
and if $\varphi \in \calL^1 \big( (\mu_{f,g})_F \big)$, then
\begin{equation}
	\label{eq:III:5.24'}
	\int_\RR \varphi \ud (\mu_{f,g})_F =
	\int_{\Omega} \varphi \circ F \ud \mu_{f,g}
\end{equation}
Hence, finally
\begin{equation}
	\label{eq:III:5.25}
	E_{[f],[g]} = (\mu_{f,g})_F.
\end{equation}
Now, we can come back to the last part of (i) and we see, that $E_{[f]} = (\mu_f)_F$, so it is a finite measure, in particular.

To obtain (ii) let us notice first, that both sides of (ii): $T_F$ and $\int_\RR \xx \ud E$ are self-adjoint operators. So, it suffices to prove the inclusion
\[
	T_F \subset \int_\RR \xx \ud E
\]
to get (ii). Let $[f] \in \Dom(T_F)$, so $T_F \in \calL^2(M)$ and
\begin{equation}
	\label{eq:III:5.26}
	|F|^2 \gamma_f = \gamma_{F f} \in \calL^1(\tr_M)
\end{equation}
(see \eqref{eq:III:5.7} and use \eqref{eq:III:5.20} for $F f$), which can be written as
\begin{equation}
	\label{eq:III:5.26'}
	|F|^2  \in \calL^1(\mu_f).
\end{equation}
Moreover, we have by \eqref{eq:III:5.10} and \eqref{eq:III:5.7}
\begin{equation}
	\label{eq:III:5.27.1}
	\begin{split}
		\forall_{[g] \in L^2(M)} \
		\big\langle T_F[f], [g] \big\rangle_M 
		&=
		\big\langle [F f], [g] \big\rangle_M 
		=
		\int_{\Omega} \gamma_{F f, g} \ud \tr_M \\
		&=
		\int_{\Omega} F \cdot \gamma_{f,g} \ud \tr_M 
		=
		\int_{\Omega} F \ud \mu_{f,g}
	\end{split}
\end{equation}
and
\begin{equation}
	\label{eq:III:5.27.2}
	F \in \calL^1(\mu_{f,g})
\end{equation}
by Lemma~\ref{lem:III:9} used for $F f, g$.

We have to check, that $[f] \in \Dom \Big( \int_{\RR} \xx \ud E \Big)$ and that
\begin{equation}
	\label{eq:III:5.28}
	\forall_{[g] \in L^2(M)} \
	\bigg\langle \bigg( \int_{\RR} \xx \ud E \bigg) [f], [g] \bigg\rangle_{M} =
	\big\langle T_F [f], [g] \big\rangle_M.
\end{equation}
But by \eqref{eq:III:5.25}, by FCTh, and by the definition of the "weak" integration with respect to the spectral measure $E$
\begin{equation}
	\label{eq:III:5.29}
	[h] \in \Dom \Big( \int_{\RR} \xx \ud E \Big) \quad \Leftrightarrow \quad
	\xx \in \calL^2(E_{[h]}) = \calL^2 \big( (\mu_h)_F \big) \quad \Leftrightarrow \quad
	|\xx|^2 \in \calL^1 \big( (\mu_h)_F \big).
\end{equation}
and for $[h] \in \Dom \Big( \int_{\RR} \xx \ud E \Big)$ and $[g] \in L^2(M)$ we have
\begin{equation}
	\label{eq:III:5.30}	
	\xx \in \calL^1 \big( (\mu_{h,g}) \big) \quad \text{and} \quad
	\bigg\langle 
	\bigg( \int_{\RR} \xx \ud E \bigg)[h], [g] \bigg\rangle_M =
	\int_{\RR} \xx \ud (\mu_{h,g})_F.
\end{equation}
But using \eqref{eq:III:5.24} to $\varphi = |\xx|^2$ and $g = f$, by \eqref{eq:III:5.26'} we get $|\xx^2| \circ F \in \calL^1(\mu_f)$, hence
\[
	|\xx|^2 \in \calL^1 \big( (\mu_f)_F \big).
\]
Now by \eqref{eq:III:5.29} $[f] \in \Dom \Big( \int_{\RR} \xx \ud E \Big)$, and we can take $h=f$ in \eqref{eq:III:5.30}, i.e. LHS of \eqref{eq:III:5.28} equals $\int_\RR F \ud (\mu_{f,g})_F$. But, by \eqref{eq:III:5.27.1}, \eqref{eq:III:5.27.2}, RHS of \eqref{eq:III:5.28} equals $\int_\Omega F \ud \mu_{f,g}$, and $F \in \calL^1(\mu_{f,g})$, so taking $\varphi = \xx$ in \eqref{eq:III:5.24} and \eqref{eq:III:5.24'} we see that $\int_{\Omega} F \ud \mu_{f,g} = \int_\RR \xx \ud (\mu_{f,g})_F$, i.e. \eqref{eq:III:5.28} holds.
\end{proof}

\newpage

\section{"$\xx$MUE" Theorem -- The finitely cyclic case} \label{sec:IV}

\vspace{3ex}

\subsection{Finitely cyclic s.a. operators} \label{sec:IV:1}
Let $A$ be a linear operator in a normed space $X$. Recall that \emph{$A$ is cyclic} iff there exists a \emph{a cyclic vector $\varphi \in X$ for $A$}, which means that\footnote{Recall that $\Dom(A)$ denotes the domain of a linear operator $A$ and $\Dom(A^\infty) := \bigcup_{n \in \NN} \Dom(A^n)$.}
\begin{equation}
	\label{eq:IV:1.1'}
	\varphi \in \Dom(A^\infty) \quad \text{and} \quad
	\Orb_A(\varphi) := \big\{ A^n \varphi : n \in \NN_0 \big\} \text{ is linearly dense in } X.
\end{equation}

We define here some notions, which generalize cyclicity. First observe that by the linearity of $A$ we can consider the space $\lin \{ \varphi \}$ instead of an individual cyclic vector $\varphi$.

\begin{definition} \label{def:IV:1}
A subset $Y \subset X$ is \emph{a cyclic set for $A$} (or -- shortly -- is \emph{cyclic for $A$}) iff\footnote{Note here, that assuming more -- the density instead of linear density -- we get the \emph{hyper}cyclicity notion.}
\begin{equation}
	\label{eq:IV:1.2}
	Y \subset \Dom(A^\infty) \quad \text{and} \quad
	\Orb_A(Y) := \big\{ A^n y : n \in \NN_0, y \in Y \big\} \text{ is linearly dense in } X.
\end{equation}
\end{definition}

When $Y \underset{\lin}{\subset} X$, then we say also a "\emph{cyclic space}" instead of "cyclic set".

Similarly, when $\vec{\varphi} = (\varphi_1, \ldots, \varphi_d)$ is not "only" a finite set $\{ \varphi_1, \ldots, \varphi_d \}$, but an (ordered) system of vectors (that is $\vec{\varphi} \in X^d$, and not $\vec{\varphi} \subset X$), then we denote
\[
	\Orb_A(\varphi) := 
	\Orb_A(\{ \varphi_1, \ldots, \varphi_d \}) =
	\{ A^n \varphi_j : n \in \NN_0, j=1,\ldots,d \}
\]
and we say $\varphi$ \emph{is a cyclic system} (or \emph{is cyclic}, for short) \emph{for $A$} iff $\{ \varphi_1, \ldots, \varphi_d \}$ is a cyclic set for $A$. 

We study here "finite cyclicity" defined as follows
\begin{definition} \label{def:IV:2}
\emph{$A$ is finitely cyclic} iff there exists a finite dimensional cyclic space for $A$. Each such subspace of $X$ is called a \emph{space of cyclicity for} $A$ (note, that this is more than to be a cyclic space, because of the extra finite dimension assumption).
\end{definition}

Observe, that for any $\emptyset \neq Y \subset \Dom(A^\infty)$
\begin{equation}
	\label{eq:IV:1.3}
	\lin \Orb_A(Y) = \lin \Big( \Orb_A \big( \lin(y) \big) \Big),
\end{equation}
because, by the linearity of $A$, $\lin(Y) \subset \Dom(A^\infty)$ and
\begin{equation}
	\label{eq:IV:1.3'}
	\Orb_A \big( \lin(Y) \big) \subset \lin \big( \Orb_A(Y) \big) \quad \Leftrightarrow \quad Y \subset \Dom(A^\infty).
\end{equation}
(a very easy exercise).

Therefore we get:
\begin{fact} \label{fact:IV:3}
Suppose that $\varphi_1, \ldots, \varphi_d \in X$. TFCAE:
\begin{enumerate}
\item $A$ is finitely cyclic with $\lin \big( \{\varphi_1, \ldots, \varphi_d \} \big)$ -- a cyclic space for $A$

\item $\{ \varphi_1, \ldots, \varphi_d \}$ is a cyclic set for $A$

\item $\vec{\varphi} = (\varphi_1, \ldots, \varphi_d)$ is a cyclic system for $A$.
\end{enumerate}
\end{fact}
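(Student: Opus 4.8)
The plan is to reduce everything to the identity \eqref{eq:IV:1.3}, which already does the real work, so the proof will be a short unwinding of definitions. Throughout I would write $Y := \{\varphi_1,\ldots,\varphi_d\}$ and $Z := \lin(Y)$; observe that $Z$ is automatically finite dimensional (it is spanned by the $d$ vectors $\varphi_1,\ldots,\varphi_d$), so the extra ``finite dimensional'' requirement in Definition \ref{def:IV:2} imposes nothing new in the present situation.

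The equivalence of (2) and (3) I would dispose of first: by the definition of a cyclic system given just before the Fact, ``$\vec{\varphi} = (\varphi_1,\ldots,\varphi_d)$ is a cyclic system for $A$'' means precisely that the finite set $\{\varphi_1,\ldots,\varphi_d\}$ is a cyclic set for $A$, which is (2). For the equivalence of (1) and (2) I would check the two clauses of cyclicity separately. First, $Y \subset \Dom(A^\infty)$ if and only if $Z \subset \Dom(A^\infty)$: the direction ``$\Leftarrow$'' is trivial since $Y \subset Z$, and ``$\Rightarrow$'' holds because $\Dom(A^\infty) = \bigcap_{n \in \NN_0} \Dom(A^n)$ is a linear subspace of $X$ (each $\Dom(A^n)$ is one, by linearity of $A$ and induction on $n$), hence it contains $\lin(Y)$ once it contains $Y$; this is exactly the right-hand side of \eqref{eq:IV:1.3'}. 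Second, granting this common containment, \eqref{eq:IV:1.3} yields $\lin\Orb_A(Y) = \lin\Orb_A(Z)$, so $\Orb_A(Y)$ is linearly dense in $X$ if and only if $\Orb_A(Z)$ is. Combining the two clauses, ``$Y$ is a cyclic set for $A$'' is equivalent to ``$Z$ is a cyclic space for $A$'', and the latter, together with $\dim Z < \infty$, is exactly statement (1).

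There is no genuine obstacle here; the only point worth a sentence is that the passage between $Y$ and $\lin(Y)$ inside $\Dom(A^\infty)$ relies on the linearity of $A$, and that \eqref{eq:IV:1.3} already encapsulates the density bookkeeping, so I would simply cite it rather than re-derive it. I would present the argument as the clause-by-clause verification just described rather than as a cyclic chain (1) $\Rightarrow$ (2) $\Rightarrow$ (3) $\Rightarrow$ (1), since the symmetric treatment of the $\Dom(A^\infty)$-containment and of the linear-density condition makes the logical content most transparent.
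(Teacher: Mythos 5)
Your proof is correct and follows the same route the paper intends: the paper derives Fact \ref{fact:IV:3} directly from \eqref{eq:IV:1.3} and \eqref{eq:IV:1.3'} (the text reads ``Therefore we get:''), and your clause-by-clause unwinding --- (2)$\Leftrightarrow$(3) by definition, then matching the $\Dom(A^\infty)$-containment and the linear-density condition between $Y$ and $\lin(Y)$ --- is exactly that argument spelled out. Nothing is missing.
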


Obviously, the cyclicity property of $A$ is the finite cyclicity with $d=1$ and a cyclic set $\{\varphi_1, \ldots, \varphi_d \}$ or a cyclic system $\vec{\varphi}$ is a "good" analog of a cyclic vector. But note, that $d$ can be larger, in general, than the dimension of the space $\lin \big( \{ \varphi_1, \ldots, \varphi_d \} \big)$.

\vspace{5ex}

\subsection{The spectral matrix measure for a s.a. operator and a system of vectors} \label{sec:IV:2}
In "our" "$\xx$MUE" Theorem for the cyclic case (Section~\ref{sec:II}) the crucial role played the spectral measure $\mu = E_{A,\varphi}$ for $A$ and a fixed cyclic vector $\varphi$ for $A$. As one can guess now, for finitely cyclic version of $\xx$MUETh. we shall consider the multiplication by function $\xx$ operator $T_\xx$ in the space $L^2(M)$ for some matrix measure $M$ on $\Bor(\RR)$ (-- the class of Borel subsets of $\RR$). Thus matrix measure will be called \emph{the spectral matrix measure for $A$ and} (the cyclic) \emph{system $\vec{\varphi}$}, however we define it, and we denote it by $E_{A,\vec{\varphi}}$, also we do not assume the cyclicity.

So, assume that: $A$ is a self-adjoint operator in a Hilbert space $\calH$, $\vec{\varphi} \in \calH^d$, $\vec{\varphi} = (\varphi_1, \ldots, \varphi_d)$, and that $E_A: \Bor(\RR) \to \calB(\calH)$ is the projection-valued spectral measure for $A$ (other name: the resolution of the identity $I$ for $A$) -- see ...

Now consider
\begin{equation}
	\label{eq:IV:2.1}
	\begin{split}
		&E_{A,\vec{\varphi}} : \Bor(\RR) \to \Mat{d}{\CC}, \\
		&E_{A,\vec{\varphi}}(\omega) := 
		\Big( \big\langle E_A(\omega) \varphi_j, \varphi_i \big\rangle \Big)_{i,j=1,\ldots,d} \in \Mat{d}{\CC}, \quad 
		\omega \in \Bor(\RR)
	\end{split}
\end{equation}
(here $\langle \cdot, \cdot \rangle$ is for $\calH$).

\begin{lemma} \label{lem:IV:4}
If $B$ is s.a. in a Hilbert space $\calH$, $\varphi_1, \ldots, \varphi_d \in \Dom(B)$ and $C \in \Mat{d}{\CC}$ is given by
\[
	C := \big( \langle B \varphi_j, \varphi_i \rangle \big)_{i,j=1,\ldots,d},
\]
then $C$ is s.a. If moreover, $B \geq 0$, then also $C \geq 0$.
\end{lemma}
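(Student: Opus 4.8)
The plan is to verify the two claimed properties of the matrix $C$ directly from its definition, using only elementary properties of the scalar product on $\calH$ and (for the second part) the characterisation of positivity of a self-adjoint operator via its quadratic form.

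First I would prove self-adjointness of $C$, i.e.\ $C_{ij} = \overline{C_{ji}}$ for all $i,j$. This is immediate: $C_{ij} = \langle B\varphi_j, \varphi_i\rangle$, and since $B$ is self-adjoint and $\varphi_i, \varphi_j \in \Dom(B)$ we have $\langle B\varphi_j, \varphi_i\rangle = \langle \varphi_j, B\varphi_i\rangle = \overline{\langle B\varphi_i, \varphi_j\rangle} = \overline{C_{ji}}$, which is precisely the condition for $C$ to be a self-adjoint (Hermitian) matrix.

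Next I would prove that $B \geq 0$ implies $C \geq 0$. By the definition of the matrix ordering recalled in the excerpt (footnote~\ref{fn:III:1}), it suffices to check that $\langle Cx, x\rangle_{\CC^d} \geq 0$ for every $x = (x_1, \ldots, x_d) \in \CC^d$. Expanding,
\[
	\langle Cx, x\rangle_{\CC^d} = \sum_{i,j=1}^d C_{ij} x_j \overline{x_i} = \sum_{i,j=1}^d x_j \overline{x_i} \langle B\varphi_j, \varphi_i\rangle = \Big\langle B\Big(\sum_{j=1}^d x_j \varphi_j\Big), \sum_{i=1}^d x_i \varphi_i \Big\rangle,
\]
where sesquilinearity of $\langle\cdot,\cdot\rangle$ and linearity of $B$ on $\Dom(B)$ (note $\sum_j x_j\varphi_j \in \Dom(B)$ since $\Dom(B)$ is a linear subspace) are used. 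Setting $y := \sum_{j=1}^d x_j \varphi_j \in \Dom(B)$, the right-hand side is $\langle By, y\rangle \geq 0$ because $B \geq 0$. Hence $\langle Cx, x\rangle_{\CC^d} \geq 0$ for all $x$, so $C \geq 0$. (One may also remark that this computation re-proves self-adjointness in the $B \geq 0$ case via footnote~\ref{fn:III:3}, since $\langle Cx,x\rangle_{\CC^d} \in \RR$ for all $x$.)

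There is essentially no obstacle here: the whole argument is the standard ``quadratic form'' manipulation, and the only point requiring a word of care is that the finite linear combination $\sum_j x_j \varphi_j$ stays in $\Dom(B)$, which holds because $\Dom(B)$ is a linear subspace and each $\varphi_j$ lies in it. I expect the author's proof to be one or two lines along exactly these lines.
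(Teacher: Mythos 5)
Your proof is correct and complete: the Hermitian symmetry follows from $\langle B\varphi_j,\varphi_i\rangle=\overline{\langle B\varphi_i,\varphi_j\rangle}$, and the positivity from the identity $\langle Cx,x\rangle_{\CC^d}=\langle By,y\rangle$ with $y=\sum_j x_j\varphi_j\in\Dom(B)$. The paper dismisses this lemma with ``Easy exercise,'' and your argument is precisely the standard quadratic-form computation that is intended there.
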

\begin{proof}
Easy exercise.
\end{proof}

Using this lemma, the fact that for any Borel set $\omega$ $E_A(\omega)$ is an orthogonal projection (so, in particular, $E_A(\omega) \geq 0$) and the "weak complex measure" properties of $E_A$, we see that $E_{A, \vec{\varphi}}$ defined above is a matrix measure on $\Bor(A)$.

\begin{definition} \label{def:IV:5}
$E_{A,\vec{\varphi}}$ (given by \eqref{eq:IV:2.1}) is called \emph{the spectral matrix measure for $A$ and $\vec{\varphi}$}.
\end{definition}

We shall prove now a convenient result, being our first connection joining spectral calculus for $A$ and \emph{some} vectors -- classes of functions -- from $L^2(E_{A, \vec{\varphi}})$.

Those "some vectors" would be just vectors from the $L^2_\Sigma(E_{A, \vec{\varphi}})$ space (see Section~\ref{sec:III:4}). And it is here, and also later, when you can see that the role (mentioned before) of $\calL_\Sigma^2(M)$ in $\calL^2(M)$ is analog now to the role of the Schwarz class function $\calS(\RR)$ in $\calL^2(\RR)$ in some popular (e.g. Rudin \cite{Rudin1991}) proofs of the unitarity of the Fourier transform.

\begin{theorem} \label{thm:IV:6}
Suppose that $A$ is s.a. in $\calH$ and $\vec{\varphi} \in \calH^d$ ($d \in \NN$). Let $M$ be the spectral matrix measure for $A$ and $\vec{\varphi}$ (i.e. $M = E_{A, \vec{\varphi}}$). Then:
\begin{enumerate}[(i)]
\item for any $f \in \calL^2_\Sigma(M)$ and $j=1,\ldots,d$
\begin{equation} 
	\label{eq:IV:2.2}
	\varphi_j \in \big( f_j(A) \big),
\end{equation}

\item the linear transformation $\tilde{W} : \calL^2_\Sigma(M) \to \calH$ given by
\begin{equation} 
	\label{eq:IV:2.3}
	\tilde{W} f := \sum_{j=1}^d f_j(A) \varphi_j, \quad
	f \in \calL^2_\Sigma(M)
\end{equation}
satisfies
\begin{equation} 
	\label{eq:IV:2.4}
	\forall_{f \in \calL^2_\Sigma(M)} \
	\|\tilde{W} \| = \tnorm{f}_M,
\end{equation}

\item the transformation $W : L^2_\Sigma(M) \to \calH$ given by
\begin{equation} 
	\label{eq:IV:2.5}
	W [f] = \tilde{W} f, \quad
	f \in \calL^2_\Sigma(M)
\end{equation}
is properly defined (i.e. $\tilde{W} f = 0$ for $f \in \calL^2_\Sigma(M) \cap \calL^2_0(M)$) and is a linear isometry into $\calH$.
\end{enumerate}
\end{theorem}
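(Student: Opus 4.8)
The plan is to mimic the scalar case argument from Section~\ref{sec:II}, with the complication that the spectral measure is now a matrix measure and the natural ``test'' subspace is $\calL^2_\Sigma(M)$ rather than polynomials. First I would prove (i): fix $f\in\calL^2_\Sigma(M)$ and $j\in\{1,\dots,d\}$. By definition of $\calL^2_\Sigma(M)$ we have $f_j\in\calL^2(M_{jj})$, i.e.\ $\int_\RR |f_j|^2\ud M_{jj}<\infty$. But $M_{jj}=E_{A,\vec\varphi,\,jj}$ is precisely the scalar spectral measure $E_{A,\varphi_j}$ of $A$ for the vector $\varphi_j$ (read off the definition \eqref{eq:IV:2.1} and \eqref{eq:I:spmes}). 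By STh+FCTh, $\Dom\big(f_j(A)\big)=\{y\in\calH:\int_\RR|f_j|^2\ud E_{A,y}<\infty\}$ (cf.\ \eqref{eq:II:1.3}), so $\int_\RR|f_j|^2\ud E_{A,\varphi_j}<\infty$ gives $\varphi_j\in\Dom\big(f_j(A)\big)$, which is \eqref{eq:IV:2.2}. This also makes $\tilde W f$ in \eqref{eq:IV:2.3} a well-defined element of $\calH$, and linearity of $\tilde W$ follows from linearity of the functional calculus in $f$ together with linearity of each $f\mapsto f_j$.

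Next, for (ii), I would compute $\|\tilde W f\|^2$ by expanding the sum and using polarization of the functional calculus. Writing $\tilde W f=\sum_{j} f_j(A)\varphi_j$ we get
\[
	\|\tilde W f\|^2
	= \sum_{i,j=1}^d \big\langle f_j(A)\varphi_j,\ f_i(A)\varphi_i\big\rangle.
\]
For each pair $(i,j)$, the product rule $f_i(A)^* f_j(A)\subset \overline{f_i}(A) f_j(A)\subset (\overline{f_i} f_j)(A)$ (FCTh) together with $\varphi_j\in\Dom\big(f_j(A)\big)$ and the fact that $\overline{f_i}f_j$ is integrable against the relevant scalar spectral measure (Proposition~\ref{prop:III:22} applied to the complex measure $M_{ij}$, with $h_1=\overline{f_i}\in\calL^2(M_{ii})$, $h_2=f_j\in\calL^2(M_{jj})$) yields
\[
	\big\langle f_j(A)\varphi_j,\ f_i(A)\varphi_i\big\rangle
	= \big\langle (\overline{f_i} f_j)(A)\varphi_j,\ \varphi_i\big\rangle
	= \int_\RR \overline{f_i}\, f_j \ud E_{A,\varphi_j,\varphi_i}
	= \int_\RR f_j\,\overline{f_i}\ud M_{ij}.
\]
Here one must be a little careful to justify the middle equality: the ``weak'' form of the functional calculus, $\langle g(A)x,y\rangle=\int g\,\ud E_{A,x,y}$, holds whenever $x\in\Dom\big(g(A)\big)$ with $g=\overline{f_i}f_j$, and the chain $f_j(A)\varphi_j\in\Dom\big(\overline{f_i}(A)\big)$ is exactly the domain condition that Proposition~\ref{prop:III:22} guarantees. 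Summing over $i,j$ and comparing with the definition \eqref{eq:III:4.8} of $\dScalP{\cdot,\cdot}_\Sigma$ gives $\|\tilde W f\|^2=\dScalP{f,f}_\Sigma=\dScalP{f}_M=\tnorm f_M^2$, where the middle equality is Fact~\ref{fact:III:25}(2). This is \eqref{eq:IV:2.4}.

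Finally (iii) is a routine factorization: \eqref{eq:IV:2.4} shows $\calL^2_\Sigma(M)\cap\calL^2_0(M)\subset\Ker\tilde W$, since $\tnorm f_M=0$ forces $\|\tilde W f\|=0$; hence $\tilde W$ descends to a well-defined linear map $W$ on the quotient $L^2_\Sigma(M)=\pi_M(\calL^2_\Sigma(M))$ by the formula \eqref{eq:IV:2.5}, and \eqref{eq:IV:2.4} transfers verbatim to $\|W[f]\|=\|[f]\|_M$, so $W$ is a linear isometry into $\calH$. The main obstacle is the bookkeeping in step (ii): making the polarized product-rule computation fully rigorous at the level of unbounded operator domains --- one needs $f_j(A)\varphi_j\in\Dom\big(\overline{f_i}(A)\big)$ for every $i$, which is where the hypothesis $f\in\calL^2_\Sigma(M)$ (rather than merely $f\in\calL^2(M)$) is essential and where Proposition~\ref{prop:III:22}/Lemma~\ref{lem:III:23} do the real work; everything else is direct translation from the $d=1$ argument of Section~\ref{sec:II}.
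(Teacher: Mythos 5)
Parts (i) and (iii) of your proposal are correct and coincide with the paper's argument. The gap is in part (ii), in the justification of
\[
	\big\langle f_j(A)\varphi_j,\ f_i(A)\varphi_i\big\rangle
	= \big\langle (\overline{f_i} f_j)(A)\varphi_j,\ \varphi_i\big\rangle .
\]
This step silently requires two operator--domain conditions: $\varphi_j\in\Dom\big((\overline{f_i}f_j)(A)\big)$, i.e.\ $\overline{f_i}f_j\in\calL^2(E_{A,\varphi_j})=\calL^2(M_{jj})$, and $f_j(A)\varphi_j\in\Dom\big(\overline{f_i}(A)\big)$, i.e.\ $|f_i|^2|f_j|^2\in\calL^1(M_{jj})$ (since $E_{A,f_j(A)\varphi_j}=|f_j|^2\ud E_{A,\varphi_j}$). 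Neither follows from the hypothesis $f\in\calL^2_\Sigma(M)$, and Proposition~\ref{prop:III:22} does not supply them: it only gives $f_j\overline{f_i}\in\calL^1(|M_{ij}|)$, which is integrability of the \emph{function} against the \emph{complex measure} $E_{A,\varphi_j,\varphi_i}$ --- a much weaker statement than membership of the vector $f_j(A)\varphi_j$ in the domain of the unbounded operator $\overline{f_i}(A)$. The failure is visible already for $d=1$ and $i=j$: your identity would read $\|f(A)\varphi\|^2=\langle(|f|^2)(A)\varphi,\varphi\rangle$ and would require $f\in\calL^4(E_{A,\varphi})$, which does not follow from $f\in\calL^2(E_{A,\varphi})$.

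The target identity $\langle f_j(A)\varphi_j, f_i(A)\varphi_i\rangle=\int_\RR f_j\overline{f_i}\ud M_{ij}$ is nevertheless true; the paper reaches it without ever forming the operator $(\overline{f_i}f_j)(A)$ or the product $f_i(A)^*f_j(A)$. It works at the level of complex spectral measures: Lemma~\ref{lem:IV:7} gives $E_{x,f(A)z}=\overline{f}\ud E_{x,z}$ for $z\in\Dom\big(f(A)\big)$, and iterating this (Lemma~\ref{lem:IV:8}) yields $E_{f(A)x,g(A)y}=f\overline{g}\ud E_{x,y}$ together with the integrability $f\overline{g}\in\calL^1(E_{x,y})$ as part of the conclusion; evaluating at $\omega=\RR$ then produces each cross term directly. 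To repair your proof, replace the polarized product-rule step by this measure-theoretic argument; the remainder of your write-up then goes through as stated.
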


We shall need some abstract results concerning spectral complex measures and functional calculus for each self-adjoint operator to prove this theorem. The first is a generalization of a well-known result\footnote{See [Lemma 3.23(3), Rudin AF] concerning bounded functions $f$.}

\begin{lemma} \label{lem:IV:7}
Let $A$ be a s.a. operator in $\calH$, $E$ -- its projection valued spectral measure $E=E_A$, $f : \RR \to \CC$ -- a Borel function (= measurable with respect to $\Bor(\RR)$) and $x \in \calH$, $z \in \Dom \big( f(A) \big)$, then $\overline{f} \in \calL^1(E_{x,z})$\footnote{Recall that for a complex measure $\mu$ $\calL^2(\mu) = \calL^1(|\mu|)$ ($|\mu|$ -- the variation of $\mu$); of course $\overline{f} \in \calL^1 \ldots \Leftrightarrow f \in \calL^1 \ldots$.} and
\begin{equation} 
	\label{eq:IV:2.6}
	E_{x, f(A) z} = \overline{f} \ud E_{x,z}.
\end{equation}
\end{lemma}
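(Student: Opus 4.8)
The plan is to establish the two parts of the claim in turn: first the integrability $\overline f \in \calL^1(E_{x,z})$, and then, using it, the measure identity \eqref{eq:IV:2.6}, which I would reduce to the already known bounded-$f$ case by a truncation argument.

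For the integrability the key input is the variation estimate $|E_{x,z}|(\omega) \le \sqrt{E_x(\omega)}\,\sqrt{E_z(\omega)}$ for every $\omega \in \Bor(\RR)$. This is proved exactly as Lemma~\ref{lem:III:23}: for a fixed $\omega$, since $E_A(\omega)$ is an orthogonal projection, $|E_{x,z}(\omega)| = |\langle E_A(\omega)x, E_A(\omega)z\rangle| \le \|E_A(\omega)x\|\,\|E_A(\omega)z\| = \sqrt{E_x(\omega)}\,\sqrt{E_z(\omega)}$, and then one sums over an arbitrary finite $\Bor(\RR)$-partition of $\omega$, applies Cauchy--Schwarz, and takes the supremum defining $|E_{x,z}|(\omega)$. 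Now $E_x$ is a finite measure ($E_x(\RR) = \|x\|^2$) and, since $z \in \Dom(f(A))$, one has $\int_\RR |f|^2 \, dE_z = \|f(A)z\|^2 < \infty$, i.e. $f \in \calL^2(E_z)$. Feeding $\mu = |E_{x,z}|$, $\mu_1 = E_x$, $\mu_2 = E_z$, $f_1 \equiv 1$, $f_2 = f$ into the generalized Schwarz inequality (Lemma~\ref{lem:III:24}) gives $\int_\RR |f| \, d|E_{x,z}| \le \|x\|\,\|f(A)z\| < \infty$, which is precisely $\overline f \in \calL^1(E_{x,z})$; in particular $\overline f \, dE_{x,z}$ is a well-defined complex measure.

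For \eqref{eq:IV:2.6} I would set $\omega_n := \{ t \in \RR : |f(t)| \le n\}$ and $f_n := f\chi_{\omega_n}$, so each $f_n$ is bounded and, by the cited bounded-function version of the lemma, $E_{x, f_n(A)z} = \overline{f_n}\, dE_{x,z}$, i.e. $\langle E_A(\omega) x, f_n(A)z\rangle = \int_\omega \overline{f_n}\, dE_{x,z}$ for every $\omega \in \Bor(\RR)$. Letting $n \to \infty$: on the left, $\|f(A)z - f_n(A)z\|^2 = \int_{\RR \setminus \omega_n} |f|^2 \, dE_z \to 0$ (since $z \in \Dom(f(A))$ and $|f|^2 \in \calL^1(E_z)$), so $f_n(A)z \to f(A)z$ in $\calH$ and hence $\langle E_A(\omega)x, f_n(A)z\rangle \to \langle E_A(\omega)x, f(A)z\rangle = E_{x, f(A)z}(\omega)$; on the right, $|\overline{f_n}| \le |f| \in \calL^1(|E_{x,z}|)$ by the previous step, so dominated convergence gives $\int_\omega \overline{f_n}\, dE_{x,z} \to \int_\omega \overline f\, dE_{x,z}$. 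Thus $E_{x, f(A)z}(\omega) = \int_\omega \overline f \, dE_{x,z}$ for all $\omega$, which is \eqref{eq:IV:2.6}.

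The only point needing genuine care is the bounded-function case invoked above (the footnoted reference): concretely, for bounded Borel $g$ one checks $\langle E_A(\omega)x, g(A)z\rangle = \langle x, (\chi_\omega g)(A) z\rangle = \overline{\int_\RR \chi_\omega g \, dE_{z,x}} = \int_\omega \overline g \, dE_{x,z}$, using $E_A(\omega) g(A) = (\chi_\omega g)(A)$, the conjugate-symmetry of $\langle\cdot,\cdot\rangle$, and $\overline{E_{z,x}} = E_{x,z}$ (which in turn follows from the self-adjointness of each $E_A(\sigma)$). If one prefers, one can skip the truncation entirely and run this same computation for an arbitrary Borel $f$ using the weak form of FCTh, which already furnishes $\langle g(A)z, w\rangle = \int_\RR g \, dE_{z,w}$ for $z \in \Dom(g(A))$ and $w \in \calH$; the integrability obtained in the second paragraph is exactly what makes this passage legitimate. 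Everything beyond this is routine bookkeeping with conjugation conventions.
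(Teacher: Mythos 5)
Your proof is correct, and it follows exactly the route the paper itself indicates: the paper leaves this lemma as an exercise with the hint ``use the version with $f$ bounded'' (Rudin, Lemma~13.23), and your truncation argument $f_n = f\chi_{\{|f|\le n\}}$, combined with the variation estimate $|E_{x,z}|(\omega)\le\sqrt{E_x(\omega)}\sqrt{E_z(\omega)}$ and Lemma~\ref{lem:III:24} to secure $f\in\calL^1(|E_{x,z}|)$ (which is what licenses the dominated-convergence passage on the right-hand side), is a complete and correct implementation of that hint.
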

\begin{proof}
We leave this result temporarily as an exercise for the reader (hint: use the version with $f$ bounded --- see \cite[Lemma 13.23]{Rudin1991}), however  the proof is planned in one of the next versions (in Appendix).
\end{proof}

The next result generalizes the above one. It will be an important tool for us, because it "hides" in some sense the result \eqref{eq:IV:2.4}, being in fact the isometricity part of the assertion (iii) of our theorem.

\begin{lemma} \label{lem:IV:8}
Let $A$ be a s.a. operator in $\calH$, $E$ -- its projection-valued spectral measure, $f,g : \RR \to \CC$ -- Borel functions and $x \in \Dom \big( f(A) \big)$, $y \in \Dom \big( g(A) \big)$. Then 
\begin{equation} 
	\label{eq:IV:2.7}
	f \overline{g} \in \calL^1(E_{x,y}) \quad \text{and} \quad
	E_{f(A) x, f(A) y} = f \overline{g} \ud E_{x,y}.
\end{equation}
\end{lemma}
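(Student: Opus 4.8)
The plan is to reduce Lemma~\ref{lem:IV:8} to the already-established Lemma~\ref{lem:IV:7} by a two-step application of that lemma, exploiting the fundamental relations $f(A) x \in \Dom\big(\overline{f}(A)\big)$ coming from the functional calculus. First I would record the needed consequence of FCTh: if $x \in \Dom\big(f(A)\big)$, then for any Borel $g$ one has $f(A)x \in \Dom\big(g(A)\big)$ iff $g f \in \calL^2(E_x)$, and in particular $f(A) x \in \Dom\big(\overline{h}(A)\big)$ whenever $h$ is bounded; this is the standard "$g(A) f(A) \subset (gf)(A)$" together with the domain characterization \eqref{eq:II:1.3}. The key identity I want is: for $x \in \Dom\big(f(A)\big)$ and any vector $z$,
\[
	E_{z,\, f(A)x} = \overline{f}\ud E_{z,x},
\]
which is exactly Lemma~\ref{lem:IV:7} with the roles of the two vectors as stated there.

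The main computation then goes as follows. Fix $x \in \Dom\big(f(A)\big)$, $y \in \Dom\big(g(A)\big)$. Apply Lemma~\ref{lem:IV:7} once, with the Borel function $f$ and the two vectors being $y$ (in the "$x$" slot of that lemma) and $x$ (in the "$z$" slot): since $x \in \Dom\big(f(A)\big)$ we get $\overline f \in \calL^1(E_{y,x})$ and $E_{y,\, f(A)x} = \overline f \ud E_{y,x}$. Taking complex conjugates of the scalar complex measures (using $E_{u,v} = \overline{E_{v,u}}$, which is part of the "weak complex measure" properties of $E$) this gives $E_{f(A)x,\, y} = f \ud E_{x,y}$ and $f \in \calL^1(E_{x,y})$. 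Now apply Lemma~\ref{lem:IV:7} a second time, this time with the Borel function $g$ and with the two vectors being $f(A)x$ (in the "$x$" slot) and $y$ (in the "$z$" slot): the hypothesis there requires $y \in \Dom\big(g(A)\big)$, which holds, so we obtain $\overline g \in \calL^1\big(E_{f(A)x,\, y}\big)$ and
\[
	E_{f(A)x,\, g(A)y} = \overline g \ud E_{f(A)x,\, y} = \overline g \cdot f \ud E_{x,y} = f\overline g \ud E_{x,y},
\]
where the middle equality substitutes the complex measure identity from the first step and the last equality is just the chain rule for densities of complex measures (multiplying a density by a bounded-on-a-full-measure-set function; here one checks $\overline g \in \calL^1$ of the measure $f\ud E_{x,y}$, i.e. $f\overline g \in \calL^1(E_{x,y})$). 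That $f\overline g \in \calL^1(E_{x,y})$ is precisely the first assertion of \eqref{eq:IV:2.7}, and the displayed equality is the second.

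The main obstacle I anticipate is purely bookkeeping about $\sigma$-finiteness and integrability when iterating Lemma~\ref{lem:IV:7}: the second application needs $\overline g \in \calL^1\big(E_{f(A)x,\,y}\big) = \calL^1\big(f\ud E_{x,y}\big)$, and one must be careful that the hypothesis "$y \in \Dom\big(g(A)\big)$" is genuinely what Lemma~\ref{lem:IV:7} consumes to deliver this, rather than needing an a priori integrability of $f\overline g$ against $E_{x,y}$. In fact Lemma~\ref{lem:IV:7} \emph{produces} the integrability $\overline g \in \calL^1$ of the relevant measure as part of its conclusion, so there is no circularity; the only real care is that the density manipulation $\overline g \ud(f\ud E_{x,y}) = f\overline g\ud E_{x,y}$ is a legitimate identity of complex measures, which follows from the standard change-of-density rule once both sides are known to be genuine complex (finite total variation) measures --- and that finiteness is again guaranteed by the two invocations of Lemma~\ref{lem:IV:7}. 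A secondary point to state cleanly is the conjugation symmetry $E_{u,v}=\overline{E_{v,u}}$ used to pass between the "$x$-$z$" orientation of Lemma~\ref{lem:IV:7} and the orientation needed here; this is immediate from $\langle E(\omega)u,v\rangle = \overline{\langle E(\omega)v,u\rangle}$ since each $E(\omega)$ is an orthogonal projection.
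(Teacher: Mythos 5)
Your proposal is correct and follows essentially the same route as the paper: two applications of Lemma~\ref{lem:IV:7} (first with $(f,y,x)$, then conjugating to get $E_{f(A)x,y}=f\ud E_{x,y}$, then with $(g,f(A)x,y)$), combined with the change-of-density identity and the chain of equivalences yielding $f\overline{g}\in\calL^1(E_{x,y})$. The only cosmetic difference is that you correctly read the conclusion as concerning $E_{f(A)x,\,g(A)y}$ (the subscript $f(A)y$ in the stated lemma is a typo), which is exactly what the paper's own proof establishes.
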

\begin{proof}
By previous lemma (for $f,y,x$) $\overline{f} \in \calL^1(E_{y,x})$ and $E_{y, f(A)x} = \overline{f} \ud E_{y,x}$, hence
\begin{equation} 
	\label{eq:IV:2.8}
	E_{f(A)x, y} = 
	\overline{\overline{f} \ud E_{y,x}} =
	f \ud E_{x,y}.
\end{equation}
Now, by the same lemma (for $g, f(A), y$) $\overline{g} \in \calL^1 (E_{f(A)x,y})$ and
\[
	E_{f(A)x, g(A) y} = \overline{g} \ud E_{f(A) x, y}
\]
so, by \eqref{eq:IV:2.8} we get \eqref{eq:IV:2.7}, using also
\begin{align*}
	\overline{g} \in \calL^1(E_{f(A) x, y}) \quad
	&\Leftrightarrow \quad
	|g| \in \calL^1(|E_{f(A) x, y}|) =
	\calL^1(|f \ud E_{x, y}|) =
	\calL^1(|f| \ud |E_{x, y}|) \\
	&\Leftrightarrow \quad
	|f| \cdot |\overline{g}| \in \calL^1(|E_{x, y}|) \\
	&\Leftrightarrow \quad
	f \cdot \overline{g} \in \calL^1(E_{x, y}). \qedhere
\end{align*}
\end{proof}

\begin{proof}[Proof of Theorem~\ref{thm:IV:6}]
Let $E = E_A$, so by the definition of the spectral matrix measure $M$ for $A$ and $\vec{\varphi}$ we have
\begin{equation} 
	\label{eq:IV:2.9}
	M_{ij} = E_{\varphi_j, \varphi_i} \quad \text{for any }
	i,j=1, \ldots,d.
\end{equation}
If $f \in \calL^2_\Sigma(M)$ then we have (definition of $\calL^2_\Sigma$):
\[
	\forall_{j=1,\ldots, d} \
	f_j \in \calL^2(M_{jj}) = \calL^2(E_{\varphi_j}).
\]
But by "the functional calculus" for s.a. operator we know that $x \in \Dom \big( h(A) \big)$ iff $h \in \calL^2(E_x)$ for Borel $h$ and $x \in \calH$. Hence $\varphi_j \in \Dom \big( f_j(A) \big)$ for any $j=1,\ldots,d$, i.e. (i) holds. Moreover we have
\[
	\tnorm{f}_M^2 =
	\dScalP{f,f}_M =
	\dScalP{f,f}_\Sigma =
	\sum_{i,j=1}^{d} 
	\int_\RR f_j \overline{f_i} \ud M_{ij}
\]
by Fact~\ref{fact:III:25}. Now, by \eqref{eq:IV:2.9} and Lemma~\ref{lem:IV:8} used for $f_j, f_i, \varphi_j$ and $\varphi_i$ for any $i,j=1,\ldots,d$ we obtain:
\begin{align*}
	\tnorm{f}_M^2 
	&=
	\sum_{i,j=1}^{d} \int_\RR f_j \overline{f_i} \ud E_{\varphi_j, \varphi_i} \\
	&=
	\sum_{i,j=1}^{d} E_{f_j(A) \varphi_j, f_i(A) \varphi_i}(\RR) \\
	&=
	\sum_{i,j=1}^{d} \langle E(\RR) f_j(A) \varphi_j, f_i(A) \varphi_i \rangle \\
	&=
	\bigg\langle \sum_{j=1}^{d} f_j(A) \varphi_j, \sum_{i=1}^{d} f_i(A) \varphi_i \bigg\rangle 
	= 
	\| \tilde{W}(f) \|^2
\end{align*}
-- so (ii) holds.

If moreover $f \in \calL^2_0(M)$ then $\tnorm{f}_M = -$ and by \eqref{eq:IV:2.4} (just proved) $\tilde{W} f = 0$. So $W$ is properly defined on $L^2_\Sigma(M)$, and for any $f \in \calL^2_\Sigma(M)$
\[
	\| [f] \|_M = 
	\tnorm{f}_M =
	\| \tilde{W} \| =
	\| W [f] \|.
\]
So $W$ is a linear isometry.
\end{proof}

\vspace{5ex}

\subsection{The canonical spectral transformation ("CST") for $A$ and $\vec{\varphi}$} \label{sec:IV:3}
Consider again a s.a. operator $A$ in a Hilbert space $\calH$ and an ordered system of vectors $\vec{\varphi} \in \calH^d$ ($d \in \NN$). In the previous subsection we constructed the spectral matrix measure $E_{A, \vec{\varphi}}$ for $A$ and $\vec{\varphi}$ on $\Bor(\RR)$ $\sigma$-algebra of subsets of $\RR$. Moreover we constructed a transformation $W : L^2_\Sigma(E_{A, \vec{\varphi}}) \to \calH$ being a linear isometry (see Theorem~\ref{thm:IV:6}(iii)). Now, we shall extend $W$ to the whole $L^2(E_{A, \vec{\varphi}})$ to the so-called (here...) \emph{canonical spectral transformation} (\emph{CST} for short) \emph{for $A$ and $\vec{\varphi}$}.

Our main goal in Section~\ref{sec:IV} is to prove that just \emph{this CST} states a unitary equivalence between $A$ in $\calH$ and $T_\xx$ -- the multiplication by the identity ($x \mapsto x$) function on $\RR$ in the space $L^2(E_{A, \vec{\varphi}})$, if $\vec{\varphi}$ is cyclic for $A$ (being just the $\xx$MUE theorem for the finitely cyclic case).

We shall reach this goal in Section~\ref{sec:IV:4}, but here we \emph{do not need any cyclicity property}.

We start from the mentioned above extension of $W$.
\begin{theorem} \label{thm:IV:9}
Let $M := E_{A, \varphi}$. There exists exactly one operator $U \in \calB \big( L^2(M), \calH \big)$ satisfying
\begin{equation}
	\label{eq:IV:3.1}
	\forall_{f \in \calL^2_\Sigma(M)} \
	U [f] = \sum_{j=1}^d f_j(A) \varphi_j.
\end{equation}
The above unique $U$ is an isometry from $L^2(M)$ into $\calH$ ("onto the image of $U$").
\end{theorem}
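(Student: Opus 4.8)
The plan is to obtain $U$ as the continuous extension of the isometry $W$ from Theorem~\ref{thm:IV:6}(iii), which is already defined on $L^2_\Sigma(M)$. The key structural fact to invoke is Fact~\ref{fact:III:27}: the subspace $L^2_\Sigma(M)$ is \emph{dense} in $L^2(M)$. Since $W : L^2_\Sigma(M) \to \calH$ is a linear isometry, it is in particular bounded (with norm $1$, unless $L^2(M) = \{0\}$), hence uniformly continuous on the dense subspace $L^2_\Sigma(M)$ of the complete space $L^2(M)$, so by the standard bounded-linear-extension theorem (B.L.T.) it extends uniquely to a bounded linear operator $U \in \calB\big(L^2(M), \calH\big)$ with $\| U \| = \| W \| \leq 1$. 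I would spell this out: given $[f] \in L^2(M)$, pick any sequence $[f_n] \in L^2_\Sigma(M)$ with $[f_n] \to [f]$ in $L^2(M)$; then $\{ W[f_n] \}$ is Cauchy in $\calH$ because $\| W[f_n] - W[f_m] \| = \| [f_n] - [f_m] \|_M$ by isometry, so it converges to some vector in $\calH$; this limit is independent of the approximating sequence by the same isometry estimate, and the resulting map is linear and extends $W$.

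Next I would verify the two assertions of the statement. For the defining property \eqref{eq:IV:3.1}: if $f \in \calL^2_\Sigma(M)$, then $[f] \in L^2_\Sigma(M)$, so $U[f] = W[f] = \tilde{W} f = \sum_{j=1}^d f_j(A)\varphi_j$ directly from \eqref{eq:IV:2.5} and \eqref{eq:IV:2.3}, so \eqref{eq:IV:3.1} holds. For isometry of $U$ on all of $L^2(M)$: norm is continuous, $[f] \mapsto \| U[f]\|$ and $[f]\mapsto \|[f]\|_M$ are both continuous on $L^2(M)$, they agree on the dense subset $L^2_\Sigma(M)$ (where $U = W$ is an isometry by Theorem~\ref{thm:IV:6}(iii)), hence they agree everywhere; thus $U$ is a linear isometry from $L^2(M)$ into $\calH$, onto its image $\Ran U$.

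For uniqueness: suppose $U' \in \calB\big(L^2(M),\calH\big)$ also satisfies \eqref{eq:IV:3.1}. Then $U$ and $U'$ agree on $L^2_\Sigma(M)$, hence (both being continuous) on its closure, which is all of $L^2(M)$ by Fact~\ref{fact:III:27}; so $U' = U$.

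I do not expect any serious obstacle here — every ingredient (density of $L^2_\Sigma(M)$ from Fact~\ref{fact:III:27}, isometry of $W$ from Theorem~\ref{thm:IV:6}, completeness of $L^2(M)$ from the results of Section~\ref{sec:III:3}) has already been established. The only point requiring a little care is the trivial edge case $L^2(M) = \{0\}$ (equivalently $M(\RR) = 0$), where $W$ is the zero map and everything is vacuous; one may either dispose of it at the outset or note that the argument goes through unchanged with all norms equal to zero. If anything is "the hard part", it is purely bookkeeping: making sure the extension is stated for the isometry $W$ of Theorem~\ref{thm:IV:6}(iii) rather than for the pre-quotient map $\tilde W$, and confirming that \eqref{eq:IV:3.1} pins $U$ down on a \emph{dense} set — which it does, precisely because $\{[f] : f \in \calL^2_\Sigma(M)\} = L^2_\Sigma(M)$ is dense, not merely because polynomials or simple functions are dense.
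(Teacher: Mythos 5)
Your proposal is correct and follows essentially the same route as the paper: extend the isometry $W$ of Theorem~\ref{thm:IV:6}(iii) by the standard bounded-linear-extension theorem using the density of $L^2_\Sigma(M)$ (Fact~\ref{fact:III:27}) and the completeness of $L^2(M)$, then obtain the isometry of $U$ by passing to the limit along approximating sequences and the uniqueness by continuity plus density. The only difference is that you spell out the Cauchy-sequence construction and the trivial case $L^2(M)=\{0\}$, which the paper leaves to the cited abstract extension result.
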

\begin{proof}
The uniqueness of such $U$ is obvious from the continuity requirement and from the density of $L^2_\Sigma(M)$ in $L^2(M)$. The existence of such $U$ is the direct consequence of Theorem~\ref{thm:IV:6}(iii) and the abstract result on extension of a bounded linear map from a subspace $X_0$ of $X$ into $Y$ to the bounded linear map from the closure $\overline{X_0}$ into $Y$ for $X,Y$ -- Banach spaces. It remains only to prove that $U$ is an isometry (which is also standard and abstract ...). Using the continuity of $U$ and the density of $L^2_\Sigma(M)$, for any $[f] \in L^2(M)$ we choose $\{f_n \}_{n \geq 1}$ in $\calL^2_\Sigma(M)$ such that $[f_n] \xrightarrow{L^2(M)} [f]$, so $\| U [f_n] \| \to \| U [f] \|$, but $U$ was the extension of $W$ from Theorem~\ref{thm:IV:6}(iii), so also
\[
	\| U [f_n] \| =
	\| W [f_n] \| =
	\| [f_n] \|_M \to 
	\| [f] \|_M,
\]
hence $\|U [f] \| = \| [f] \|_M$.
\end{proof}

Note here, that this step -- the extension from $L^2\Sigma(M)$ onto $L^2(M)$ was not necessary in the case $d=1$, cause then we simply have $L^2_\Sigma(M) = L^2(M)$!

Now we are ready to formulate the announced definition.
\begin{definition} \label{def:IV:10}
Let $A$ be a s.a. operator in $\calH$ and $\vec{\varphi} \in \calH^d$. Then the unique $U \in \calB \big( L^2(E_{A, \vec{\varphi}}), \calH \big)$ satisfying \eqref{eq:IV:3.1} is called \emph{the canonical spectral transformation} (\emph{CST}) \emph{for $A$ and $\varphi$}. We denote it by
\begin{equation}
	\label{eq:IV:3.2}
	U_{A, \vec{\varphi}}.
\end{equation}
\end{definition}

\vspace{5ex}

\subsection{Vector polynomials and $\xx$MUE Theorem} \label{sec:IV:4}
Recall that the construction of the unitary transformation $U$ stating the unitary equivalence for the $d=1$ case of $\xx$MUE Theorem (see Section~\ref{sec:II}) was based on polynomials. -- We considered the subspace $\Pol_\mu(\RR) \underset{\lin}{\subset} L^2(\mu)$ for the spectral measure $\mu = E_{A, \varphi}$ (= $E_{A, (\varphi)}$, with "$\vec{\varphi} = (\varphi)$", adopting our $d$-dimensional notation to the case $d=1$), and this space played quite an important role. $\Pol_\mu(\RR)$ was just the space of classes $[f]$ of polynomials $f \in \Pol(\RR)$, $f : \RR \to \CC$. Now, for general $d \in \NN$, we should consider the appropriate "vector polynomials" $f : \RR \to \CC^d$, so we shall introduce a $\pm$ convenient notation for them:
\begin{itemize}
\item for $n \in \NN_0$ and $j=1,\ldots,d$ $\xx_j^n : \RR \to \RR^d$ is the "monomial"\footnote{Recall that $\xx^n : \RR \to \RR$ is given by $\xx^n(t) := t^n$ for $t \in \RR$ ($\xx^0 = \mathds{1}$).} $\xx^n$ on the $j$-th coordinate, and $0$ on the remaining ones, i.e.
\begin{equation}
	\label{eq:IV:4.1}
	\xx_j^n = \xx^n \cdot e_j
\end{equation}
where $e_j$ is the $j$-th vector of the canonical base $(e_1, \ldots, e_d)$ of $\CC^d$, $(e_j)_k = 
\begin{cases}
	1 & k=j \\
	0 & k \neq j
\end{cases}$.

\item $\Pol_d(\RR) := \big( \Pol(\RR) \big)^d$, so obviously we have
\begin{equation}
	\label{eq:IV:4.2}
	\Pol_d(\RR) =
	\lin \big( \{ \xx_j^n : n \in \NN_0, j=1,\ldots,d \} \big)
\end{equation}

\item if $M$ is a $d \times d$ matrix measure on $\Bor(\RR)$, such that $\Pol_d(\RR) \subset \calL^2(M)$, then
\begin{equation}
	\label{eq:IV:4.3}
	\Pol(M) := \{ [f] \in L^2(M) : f \in \Pol_d(\RR) \}.
\end{equation}
\end{itemize}

Before we formulate our main theorem -- $\xx$MUETh. we need a result which gives even more than the inclusion $\Pol_d(\RR) \subset \calL^2(M)$ for the most important for us choice of $M$.

So, we consider again a Hilbert space $\calH$.
\begin{lemma} \label{lem:IV:11}
Suppose that $A$ is s.a. in $\calH$ and $\vec{\varphi} \in \calH^d$. If $\varphi_1, \ldots, \varphi_d \in \Dom(A^\infty)$, then
\[
	\Pol_d(\RR) \subset \calL^2_\Sigma(E_{A, \vec{\varphi}}).
\]
\end{lemma}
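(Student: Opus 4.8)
The plan is to reduce the claim to a coordinatewise statement about scalar polynomials and the diagonal scalar measures $M_{jj}$, and then invoke the functional calculus characterization of $\Dom(A^\infty)$. Recall that by definition $\calL^2_\Sigma(M) = \{ f : \Omega \to \CC^d \text{ measurable} : \forall_{j} \ f_j \in \calL^2(M_{jj}) \}$, where here $M = E_{A, \vec{\varphi}}$ and $\Omega = \RR$, $\frakM = \Bor(\RR)$. A vector polynomial $f \in \Pol_d(\RR)$ has each coordinate $f_j \in \Pol(\RR)$, so the whole statement comes down to showing that $\Pol(\RR) \subset \calL^2(M_{jj})$ for each $j = 1, \ldots, d$. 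Since $\calL^2(M_{jj})$ is a linear space and $\Pol(\RR) = \lin\{ \xx^n : n \in \NN_0 \}$, it suffices to check that $\xx^n \in \calL^2(M_{jj})$ for every $n \in \NN_0$ and every $j$.

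Next I would identify $M_{jj}$ explicitly. By \eqref{eq:IV:2.9} (established in the proof of Theorem~\ref{thm:IV:6}, but really just the definition \eqref{eq:IV:2.1} of the spectral matrix measure together with \eqref{eq:I:spmes}) we have $M_{jj} = E_{\varphi_j, \varphi_j} = E_{A, \varphi_j}$, the ordinary (non-negative, finite) scalar spectral measure for $A$ and the vector $\varphi_j$. Now the functional calculus for self-adjoint operators (STh+FCTh, e.g. \cite[Thm.~13.24]{Rudin1991}, cf.\ \eqref{eq:II:1.3}) gives the domain characterization
\[
	\Dom\big( h(A) \big) = \Big\{ x \in \calH : \int_\RR |h|^2 \ud E_{A,x} < \infty \Big\} = \{ x \in \calH : h \in \calL^2(E_{A,x}) \}
\]
for any Borel function $h : \RR \to \CC$. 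Applying this with $h = \xx^n$ and $x = \varphi_j$, and using $\xx^n(A) = A^n$ (a standard consequence of FCTh, already used in Section~\ref{sec:II}), we get: $\varphi_j \in \Dom(A^n) \iff \xx^n \in \calL^2(E_{A, \varphi_j}) = \calL^2(M_{jj})$.

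Therefore, under the hypothesis $\varphi_1, \ldots, \varphi_d \in \Dom(A^\infty) = \bigcap_{n \in \NN_0} \Dom(A^n)$, we have $\varphi_j \in \Dom(A^n)$ for all $n$ and all $j$, hence $\xx^n \in \calL^2(M_{jj})$ for all $n$ and all $j$, hence $\Pol(\RR) \subset \calL^2(M_{jj})$ for each $j$, hence every $f \in \Pol_d(\RR)$ satisfies $f_j \in \calL^2(M_{jj})$ for each $j$; since each such $f$ is clearly $\Bor(\RR)$-measurable, this is exactly the statement $f \in \calL^2_\Sigma(E_{A, \vec{\varphi}})$, i.e.\ $\Pol_d(\RR) \subset \calL^2_\Sigma(E_{A, \vec{\varphi}})$. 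There is no real obstacle here — the only point requiring a little care is not to confuse the diagonal entries $M_{jj}$ (which are genuine finite measures) with the off-diagonal $M_{ij}$ (merely complex measures); the reduction to $\calL^2_\Sigma$ uses only the diagonal ones, which is precisely why the slightly stronger conclusion $\Pol_d(\RR) \subset \calL^2_\Sigma(M)$ (rather than just $\subset \calL^2(M)$) is available and will be convenient later when applying Theorem~\ref{thm:IV:6}.
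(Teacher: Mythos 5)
Your argument is correct and coincides with the paper's own proof: both reduce via linearity to checking $\xx^n \in \calL^2(M_{jj})$ for each $n$ and $j$, identify $M_{jj}$ as the scalar spectral measure $E_{A,\varphi_j}$, and then invoke the STh+FCTh domain characterization $\varphi_j \in \Dom(A^n) \Leftrightarrow \xx^n \in \calL^2(E_{A,\varphi_j})$. No gaps.
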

\begin{proof}
Denote $M = E_{A, \vec{\varphi}}$. By \eqref{eq:IV:4.2} it suffices to check that for any $j=1,\ldots,d$ and $n \in \NN_0$ $\xx_j^n \in \calL^2_\Sigma(M)$, so -- by \eqref{eq:IV:4.3} and by the definition of $\calL^2_\Sigma$ (see Section~\ref{sec:III:4}) we should only check, that $\xx^n \in \calL^2(M_{jj})$ for any $n \in \NN_0$ and $j=1,\ldots,d$. Fix $j$. By the definition of the spectral matrix measure $E_{A, \vec{\varphi}}$ we have $M_{jj} = E_{\varphi_j}$ where $E = E_A$ (-- the projection valued spectral measure for $A$). So by STh+FCTh, from $\varphi_j \in \Dom(A^\infty)$ for any $n \in \NN_0$ we have $\varphi_j \in \Dom \big( \xx^n(A) \big)$, which means that $\xx^n \in \calL^2(E_{\varphi_j}) = \calL^2(M_{jj})$.
\end{proof}

So, we see now, that assuming only that each term of $\vec{\varphi}$ is in $\Dom(A^\infty)$ we have the properly defined by \eqref{eq:IV:4.3} subspace $\Pol(M)$ of $L^2(M)$ for $M = E_{A, \vec{\varphi}}$. In particular, we have this if $\vec{\varphi}$ is cyclic for $A$.

\begin{theorem}[$\xx$MUE -- the general finitely cyclic case] \label{thm:IV:xMUE}
Suppose that $A$ is a s.a. finitely cyclic operator in $\calH$. If $\vec{\varphi}$ is a cyclic system for $A$, $M = E_{A, \vec{\varphi}}$ and $U = U_{A, \vec{\varphi}}$, then
\begin{enumerate}[(1)]
\item $\Pol_d(\RR) \subset \calL^2_\Sigma(M)$ and $\Pol(M)$ is dense in $L^2(M)$;

\item $U$ is the unique operator from the set of such $U' \in \calB \big( L^2(M), \calH \big)$, that
\begin{equation}
	\label{eq:IV:4.4}
	\forall_{\substack{n \in \NN_0\\j=1,\ldots,d}} \quad
	U' [\xx_j^n] = A^n \varphi_j;
\end{equation}

\item $U$ is a unitary transformation from $L^2(M)$ \emph{onto} $\calH$;

\item $A = U T_\xx U^{-1}$, where $T_\xx$ is the multiplication by $\xx$ operator in $L^2(M)$.
\end{enumerate}
\end{theorem}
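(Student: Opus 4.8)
The plan is to mirror, step for step, the two‑part proof of the cyclic case (Theorem~\ref{thm:II:xMUE}), with $\calL^2_\Sigma(M)$ and the vector polynomials $\xx_j^n$ playing the roles that $\calL^2(\mu)$ and the scalar monomials $\xx^n$ played there. The only genuinely new ingredient is the preliminary extension $W\leadsto U=U_{A,\vec{\varphi}}$ from $L^2_\Sigma(M)$ to all of $L^2(M)$, which is already available (Theorem~\ref{thm:IV:9}), together with the bookkeeping needed to stay inside $\calL^2_\Sigma(M)$.

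\emph{Parts (1) and (2): the formal/algebraic core.} Since $\vec{\varphi}$ is cyclic for $A$, each $\varphi_j\in\Dom(A^\infty)$, so Lemma~\ref{lem:IV:11} gives $\Pol_d(\RR)\subset\calL^2_\Sigma(M)$ and hence $\Pol(M)$ is well defined. Next I would compute the action of $U$ on the vector monomials: $\xx_j^n=\xx^n e_j\in\calL^2_\Sigma(M)$ has $k$-th coordinate $\xx^n$ for $k=j$ and $0$ otherwise, so \eqref{eq:IV:3.1} yields $U[\xx_j^n]=\xx^n(A)\varphi_j=A^n\varphi_j$ (using the standard $\xx^n(A)=A^n$ and $\varphi_j\in\Dom(A^n)$). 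Thus $U$ satisfies \eqref{eq:IV:4.4}, and by linearity $U(\Pol(M))=\lin\{A^n\varphi_j:n\in\NN_0,\ j=1,\dots,d\}=\lin\Orb_A(\vec{\varphi})$, which is dense in $\calH$ by the cyclicity of $\vec{\varphi}$.

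\emph{Parts (3) and the completion of (1), (2).} By Theorem~\ref{thm:IV:9}, $U$ is an isometry of the Hilbert space $L^2(M)$ into $\calH$, so $\Ran U$ is closed; as it contains the dense set $U(\Pol(M))$, we get $\Ran U=\calH$, i.e.\ $U$ is a unitary transformation of $L^2(M)$ onto $\calH$ — this is (3). Since $U^{-1}$ is then continuous, $\Pol(M)=U^{-1}\big(U(\Pol(M))\big)$ is dense in $L^2(M)$, completing (1). For the uniqueness in (2): any $U'\in\calB\big(L^2(M),\calH\big)$ satisfying \eqref{eq:IV:4.4} agrees with $U$ on every $[\xx_j^n]$, hence on $\Pol(M)$ by linearity and on all of $L^2(M)$ by continuity and the density just established; together with the fact (shown above) that $U$ itself satisfies \eqref{eq:IV:4.4}, this gives (2). (If $\calH=\{0\}$ everything is trivial; otherwise some $\varphi_j\neq0$, so $M_{jj}(\RR)=\|\varphi_j\|^2>0$ and $L^2(M)\neq\{0\}$, so the parts of Theorem~\ref{thm:III:28} invoked below apply.)

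\emph{Part (4), and the main obstacle.} As in the ``delicate'' part of the cyclic proof, I would avoid comparing unbounded operators directly and instead compare spectral resolutions, proving
\[
	\forall_{\omega\in\Bor(\RR)}\quad E_{T_\xx}(\omega)=E_{U^{-1}AU}(\omega),
\]
which by STh+FCTh is equivalent to $U^{-1}AU=T_\xx$, i.e.\ to the assertion $A=UT_\xx U^{-1}$ of (4). By Theorem~\ref{thm:III:28}(10) applied to the real-valued $F=\xx$ (with $\xx^{-1}(\omega)=\omega$) we have $E_{T_\xx}(\omega)=T_{\chi_\omega}$, while $E_{U^{-1}AU}(\omega)=U^{-1}E_A(\omega)U$; so it suffices to verify the bounded-operator identity $U\,T_{\chi_\omega}=E_A(\omega)\,U$ on $L^2(M)$, and by density of $\Pol(M)$ it is enough to check it on each $[\xx_j^n]$. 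On the left, $T_{\chi_\omega}[\xx_j^n]=[\chi_\omega\xx^n e_j]$, and since $|\chi_\omega\xx^n|\le|\xx^n|$ this function still lies in $\calL^2_\Sigma(M)$, so \eqref{eq:IV:3.1} gives $U\,T_{\chi_\omega}[\xx_j^n]=(\chi_\omega\xx^n)(A)\varphi_j$; on the right, $E_A(\omega)\,U[\xx_j^n]=\chi_\omega(A)\,\xx^n(A)\varphi_j=(\chi_\omega\cdot\xx^n)(A)\varphi_j$, using $f(A)g(A)\subset(fg)(A)$ from FCTh and that $\varphi_j$ lies in the relevant domains. The two coincide, which finishes the proof. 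I do not expect any deep obstacle here: the proof is a careful transcription of Section~\ref{sec:II}, and the only points requiring attention are the bookkeeping that keeps $\xx_j^n$ and $\chi_\omega\xx_j^n$ inside $\calL^2_\Sigma(M)$ (so that the formula \eqref{eq:IV:3.1} for $U$ may be used) and the domain-chasing behind $\chi_\omega(A)\xx^n(A)\varphi_j=(\chi_\omega\xx^n)(A)\varphi_j$ — exactly the step that was ``delicate'' already for $d=1$.
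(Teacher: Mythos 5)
Your proposal is correct and follows essentially the same route as the paper: Lemma~\ref{lem:IV:11} for the inclusion $\Pol_d(\RR)\subset\calL^2_\Sigma(M)$, the formula \eqref{eq:IV:3.1} evaluated at $\xx_j^n$ to get \eqref{eq:IV:4.4}, the isometry-plus-dense-range argument for unitarity and the density of $\Pol(M)$, and for (4) the comparison of spectral resolutions via $E_{T_\xx}(\omega)=T_{\chi_\omega}$ reduced to checking $U[\chi_\omega\xx_j^n]=E_A(\omega)A^n\varphi_j$ on the linearly dense monomial classes, with the same verification that $\chi_\omega\xx_j^n$ stays in $\calL^2_\Sigma(M)$. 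The only (harmless) addition is your explicit handling of the degenerate case $L^2(M)=\{0\}$, which the paper leaves implicit.
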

\begin{remark} \label{rem:IV:12}
Note, that we do not assume, that $\vec{\varphi}$ is a linearly independent system! So, we can choose such $\vec{\varphi}$, that $d >$ "dim space of cyclicity"!
\end{remark}
\begin{proof}
We already proved $\subset$ from (1) in Lemma~\ref{lem:IV:11}.

It is also proved that $U$ satisfies the conditions for $U'$ in (2) (still without the uniqueness), because by Theorem~\ref{thm:IV:9}, using \eqref{eq:IV:3.1} for $f = \xx^N_j$ we get $U [\xx^n_j] = \xx^n(A) \varphi_j$, but $\xx^n(A) = A^n$ by functional calculus (STh+FCTh), and
\begin{equation}
	\label{eq:IV:4.5}
	\forall_{\substack{n \in \NN_0\\j=1,\ldots,d}} \quad
	U [\xx_j^n] = A^n \varphi_j.
\end{equation}

By Theorem~\ref{thm:IV:9} we also know, that $U$ is an isometry from $L^2(M)$ onto $\tilde{\calH} \underset{\lin}{\subset} \calH$, so $\tilde{\calH}$ is a closed subspace of $\calH$ as an isometric image of the complete space $L^2(M)$. So, we shall use now the cyclicity of $\vec{\varphi}$ (see Definition~\ref{def:IV:1} and Fact~\ref{fact:IV:3}) -- it guarantees that $\lin \big( \Orb_A(\vec{\varphi}) \big)$ is dense in $\calH$. But $\lin \big( \Orb_A(\vec{\varphi}) \big) \subset \tilde{\calH}$ by \eqref{eq:IV:4.4} used here $U'=U$, hence $\tilde{\calH}$ is closed and dense in $\calH$, i.e. $\tilde{\calH} = \calH$. So, we just proved (3). Now, knowing that $U$ is unitary from $L^2(M)$ onto $\calH$, we also get easily the density from (1) and the uniqueness from (2). Indeed: we already know that $\lin \big( \Orb_A(\vec{\varphi}) \big)$ is dense in $\calH$, so -- by the unitarity of $U$ -- also $U^{-1} \Big( \lin \big( \Orb_A(\vec{\varphi}) \big) \Big)$ is dense in $L^2(M)$, but using \eqref{eq:IV:4.5} we get
\[
	U^{-1} \Big( \lin \big( \Orb_A(\vec{\varphi}) \big) \Big) =
	\lin \big( \{ [\xx_j^n] : n \in \NN_0, j=1,\ldots,d \} \big) =
	\Pol(M).
\]
Hence $\Pol(M)$ is dense, and the uniqueness of $U'$ from (2) follows directly from the "linearity + continuity + density" argument. It remains only to prove (4). The proof of (4) is very similar to the proof of the analogical part of the cyclic ($d=1$) case of Theorem (see Theorem~\ref{thm:II:xMUE} in Section~\ref{sec:II:1}). This similarity is mainly related to the similar "form" of spectral properties, including spectral projections, for the operators of multiplication by functions to its appropriate form in the case $d=1$ (see Section~\ref{sec:III:5}). 

By the uniqueness of the (projection valued) spectral measure for s.a. operator we should only prove that for any $\omega \in \Bor(\RR)$
\begin{equation}
	\label{eq:IV:4.6}
	E_A(\omega) = 
	U E_{T_\xx}(\omega) U^{-1}.
\end{equation}
Let us fix $\omega \in \Bor(\RR)$ and recall, that $E_{T_\xx}(\omega) = T_{\chi_\omega}$ (see Theorem~\ref{thm:III:28}(10)). Using the boundedness of $U$, $E_A(\omega)$, and $E_{T_{\chi_\omega}}$, and the linear density of the set $\{ [\xx_j^n] : n \in \NN_0, j=1,\ldots,d \}$ in $L^2(M)$, to get \eqref{eq:IV:4.6} it suffices to prove
\begin{equation}
	\label{eq:IV:4.7}
	E_A(\omega) U [\xx_j^n]= 
	U [\chi_\omega \xx_j^n]
\end{equation}
for any $n \in \NN_0$ and $j=1,\ldots,d$, because
\[
	E_{T_\xx}(\omega) [\xx_j^n] =
	T_{\chi_\omega} [\xx_j^n] =
	[\chi_\omega \xx_j^n].
\]
But (fortunately...) $\chi_\omega \xx_j^n \in \calL^2(M)$. -- Observe first that the unique non-zero term of $\chi_\omega \xx_j^n$ can be only the $j$-th term equal to $\chi_\omega \xx^n$, but $|\chi_\omega \xx^n| \leq |\xx^n|$ and $\xx^n \in \calL^2(M_{jj})$ because $\xx^n \in \calL^2_{\Sigma}(M)$, by Lemma~\ref{lem:IV:11} (or by (1) -- already proved). So also $\chi_\omega \xx^n \in \calL^2(M_{jj})$, which proves that $\chi_\omega \xx_j^n \in \calL^2_\Sigma(M)$.

So, we can easily prove \eqref{eq:IV:4.7} using the explicit formula for $U = U_{A, \vec{\varphi}}$ on $L^2_\Sigma(M)$, which is provided by Theorem~\ref{thm:IV:9} and Definition~\ref{def:IV:10}. -- Namely, for any $n \in \NN_0$ and $j=1,\ldots,d$ we have:
\begin{equation}
	\label{eq:IV:4.8}
	U [\chi_\omega \xx_j^n] = 
	(\chi_\omega \cdot \xx^n)(A) \varphi_j,
\end{equation}
so by \eqref{eq:IV:4.5}, we will get \eqref{eq:IV:4.7}, if we check
\begin{equation}
	\label{eq:IV:4.9}
	E_A(\omega) A^n \varphi = 
	(\chi_\omega \cdot \xx^n)(A) \varphi_j.
\end{equation}
Let us repeat here the argument from the proof of Section~\ref{sec:II}, which uses the standard FCTh fact on multiplication of two functions of the s.a. operator:
\[
	E_A(\omega) A^n =
	\chi_\omega(A) \xx^n(A) \subset
	(\chi_\omega \xx^n)(A),
\]
but $\varphi_j$ belongs to the domains of both sides\footnote{We know this simply from the formulae \eqref{eq:IV:4.5} and \eqref{eq:IV:4.8} + the boundedness of $E_A(\omega)$, but the deeper reason for it is just the property (i) from Theorem~\ref{thm:IV:6} for any $f \in \calL^2_\Sigma(M)$, which allowed us to properly define $U = U_{A, \vec{\varphi}}$.}, so \eqref{eq:IV:4.8} holds.
\end{proof}

\newpage
\begin{appendix}

\section{The restriction of the matrix measure to a subset and the related $L^2$-space} \label{sec:A}
Consider $\Omega$ -- a set, $\frakM$ -- a $\sigma$-algebra of subsets of $\Omega$. For any $\Omega' \in \frakM$ denote $\frakM_{\Omega'} := \{ \omega \in \frakM : \omega \subset \Omega' \}$\footnote{Note, that we also have $\frakM_{\Omega'} = \{ \omega \cap \Omega' : \omega \in \frakM \}$.}. Surely $\frakM_{\Omega'}$ is a $\sigma$-algebra of subsets of $\Omega'$. Assume, moreover, that $M : \frakM \to \Mat{d}{\CC}$ is a matrix measure on $\frakM$ ("on $\Omega$"). Then, obviously, $\restr{M}{\frakM_{\Omega'}}$ is a matrix measure on $\frakM_{\Omega'}$ ("on $\Omega'$"). We denote it by $M_{\Omega'}$, i.e.
\[
	M_{\Omega'} := \restr{M}{\frakM_{\Omega'}},
\]
and we call it \emph{the restriction of $M$ to $\Omega'$}. We now "fix" $\Omega'$, so let us simplify the notation as follows: $\frakM' := \frakM_{\Omega'}, M' := M_{\Omega'}$. In particular, we get
\begin{equation}
	\label{eq:A:1}
	\tr_{M'} = \restr{\tr_M}{\frakM'}
\end{equation}
Let us denote also:
\[
	\calL^2_{\Omega'}(M) := 
	\{ f \in \calL^2(M): \forall_{t \in \Omega \setminus \Omega'} \ f(t) = 0 \}.
\]
Our "basic" Hilbert space is $L^2(M)$, and the symbol $[\cdot]$ is used to denote the classes of functions in the sense used in $\calL^2(M)$ for $L^2(M)$. But we consider here also the space of $\CC^d$-vector functions $\calL^2(M')$ (on $\Omega'$, measurable with respect to $\frakM'$ etc. -- see Section~\ref{sec:III:2}) and the appropriate Hilbert space $L^2(M')$ of classes of such functions. To distinguish these two kinds of classes we use here the notation $[\cdot]'$ for $L^2(M')$. 

If $d_{M,i,j}$ for $i,j=1,\ldots,d$, and $D_M$ (a trace density) are chosen as in Section~\ref{sec:III:1}, then, by \eqref{eq:A:1} we see that we can choose $D_{M'}$ -- a trace density for $M'$ -- simply as
\begin{equation}
	\label{eq:A:2}
	D_{M'} := \restr{D_M}{\Omega'}.
\end{equation}

For any $g : \Omega' \to \CC^d$ let us denote by $g_{\mathrm{ext}}$ the "extension by $0$ of $g$ to $\Omega$", i.e. $g_{\ext} : \Omega \to \CC^d$
\[
	g_{\ext}(t) := 
	\begin{cases}
		g(t) & \text{for } t \in \Omega' \\
		0 & \text{for } t \in \Omega \setminus \Omega'.
	\end{cases}
\]
Assume that $f \in \calL^2(M')$. So $g$ is $\frakM'$-measurable and thus we easily see that $g_\ext$ is $\frakM$-measurable. Now by \eqref{eq:A:1}, \eqref{eq:A:2}
\begin{equation}
	\label{eq:A:3}
	\dScalP{g_\ext}_M = 
	\int_{\Omega'} \langle D_{M'}(t) g(t), g(t) \rangle_{\CC^d} \ud \tr_{M'}(t) =
	\dScalP{g}_{M'},
\end{equation}
so $g_\ext \in \calM^2_{\Omega'} \subset \calL^2(M)$, because $g \in \calL^2(M')$, i.e., $\dScalP{g}_{M'} < \infty$. Moreover $\calL^2(M') \ni g \mapsto [g_\ext] \in L^2(M)$ is a well-defined, linear map and by \eqref{eq:A:3} its kernel equals $\calL_0(M')$. So, there exists a unique linear factorisation to the quotient space $L^2(M) = \calL^2(M)/\calL^2_0(M)$
\[
	\Ext_{\Omega'} : L^2(M') \to L^2(M)
\]
of the above map. It is given (and is well-defined) by
\begin{equation}
	\label{eq:A:4}
	\Ext_{\Omega'}([g]') = [g_\ext], \quad 
	g \in \calL^2(M'),
\end{equation}
and, moreover, again by \eqref{eq:A:3}, it is an isometry. Denoting now $L^2_{\Omega'}(M) := \Ran(\Ext_{\Omega'})$ and using $I_{\Omega'}$ to denote $\Ext_{\Omega'}$ just treated as the map between spaces $L^2(M')$ and $L^2_{\Omega'}(M)$, we obtain:
\begin{observation} \label{obs:A:1}
$L^2_{\Omega'}(M)$ is a Hilbert space (a closed subspace of $L^2(M)$) and $I_{\Omega'}$ is a unitary transformation.
\end{observation}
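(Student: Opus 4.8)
The plan is to obtain everything from the single fact, just established above, that $\Ext_{\Omega'}$ is a linear isometry of $L^2(M')$ into $L^2(M)$, combined with the completeness of $L^2(M')$. So the first thing I would record is that $M' = M_{\Omega'}$ is itself a $d\times d$ matrix measure on the $\sigma$-algebra $\frakM'$ of subsets of $\Omega'$ (this was noted when $M_{\Omega'}$ was introduced), and therefore the entire construction of Subsections~\ref{sec:III:2}--\ref{sec:III:3} applies verbatim to $M'$; in particular, by the completeness theorem proved there (Fact~\ref{fact:III:19} and the surrounding discussion), $L^2(M')$ is a Hilbert space.

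Next I would settle the unitarity of $I_{\Omega'}$. Since $\Ext_{\Omega'}$ is a linear isometry it is in particular injective, and $I_{\Omega'}$ is by definition nothing but $\Ext_{\Omega'}$ with its codomain restricted to its own range $L^2_{\Omega'}(M) = \Ran(\Ext_{\Omega'})$; hence $I_{\Omega'}$ is a linear bijection of $L^2(M')$ onto $L^2_{\Omega'}(M)$ which still satisfies $\|I_{\Omega'}[g]'\|_M = \|[g]'\|_{M'}$ for all $g \in \calL^2(M')$ (this is exactly \eqref{eq:A:3}). A norm-preserving linear bijection between inner product spaces is unitary: by the polarization identity it preserves the scalar product, and it is onto by construction. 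This gives the second assertion.

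For the first assertion I would argue that $\Ext_{\Omega'}$, being linear and norm-preserving, is distance-preserving, so $L^2_{\Omega'}(M)$ with the metric inherited from $L^2(M)$ is isometric (as a metric space) to $L^2(M')$, which is complete; hence $L^2_{\Omega'}(M)$ is complete. A complete subset of a metric space is closed, so $L^2_{\Omega'}(M)$ is a closed linear subspace of $L^2(M)$, and a closed linear subspace of a Hilbert space, equipped with the restricted scalar product, is itself a Hilbert space.

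I do not expect a genuine obstacle here; the statement is essentially a formal consequence of the preceding paragraph of the text. The only point that needs a word of care is the invocation of completeness: one should be explicit that the general $L^2$-completeness result is being applied to the matrix measure $M'$ on $(\Omega',\frakM')$ — not merely to $M$ on $(\Omega,\frakM)$ — before concluding that the isometric image $L^2_{\Omega'}(M)$ is complete.
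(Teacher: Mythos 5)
Your proposal is correct and follows exactly the route the paper intends: the paper states Observation~\ref{obs:A:1} as an immediate consequence of $\Ext_{\Omega'}$ being a linear isometry of the complete space $L^2(M')$ (completeness coming from applying the Section~\ref{sec:III:3} theory to the matrix measure $M'$ on $\frakM'$), so that its range is complete, hence closed, and the range-restricted map is a surjective isometry, hence unitary by polarization. You have merely made explicit the steps the paper leaves implicit, including the worthwhile remark that the completeness theorem must be invoked for $M'$ on $(\Omega',\frakM')$ rather than for $M$ itself.
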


Let us find now some relations between $L^2_{\Omega'}(M)$, $\calL^2_{\Omega'}(M)$ and $\calL^2(M')$. 

We have just proved that 
\[
	\{ g_\ext : g \in \calL^2(M') \} \subset \calL^2_{\Omega'}(M).
\]
But if $f \in \calL^2_{\Omega'}(M)$, then obviously $\restr{f}{\Omega'}$ is $\frakM'$ measurable and using \eqref{eq:A:3} to $g := \restr{f}{\Omega'}$ and $g_\ext = f$ we get $\restr{f}{\Omega'} \in \calL^2(M')$. Hence
\begin{equation}
	\label{eq:A:5}
	\{ g_\ext : g \in \calL^2(M') \} = \calL^2_{\Omega'}(M).
\end{equation}
Thus, recalling also that 
\[
	L^2_{\Omega'}(M) = 
	\Ran(\Ext_{\Omega'}) = 
	\{ [g_\ext] \in L^2(M) : g \in \calL^2(M') \},
\]
by \eqref{eq:A:4} and \eqref{eq:A:5} we obtain:
\begin{fact} \label{fact:A:2}
\begin{enumerate}[(i)]
	\item For any $g : \Omega' \to \CC^d$ we have $g \in \calL^2(M') \Leftrightarrow g_\ext \in \calL^2_{\Omega'}(M)$.
	
	\item $L^2_{\Omega'}(M) = \{ [f] \in L^2(M) : f \in \calL^2_{\Omega'}(M) \}$.
	
	\item If $f \in \calL^2_{\Omega'}(M)$, then $[f] = [(\restr{f}{\Omega'})_\ext] = I_{\Omega'}([\restr{f}{\Omega'}]')$, therefore
	\begin{equation}
		\label{eq:A:6}
		\forall_{f \in \calL^2_{\Omega'}(M)} \ I^{-1}_{\Omega'}([f]) = [\restr{f}{\Omega'}]'.
	\end{equation}
\end{enumerate}
\end{fact}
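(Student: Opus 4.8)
The plan is to read off all three statements directly from the machinery already assembled in the paragraph preceding the Fact, so that no new idea is required; the whole task is bookkeeping with the two quotient-class notations $[\cdot]$ (classes in $L^2(M)$) and $[\cdot]'$ (classes in $L^2(M')$). The three tools I would use are: the isometry identity \eqref{eq:A:3}, namely $\dScalP{g_\ext}_M = \dScalP{g}_{M'}$ for $g\in\calL^2(M')$; the set equality \eqref{eq:A:5}, $\{g_\ext : g\in\calL^2(M')\} = \calL^2_{\Omega'}(M)$; and the defining formula \eqref{eq:A:4} for $\Ext_{\Omega'}$, together with the facts that $I_{\Omega'}$ is $\Ext_{\Omega'}$ corestricted to its range $L^2_{\Omega'}(M)$ and that $I_{\Omega'}$ is unitary by Observation~\ref{obs:A:1}.

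For (i): the forward implication is already spelled out before the Fact --- if $g\in\calL^2(M')$ then $g_\ext$ is $\frakM$-measurable and \eqref{eq:A:3} gives $\dScalP{g_\ext}_M = \dScalP{g}_{M'} < \infty$, so $g_\ext\in\calL^2(M)$, and $g_\ext$ vanishes on $\Omega\setminus\Omega'$ by construction, hence $g_\ext\in\calL^2_{\Omega'}(M)$. Conversely, if $g_\ext\in\calL^2_{\Omega'}(M)$ then $g = \restr{(g_\ext)}{\Omega'}$ is $\frakM'$-measurable and \eqref{eq:A:3} applied to this $g$ gives $\dScalP{g}_{M'} = \dScalP{g_\ext}_M < \infty$, so $g\in\calL^2(M')$; equivalently, (i) is just \eqref{eq:A:5} read pointwise. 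For (ii) I would apply the quotient map $[\cdot]$ to \eqref{eq:A:5}: since $L^2_{\Omega'}(M) = \Ran(\Ext_{\Omega'}) = \{[g_\ext]\in L^2(M) : g\in\calL^2(M')\}$ and $\{g_\ext : g\in\calL^2(M')\} = \calL^2_{\Omega'}(M)$, these two descriptions of $L^2_{\Omega'}(M)$ coincide, which is exactly (ii).

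For (iii), given $f\in\calL^2_{\Omega'}(M)$ observe that $f$ vanishes on $\Omega\setminus\Omega'$, so $(\restr{f}{\Omega'})_\ext = f$ as functions on $\Omega$; hence $[f] = [(\restr{f}{\Omega'})_\ext]$, and by \eqref{eq:A:4} this equals $\Ext_{\Omega'}([\restr{f}{\Omega'}]') = I_{\Omega'}([\restr{f}{\Omega'}]')$. Since $I_{\Omega'}$ is unitary by Observation~\ref{obs:A:1} and $[f]\in L^2_{\Omega'}(M) = \Ran(I_{\Omega'})$ by (ii), applying $I_{\Omega'}^{-1}$ yields \eqref{eq:A:6}. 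I do not expect a genuine obstacle here; the only points that deserve a moment's care are keeping $[\cdot]$ and $[\cdot]'$ cleanly separated, recalling that $[\restr{f}{\Omega'}]'$ is a well-defined element of $L^2(M')$ precisely because $\restr{f}{\Omega'}\in\calL^2(M')$ by part (i), and invoking the well-definedness of $\Ext_{\Omega'}$ (its kernel being $\calL^2_0(M')$) already recorded at \eqref{eq:A:4}.
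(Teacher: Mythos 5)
Your proposal is correct and follows essentially the same route as the paper, whose proof of this Fact is precisely the paragraph preceding it: the two-sided use of \eqref{eq:A:3} to establish \eqref{eq:A:5}, the identification $L^2_{\Omega'}(M)=\Ran(\Ext_{\Omega'})$ for (ii), and the observation $(\restr{f}{\Omega'})_\ext=f$ combined with \eqref{eq:A:4} for (iii). Nothing is missing.
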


Let us summarise the main information on $I_{\Omega'}$.
\begin{proposition} \label{prop:A:3}
The map $I_{\Omega'} : L^2(M') \to L^2_{\Omega'}(M)$, defined by
\[
	I_{\Omega'}([g]') = [g_\ext], \quad 
	g \in \calL^2(M')
\]
is a unitary transformation and its inverse map $I^{-1}_{\Omega'}$ satisfies
\[
	I^{-1}_{\Omega'}([f]) = [\restr{f}{\Omega'}]', \quad
	f \in \calL^2_{\Omega'}(M).
\]
\end{proposition}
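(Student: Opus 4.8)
The plan is to observe that Proposition~\ref{prop:A:3} merely collects facts already established in the discussion preceding it, so the proof consists in assembling those pieces. First I would recall that $I_{\Omega'}$ is by construction the map $\Ext_{\Omega'}$ with its codomain restricted to $L^2_{\Omega'}(M) := \Ran(\Ext_{\Omega'})$; hence $I_{\Omega'}$ is automatically onto $L^2_{\Omega'}(M)$, and the formula $I_{\Omega'}([g]') = [g_{\ext}]$, together with its well-definedness (independence of the representative $g$ of $[g]'$), is exactly what was checked in deriving \eqref{eq:A:4}.

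Next I would establish unitarity. The key identity is \eqref{eq:A:3}: $\dScalP{g_{\ext}}_M = \dScalP{g}_{M'}$ for every $g \in \calL^2(M')$; taking square roots gives $\|[g_{\ext}]\|_M = \|[g]'\|_{M'}$, so $I_{\Omega'}$ is a linear isometry. An isometry is injective, so $I_{\Omega'}$ is a linear bijection of $L^2(M')$ onto $L^2_{\Omega'}(M)$; since the spaces are complex, norm preservation implies inner-product preservation by the polarization identity, and therefore $I_{\Omega'}$ is unitary. (This is exactly Observation~\ref{obs:A:1}, so one may simply cite it.)

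Finally, for the inverse formula, let $f \in \calL^2_{\Omega'}(M)$. By Fact~\ref{fact:A:2}(i) we have $\restr{f}{\Omega'} \in \calL^2(M')$, and since $f$ vanishes on $\Omega \setminus \Omega'$ we have $(\restr{f}{\Omega'})_{\ext} = f$; hence
\[
	I_{\Omega'}\big( [\restr{f}{\Omega'}]' \big) = \big[ (\restr{f}{\Omega'})_{\ext} \big] = [f],
\]
and applying $I^{-1}_{\Omega'}$ yields $I^{-1}_{\Omega'}([f]) = [\restr{f}{\Omega'}]'$. Since by Fact~\ref{fact:A:2}(ii) every element of $L^2_{\Omega'}(M)$ has the form $[f]$ with $f \in \calL^2_{\Omega'}(M)$, this determines $I^{-1}_{\Omega'}$ on all of $L^2_{\Omega'}(M)$, which is the claimed formula. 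No genuine obstacle arises: every ingredient was worked out before the statement, and the only mild points to watch are that surjectivity holds by the very definition of $L^2_{\Omega'}(M)$ as a range, and that the passage from norm preservation to unitarity uses the complex polarization identity.
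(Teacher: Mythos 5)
Your proposal is correct and follows essentially the same route as the paper: Proposition~\ref{prop:A:3} is indeed just a summary of the preceding construction, with the isometry coming from \eqref{eq:A:3}, surjectivity from the definition $L^2_{\Omega'}(M) := \Ran(\Ext_{\Omega'})$ (i.e.\ Observation~\ref{obs:A:1}), and the inverse formula from Fact~\ref{fact:A:2}. Your explicit verification that $(\restr{f}{\Omega'})_{\ext} = f$ for $f \in \calL^2_{\Omega'}(M)$ is exactly the computation behind Fact~\ref{fact:A:2}(iii), so nothing is missing.
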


\begin{remark} \label{rem:A:4}
One could ask, whether the last formula
\[
	I^{-1}_{\Omega'}([f]) = [\restr{f}{\Omega'}]'
\]
holds also for other $f \in \calL^2(M)$ such that $[f] \in L^2_{\Omega'}(M)$, and not only for $f \in \calL^2_{\Omega'}(M)$. The answer is YES. To check it, let us consider $f$ as above, and let $\tilde{f} \in \calL^2_{\Omega'}(M)$ be such that $[f] = [\tilde{f}]$. Then for 
\[
	\omega :=
	\big\{ t \in \Omega : f(t) - \tilde{f}(t) \notin \Ker D_M(t) \big\}
\]
we have $\tr_M(\omega) = 0$ by Fact~\ref{fact:III:12}, so also, by \eqref{eq:A:1} $0 = \tr_M(\omega \cap \Omega') = \tr_{M'}(\omega \cap \Omega')$. But 
\[
	\omega \cap \Omega' =
	\big\{ t \in \Omega' : \restr{f}{\Omega'}(t) - \restr{\tilde{f}}{\Omega'}(t) \notin \Ker D_{M'}(t) \big\},
\]
by \eqref{eq:A:2}. Again by Fact~\ref{fact:III:12} (for $\calL_0(M')$) we get 
\[
	[\restr{f}{\Omega'}]' = 
	[\restr{\tilde{f}}{\Omega'}] =
	I^{-1}_{\Omega'}([\tilde{f}]) =
	I^{-1}_{\Omega'}([f]).
\]
\end{remark}

\vspace{10ex}

\section{The part of the multiplication by function self-adjoint operator "in the subset" of $\RR$} \label{sec:B}
Let us recall first the abstract notion of the invariant space and of the restriction to such space for self-adjoint operators.

Let $\calH$ be a Hilbert space (the complex one) and $A$ -- a self-adjoint operator (possibly unbounded) in $\calH$. Let $E_A:\Bor(\RR) \to \calB(\calH)$ be its spectral resolution (the resolution of unity for $A$). For a closed linear subspace $\calH'$ of $\calH$ we define:
$\calH'$ is \emph{invariant} (or \emph{spectrally invariant}) \emph{for} $A$ iff
\begin{equation}
	\label{eq:B:1}
	\forall_{\omega \in \Bor(\RR)} \ 
	P_{\calH'} E_A(\omega) = E_{A}(\omega) P_{\calH'},
\end{equation}
where $P_{\calH'}$ denotes the orthogonal projection onto $\calH'$ in $\calH$. It can be proved (see e.g. \cite{Schmudgen2012} for some help...)
that for $\calH'$ as above 
\begin{equation}
	\label{eq:B:2}
	P_{\calH'} A \subset A P_{\calH'},
\end{equation}
so, in particular, if $x \in \Dom(A)$, then $P_{\calH'} x \in \Dom(A)$ and if $x \in \Dom(A) \cap \calH'$, then $A x \in \calH'$. Hence $\Dom(A) \cap \calH'$ is dense in $\calH'$. Thus the restriction of $A$ to $\Dom(A) \cap \calH'$
\[
	\calH' \supset \Dom(A) \cap \calH' \ni x \mapsto Ax \in \calH'
\]
defines a densely-defined linear operator in $\calH'$. It is well-known
that it is, moreover, self-adjoint. We call it \emph{the part of $A$ in $\calH'$} and denote it by
\begin{equation}
	\label{eq:B:3}
	\restr{A}{\calH'}.
\end{equation}
An important particular case of the invariant space for $A$ is each $\Ran E_A(G)$ for $G \in \Bor(\RR)$. We denote
\[
	(A)_G := \restr{A}{\Ran E_A(G)},
\]
and we call it \emph{the part of $A$ "in $G$"}.

Now, consider $\calH = L^2(M)$ for matrix measure $M$ on $\frakM$ for $\Omega$, $M : \frakM \to \Mat{d}{\CC}$. We shall consider here the self-adjoint operator $A = T_F$ in $L^2(M)$, where $F : \Omega \to \RR$ is an $\frakM$-measurable function (see Section~\ref{sec:III:5}). So, let us fix an arbitrary $G \in \Bor(\RR)$. Our goal in this section is to "identify" the operator $(T_F)_G$ -- the part of $T_F$ "in $G$".

Let us denote here
\[
	\Omega' := F^{-1}(G)
\]
and let us adopt the notation in Section~\ref{sec:A} for this $\Omega'$ (including the simplified ones: $\frakM', M'$). Define also an $\frakM'$-measurable function $F' : \Omega' \to \RR$ just by
\[
	F' := \restr{F}{\Omega'}
\]
and consider the operator $T_{F'}$ of the multiplication by $F'$ in $L^2(M')$. The following result is "almost obvious", but it needs careful checking ...

\begin{theorem} \label{thm:B:5}
\begin{enumerate}[(i)]
\item $\Ran E_{F_T}(G) = L^2_{\Omega'}(M)$

\item $(T_F)_G$ equals to the operator in $L^2_{\Omega'}(M)$ given by
\begin{align*}
	&\Dom \big( (T_F)_G \big) = 
	\big\{ [f] \in L^2_{\Omega'}(M) : f \in \calL^2_{\Omega'}, Ff \in \calL^2(M) \big\} \\
	&(T_F)_G[f] = [Ff], \quad \text{if } f \in \calL^2_{\Omega'}(M) \text{ and } Ff \in \calL^2(M).
\end{align*}

\item $(T_F)_G \sim T_{F'}$\footnote{$\sim$ denotes the unitary equivalence relation for operators.} and moreover
\begin{equation}
	\label{eq:B:4}
	(T_F)_G = I_{\Omega'} T_{F'} I_{\Omega'}^{-1}.
\end{equation}
\end{enumerate}
\end{theorem}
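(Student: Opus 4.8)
### Plan for the proof of Theorem~\ref{thm:B:5}

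The plan is to verify the three assertions in order, since (ii) and (iii) rest on (i), and (iii) is essentially a bookkeeping consequence of (ii) together with Proposition~\ref{prop:A:3}. For (i), recall from Theorem~\ref{thm:III:28}(10) that the spectral resolution of $T_F$ is $E_{F,M}(\omega) = T_{\chi_{F^{-1}(\omega)}}$, so $E_{T_F}(G) = T_{\chi_{\Omega'}}$ with $\Omega' = F^{-1}(G)$. Thus $\Ran E_{T_F}(G) = \Ran T_{\chi_{\Omega'}}$, and $T_{\chi_{\Omega'}}$ is the orthogonal projection (idempotent, self-adjoint by Theorem~\ref{thm:III:28}(5) since $\chi_{\Omega'}$ is real-valued and $\chi_{\Omega'}^2 = \chi_{\Omega'}$) onto $\{[\chi_{\Omega'} f] : f \in \calL^2(M)\}$. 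Now $\chi_{\Omega'} f$ vanishes off $\Omega'$, so it lies in $\calL^2_{\Omega'}(M)$; conversely every $f \in \calL^2_{\Omega'}(M)$ satisfies $f = \chi_{\Omega'} f$, hence $[f] = T_{\chi_{\Omega'}}[f]$. Combined with Fact~\ref{fact:A:2}(ii), this gives $\Ran E_{T_F}(G) = \{[f] : f \in \calL^2_{\Omega'}(M)\} = L^2_{\Omega'}(M)$, proving (i).

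For (ii), by the abstract definition of the part of a self-adjoint operator in an invariant subspace (Section~\ref{sec:B}), $(T_F)_G$ is the restriction of $T_F$ to $\Dom(T_F) \cap L^2_{\Omega'}(M)$. So I would show that $[f] \in \Dom(T_F) \cap L^2_{\Omega'}(M)$ if and only if $[f]$ has a representative $g \in \calL^2_{\Omega'}(M)$ with $Fg \in \calL^2(M)$. The "if" direction is immediate from \eqref{eq:III:5.1} and Fact~\ref{fact:A:2}(ii). For "only if": if $[f] \in L^2_{\Omega'}(M)$ then by Fact~\ref{fact:A:2}(ii) there is $g \in \calL^2_{\Omega'}(M)$ with $[g]=[f]$; and $[g] \in \Dom(T_F)$ means $Fg' \in \calL^2(M)$ for some representative $g'$ of $[g]$, but since $[g']=[g]$ and multiplication by $F$ maps $\calL^2_0(M)$ into itself (the remark in the proof of \eqref{eq:III:5.1}), we get $Fg \in \calL^2(M)$ as well — here one may also replace $g$ by $\chi_{\Omega'}g$ without changing the class, so $g \in \calL^2_{\Omega'}(M)$ can be assumed. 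Then $(T_F)_G[f] = T_F[g] = [Fg]$, which is the claimed formula. The mild subtlety — which I expect to be the main point to handle carefully — is the interplay between "having a representative in $\calL^2_{\Omega'}(M)$" and "being in $\Dom(T_F)$ with that same representative"; this is exactly the kind of $\calL^2_0(M)$-versus-$L^2(M)$ delicacy the paper has been flagging, and it is resolved by the invariance of $\calL^2_0(M)$ under $T_{\chi_{\Omega'}}$ and under multiplication by $F$, plus Remark~\ref{rem:A:4}.

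For (iii), I would transport the description in (ii) through the unitary $I_{\Omega'} : L^2(M') \to L^2_{\Omega'}(M)$ from Proposition~\ref{prop:A:3}, whose inverse is $I_{\Omega'}^{-1}([f]) = [\restr{f}{\Omega'}]'$ for $f \in \calL^2_{\Omega'}(M)$ (and, by Remark~\ref{rem:A:4}, for any $f\in\calL^2(M)$ with $[f]\in L^2_{\Omega'}(M)$). Given $[h]' \in \Dom(T_{F'})$, i.e. $h \in \calL^2(M')$ with $F'h \in \calL^2(M')$, note $h_\ext \in \calL^2_{\Omega'}(M)$ (Fact~\ref{fact:A:2}(i)) and $(F'h)_\ext = F\,h_\ext$ since $F' = \restr{F}{\Omega'}$ and both vanish off $\Omega'$; hence $F\,h_\ext \in \calL^2_{\Omega'}(M) \subset \calL^2(M)$, so $I_{\Omega'}[h]' = [h_\ext] \in \Dom\big((T_F)_G\big)$ by (ii), and $(T_F)_G I_{\Omega'}[h]' = [F\,h_\ext] = [(F'h)_\ext] = I_{\Omega'}[F'h]' = I_{\Omega'} T_{F'}[h]'$. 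The reverse inclusion of domains is the same computation read backwards, using $\restr{(F f)}{\Omega'} = F' \restr{f}{\Omega'}$ for $f \in \calL^2_{\Omega'}(M)$. This establishes $(T_F)_G I_{\Omega'} = I_{\Omega'} T_{F'}$ on $\Dom(T_{F'})$ with matching domains, i.e. \eqref{eq:B:4}, and in particular the unitary equivalence $(T_F)_G \sim T_{F'}$.
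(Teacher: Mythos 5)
Your proposal is correct and follows essentially the same route as the paper: part (i) via $E_{T_F}(G)=T_{\chi_{\Omega'}}$ together with the observation that $f=\chi_{\Omega'}f$ for $f\in\calL^2_{\Omega'}(M)$ and Fact~\ref{fact:A:2}(ii); part (iii) via the identities $F'\cdot(\restr{f}{\Omega'})=\restr{Ff}{\Omega'}$ and $\big(F'(\restr{f}{\Omega'})\big)_\ext=Ff$, tracking domains through $I_{\Omega'}$ (you push forward with $I_{\Omega'}$ where the paper pulls back with $I_{\Omega'}^{-1}$, which is the same computation). Your treatment of (ii) is in fact more explicit than the paper's one-line appeal to the definition of the part in $G$, correctly resolving the representative-independence issue via the invariance of $\calL^2_0(M)$ under multiplication by $F$.
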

\begin{proof}
We know (see Theorem~\ref{thm:III:28}(10)), that $E_{T_F}(G) = T_{\chi_{\Omega'}}$, so $x \in \Ran E_{T_F}(G)$ iff $x = [\chi_{\Omega'} f]$ for some $f \in \calL^2(M)$. But if $f \in \calL^2(M)$, then $\chi_{\Omega'} f \in \calL^2_{\Omega'}(M)$ so $[\chi_{\Omega'} f] \in L^2_{\Omega'}(M)$ by Fact~\ref{fact:A:2}(ii), and we get "$\subset$" in (i). Similarly, consider $x \in L^2_{\Omega'}(M)$. By the same Fact~\ref{fact:A:2}(ii) we know, that $x = [f]$ soe some $f \in \calL^2_{\Omega'}(M)$, hence $f = \chi_{\Omega'} f$, and thus $x = [f] = [\chi_{\Omega'} f] = T_{\chi_{\Omega'}} [f]$, which shows "$\supset$". Now we easily get (ii) of the theorem by (i), by Fact~\ref{fact:A:2}(ii) and by the definition of the part "in $G$" of the operator. Note, that we use here also the fact, that the multiplication operator in $L^2(M)$ is well-defined, as the factorisation of the multiplication by $F$ in $\calL^2(M)$.

So, it remains to check \eqref{eq:B:4}. Let us write down the domains of both sides:
\begin{align*}
	&\mathrm{LHS} := 
	\big\{ [f] \in L^2_{\Omega'}(M) : f \in \calL^2_{\Omega'}, Ff \in \calL^2(M) \big\} \\
	&\mathrm{RHS} := 
	\big\{ x \in L^2_{\Omega'}(M) : I^{-1}_{\Omega'} x \in \Dom(T_{F'}) \big\}.
\end{align*}
When $x \in \mathrm{RHS}$, then for some $f \in \calL^2_{\Omega'}(M)$ $x = [f]$ and $I^{-1}_{\Omega'} [f] = [\restr{f}{\Omega'}]' \in \Dom(T_{F'})$ by Fact~\ref{fact:A:2}(iii), so $F'(\restr{f}{\Omega'}) \in \calL^2(M')$. But thanks to $f \in \calL^2_{\Omega'}(M)$
\begin{equation}
	\label{eq:B:5}
	F' \cdot (\restr{f}{\Omega'}) = \restr{F \cdot f}{\Omega'}, \quad \text{and} \quad
	\big( F' \cdot (\restr{f}{\Omega'}) \big)_\ext = F f,
\end{equation}
and by Fact~\ref{fact:A:2}(i) $Ff \in \calL^2(M)$, which gives $x \in \mathrm{LHS}$.

Now, consider some $x \in \mathrm{LHS}$. We have $x = [f]$ for some $f \in \calL^2_{\Omega'}(M)$ and $Ff \in \calL^2(M)$, thus again by Fact~\ref{fact:A:2} $I^{-1}_{\Omega'} x = [\restr{f}{\Omega'}]'$ and using \eqref{eq:B:5} we get $F' \cdot (\restr{f}{\Omega'}) \in \calL^2(M')$. Therefore $I^{-1}_{\Omega'} x \in \Dom(T_{F'})$, i.e., $x \in \mathrm{RHS}$. But moreover by \eqref{eq:B:5} we have (see \eqref{eq:A:4})
\[
	I_{\Omega'} T_{F'} I_{\Omega'}^{-1} x = 
	I_{\Omega'} [F' (\restr{f}{\Omega'})]' =
	[(F'(\restr{f}{\Omega'}))_\ext] =
	[Ff] = T_F x,
\]
i.e. $\mathrm{LHS} = \mathrm{RHS}$ and \eqref{eq:B:4} holds.
\end{proof}

\vspace{5ex}

\section{The absolute continuity and the a.c. spectrum of self-adjoint finitely cyclic operators} \label{sec:C}
We prove here an abstract result (Theorem~\ref{thm:C:7}) showing a simple and natural relation between some "measure theory" properties of the trace measure $\tr_{E_{A, \vec{\varphi}}}$ of the matrix measure $E_{A, \vec{\varphi}}$ for fin. cycl. self-adjoint operator $A$ and its cyclic system $\vec{\varphi}$ and some spectral properties of $A$ such as the absolute continuity in a subset of $\RR$ and properties of the a.c. spectrum.

Recall, that for a s.a. operator $A$ in a Hilbert space $\calH$ and $G \in \Bor(\RR)$ the symbol $(A)_G$ denotes the part of $A$ (see Section~\ref{sec:B}) in the (invariant) subspace $\calH_G(A) := \Ran E_A(G)$, where $E_A$ denotes as usual the spectral resolution for $A$. The notion \emph{$A$ is absolutely continuous (a.c.) in $G$} means that $H_G(A) \subset H_{\mathrm{ac}}(A)$, where
\[
	H_{\mathrm{ac}}(A) :=
	\{ x \in \calH : E_{A,x} \text{ is a.c. with respect to the Lebesgue measure on } \Bor(\RR) \},
\]
and $E_{A,x}$ denotes the scalar spectral measure for $A$ and $x$ (given by $E_{A,x}(\omega) = \|E_A(\omega) x\|^2$ for $\omega \in \Bor(\RR)$). Recall also that\footnote{We use the convention that the spectrum $\sigma(Z)$, of the zero operator $Z$ in the space $\{0\}$ is empty.} $\sigmaAC(A) := \sigma((A)_{\mathrm{ac}})$, where $(A)_{\mathrm{ac}} := \restr{A}{H_{\mathrm{ac}}(A)}$ (-- the part of $A$ in $H_{\mathrm{ac}}(A)$).

Suppose that $A$ is a finitely cyclic self-adjoint operator in a Hilbert space $\calH$ and $\vec{\varphi} \in \calH^d$ is a cyclic system for $A$ (with some $d \in \NN$). Let $M := E_{A, \vec{\varphi}}$ be the spectral matrix measure for $A$ and $\vec{\varphi}$ (see Definition~\ref{def:IV:5}), $M : \Bor(\RR) \to \Mat{d}{\CC}$. Denote $\mu := \tr_M$ -- the trace measure of $M$, and by $\muAC$, $\muS$ denote the absolutely continuous and the singular part of $\mu$ with respect to the Lebesgue measure $|\cdot|$ on $\Bor(\RR)$, respectively.

Consider $G \in \Bor(\RR)$. Denote by $\overline{G}^{\mathrm{Leb}}$ its "$|\cdot|$-closure" defined by
\begin{equation}
	\label{eq:C:1}
	\overline{G}^{\mathrm{Leb}} := 
	\big\{ 
		t \in \RR : \forall_{\varepsilon > 0} \
		|G \cap (t-\varepsilon; t+ \varepsilon)| > 0
	\big\}.
\end{equation}
Let $T_{\xx_G, M}$ be the multiplication by the function $\restr{\xx}{G}$ operator in the space $L^2(M_G)$ for the restriction $\restr{\xx}{G}$ to $G$ of the identity function $\xx : \RR \to \RR$, where $M_G$ is the restriction of $M$ to $G$ (see Section~\ref{sec:A}).

To formulate and prove the main result of this section let us study first some abstract problems related to some supports of measures and to the resolution of measure into its "ac" and "sing" parts with respect to another measure.

Suppose that $\frakM$ is a $\sigma$-algebra of subsets of $\Omega$ and $\mu, \nu$ are two measures on $\frakM$. Recall that $S \in \frakM$ is \emph{a minimal support of $\mu$ with respect to $\nu$} iff $S$ is a support of $\mu$ (i.e. $\mu(\Omega \setminus S) = 0$) and for each $S' \in \frakM$ such that $S'$ is a support of $\mu$ and $S' \subset S$, $\nu(S \setminus S') = 0$.

\begin{lemma} \label{lem:C:6}
Suppose that both $\mu$ and $\nu$ are $\sigma$-finite and $\muAC, \muS$ are the absolutely continuous and the singular part of $\mu$ with respect to $\nu$, respectively. Let $S_{\mathrm{ac}} \in \frakM$ be a minimal support of $\muAC$ with respect to $\nu$. then
\begin{enumerate}[(i)]
\item 
$\begin{aligned}[b]
	\forall_{\substack{\omega \in \frakM\\\muS(\omega) = 0}} \quad
	\big( \nu(\omega) = 0 \Rightarrow \mu(\omega) = 0 \big),
\end{aligned}$

\item
$\begin{aligned}[b]
	\forall_{\substack{\omega \in \frakM\\\omega \subset S_{\mathrm{ac}}}} \quad
	\big( \mu(\omega) = 0 \Rightarrow \nu(\omega) = 0 \big).
\end{aligned}$
\end{enumerate}
\end{lemma}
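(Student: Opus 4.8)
The plan is to reduce everything to the defining properties of the Lebesgue decomposition $\mu = \muAC + \muS$ with respect to $\nu$ together with the minimality of $S_{\mathrm{ac}}$; the $\sigma$-finiteness of $\mu$ and $\nu$ will play no active role beyond guaranteeing that these objects exist, so the argument is essentially bookkeeping with null sets.

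For (i): fix $\omega \in \frakM$ with $\muS(\omega) = 0$ and $\nu(\omega) = 0$. Since $\muAC \ll \nu$ by the definition of the absolutely continuous part, $\nu(\omega) = 0$ forces $\muAC(\omega) = 0$. Then additivity of $\mu = \muAC + \muS$ gives $\mu(\omega) = \muAC(\omega) + \muS(\omega) = 0$, which is the assertion. (Equivalently: on the ``$\muS$-null part'' of $\frakM$ one has $\mu = \muAC$, and $\muAC$ is absolutely continuous with respect to $\nu$.)

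For (ii), which is the only substantive point: fix $\omega \in \frakM$ with $\omega \subset S_{\mathrm{ac}}$ and $\mu(\omega) = 0$. From $\muAC \le \mu$ (because $\muS \ge 0$) we get $\muAC(\omega) = 0$. Now put $S' := S_{\mathrm{ac}} \setminus \omega \in \frakM$. I claim $S'$ is again a support of $\muAC$: indeed $\Omega \setminus S' = (\Omega \setminus S_{\mathrm{ac}}) \cup \omega$, and $\muAC(\Omega \setminus S_{\mathrm{ac}}) = 0$ since $S_{\mathrm{ac}}$ is a support of $\muAC$, while $\muAC(\omega) = 0$ by the previous line, so $\muAC(\Omega \setminus S') \le \muAC(\Omega \setminus S_{\mathrm{ac}}) + \muAC(\omega) = 0$. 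Moreover $S' \subset S_{\mathrm{ac}}$. Hence the minimality of $S_{\mathrm{ac}}$ (applied to the support $S'$ of $\muAC$ with $S' \subset S_{\mathrm{ac}}$) yields $\nu(S_{\mathrm{ac}} \setminus S') = 0$; and since $\omega \subset S_{\mathrm{ac}}$ we have $S_{\mathrm{ac}} \setminus S' = S_{\mathrm{ac}} \setminus (S_{\mathrm{ac}} \setminus \omega) = \omega$, so $\nu(\omega) = 0$, as required.

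The only places requiring any care are the verification that $S_{\mathrm{ac}} \setminus \omega$ remains a support of $\muAC$ and the set-theoretic identity $S_{\mathrm{ac}} \setminus (S_{\mathrm{ac}} \setminus \omega) = \omega$ (valid precisely because $\omega \subset S_{\mathrm{ac}}$); once these are in place, the minimality of $S_{\mathrm{ac}}$ does all the work, and I do not foresee any genuine obstacle. The hypothesis of $\sigma$-finiteness is used only implicitly, to ensure that $\muAC$, $\muS$ and a minimal support $S_{\mathrm{ac}}$ exist in the first place (e.g.\ the set $\{\,\mathrm{d}\muAC/\mathrm{d}\nu > 0\,\}$, constructed piecewise on a $\sigma$-finite exhaustion, serves as such a minimal support).
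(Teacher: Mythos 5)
Your proof is correct and follows essentially the same route as the paper's: part (i) is the same two-line computation using $\muAC \ll \nu$ and additivity, and part (ii) is the same argument that $S_{\mathrm{ac}} \setminus \omega$ remains a support of $\muAC$ contained in $S_{\mathrm{ac}}$, so minimality forces $\nu(\omega) = \nu\big(S_{\mathrm{ac}} \setminus (S_{\mathrm{ac}} \setminus \omega)\big) = 0$. You simply spell out the two verifications (that $S_{\mathrm{ac}}\setminus\omega$ is a support, and the set-theoretic identity) that the paper leaves implicit.
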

\begin{proof}
Let $\omega \in \frakM$.

\fbox{(i)} If $\muS(\omega) = 0$, then $\mu(\omega) = \muAC(\omega) + \muS(\omega) = \muAC(\omega)$, but when also $\nu(\omega) = 0$, then $\muAC(\omega) = 0$, so $\mu(\omega) = 0$.

\fbox{(b)} If $\omega \subset S_{\mathrm{ac}}$ and $\mu(\omega) = 0$, then also $\muAC(\omega) = 0$ (because $0 \leq \muAC \leq \muAC + \muS = \mu$), hence $S_{\mathrm{ac}} \setminus \omega$ is also a support of $\muAC$, but $S_{\mathrm{ac}} \setminus \omega \subset S_{\mathrm{ac}}$, so $0 = \nu \big( S_{\mathrm{ac}} \setminus (S_{\mathrm{ac}} \setminus \omega) \big) = \nu(\omega)$.
\end{proof}

We are ready now to formulate and prove the man result of this section for "our" self-adjoint finitely cyclic operator $A$.

\begin{theorem} \label{thm:C:7}
Let $G \in \Bor(\RR)$. Then:
\begin{enumerate}[(1)]
\item $(A)_G \sim T_{\xx_{G,M}}$.

\item If
\begin{equation}
	\label{eq:C:2}
	\muS(G) = 0,
\end{equation}
then $A$ is absolutely continuous in $G$.

\item If \eqref{eq:C:2} holds, and moreover $G \subset S$ for some $S \in \Bor(\RR)$ being a minimal support of $\muAC$ with respect to $|\cdot|$, then
\begin{equation}
	\label{eq:C:3}
	\overline{G}^{\mathrm{Leb}} \subset \sigmaAC(A).
\end{equation}
\end{enumerate}
\end{theorem}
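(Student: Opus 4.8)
The plan is to reduce everything to the already-developed theory of multiplication operators on $L^2(M)$-spaces and the abstract measure-theoretic Lemma~\ref{lem:C:6}. For part (1), recall from Theorem~\ref{thm:IV:xMUE} that $A \sim T_\xx$ in $L^2(M)$ via the CST $U = U_{A,\vec\varphi}$; in particular $(A)_G$ is unitarily equivalent to the part $(T_\xx)_G$ of $T_\xx$ "in $G$". Now apply Theorem~\ref{thm:B:5}(iii) with $F = \xx$ (so $F^{-1}(G) = G$ since $\xx$ is the identity, hence $\Omega' = G$, $M' = M_G$ and $F' = \restr{\xx}{G}$): it gives $(T_\xx)_G \sim T_{\xx_{G,M}}$, the multiplication by $\restr{\xx}{G}$ on $L^2(M_G)$. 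Composing the two unitary equivalences yields $(A)_G \sim T_{\xx_{G,M}}$, which is (1).

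For part (2), I would use the characterization of absolute continuity in terms of spectral measures. Fix $x \in \calH_G(A) = \Ran E_A(G)$; we must show $E_{A,x}$ is absolutely continuous w.r.t.\ Lebesgue measure. Transport $x$ to $L^2(M)$ via $U^{-1}$; since $A = U T_\xx U^{-1}$, the spectral measure satisfies $E_{A,x}(\omega) = \|E_A(\omega)x\|^2 = \|E_{T_\xx}(\omega)U^{-1}x\|^2$, and by Theorem~\ref{thm:III:28}(10) $E_{T_\xx}(\omega) = T_{\chi_\omega}$. Writing $U^{-1}x = [f]$ with $f \in \calL^2(M)$, formula~\eqref{eq:III:5.19} (with $g=f$, $F=\xx$, so $F^{-1}(\omega) = \omega$) gives $E_{A,x}(\omega) = \int_\omega \gamma_f \ud \tr_M = \mu_f(\omega)$ where $\mu_f = \gamma_f \ud \mu$ with $\mu = \tr_M$ and $\gamma_f \in \calL^1(\mu)$, $\gamma_f \geq 0$. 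Since $x \in \Ran E_A(G)$, i.e.\ $[f] \in L^2_{\Omega'}(M)$ with $\Omega' = G$ by Theorem~\ref{thm:B:5}(i), we may take $f$ supported in $G$ (Fact~\ref{fact:A:2}(ii)), so $\mu_f$ is concentrated on $G$. Thus $E_{A,x} \ll \mu\!\restriction_G$, and it suffices to check $\mu\!\restriction_G \ll |\cdot|$ under hypothesis~\eqref{eq:C:2}. But that is exactly Lemma~\ref{lem:C:6}(i) applied to $\nu = |\cdot|$: if $\omega \subset G$ and $|\omega| = 0$, then $\muS(\omega) \leq \muS(G) = 0$, so $\mu(\omega) = 0$. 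Hence $E_{A,x} \ll |\cdot|$, i.e.\ $x \in H_{\mathrm{ac}}(A)$, proving (2).

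For part (3), by (2) we already have $\calH_G(A) \subset H_{\mathrm{ac}}(A)$, hence $\calH_G(A)$ is an invariant subspace of $(A)_{\mathrm{ac}}$, so $\sigma\big((A)_G\big) \subset \sigma\big((A)_{\mathrm{ac}}\big) = \sigmaAC(A)$. By part~(1), $\sigma\big((A)_G\big) = \sigma(T_{\xx_{G,M}})$, and by Theorem~\ref{thm:III:28}(7) applied to the matrix measure $M_G$ and the function $\restr{\xx}{G}$, this spectrum equals $\VE_{\tr_{M_G}}(\restr{\xx}{G})$. By~\eqref{eq:A:1}, $\tr_{M_G} = \mu\!\restriction_G$, and $\VE_{\mu\restriction_G}(\restr{\xx}{G})$ is, by the definition of essential value set, the complement of the union of all open $V \subset \RR$ with $(\mu\!\restriction_G)(\xx^{-1}(V)) = \mu(G \cap V) = 0$. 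So $t \in \VE_{\mu\restriction_G}(\restr{\xx}{G})$ iff $\mu(G \cap (t-\varepsilon,t+\varepsilon)) > 0$ for every $\varepsilon > 0$. Now I would show $\overline{G}^{\mathrm{Leb}} \subset \VE_{\mu\restriction_G}(\restr{\xx}{G})$: if $t \in \overline{G}^{\mathrm{Leb}}$, then $|G \cap (t-\varepsilon,t+\varepsilon)| > 0$ for all $\varepsilon$; since $G \subset S$ and $S$ is a minimal support of $\muAC$ w.r.t.\ $|\cdot|$, Lemma~\ref{lem:C:6}(ii) (with $\nu = |\cdot|$, $\omega = G \cap (t-\varepsilon,t+\varepsilon) \subset S$) gives $\mu\big(G \cap (t-\varepsilon,t+\varepsilon)\big) > 0$, as $|\cdot|$ of this set is positive would force $\mu$ of it positive via contraposition of (ii). Hence $t$ lies in the essential value set, and chaining the inclusions gives~\eqref{eq:C:3}.

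The main obstacle I anticipate is the careful bookkeeping in part (3) connecting $\overline{G}^{\mathrm{Leb}}$ to the essential-value-set description via the minimal support $S$: one must correctly orient Lemma~\ref{lem:C:6}(ii) (it says $\mu(\omega) = 0 \Rightarrow |\omega| = 0$ for $\omega \subset S$, so its contrapositive $|\omega| > 0 \Rightarrow \mu(\omega) > 0$ is what is used), and one must be sure the sets $G \cap (t-\varepsilon,t+\varepsilon)$ genuinely lie in $S$ — which they do since $G \subset S$ — and are Borel. A secondary subtlety is confirming that when $x \in \Ran E_A(G)$ one may choose the representative $f$ of $U^{-1}x$ to be supported in $G$; this is where Theorem~\ref{thm:B:5}(i) together with Fact~\ref{fact:A:2}(ii) must be invoked cleanly rather than hand-waved.
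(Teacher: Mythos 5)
Your proposal is correct and follows essentially the same route as the paper: part (1) via the $\xx$MUE Theorem combined with Theorem~\ref{thm:B:5}, part (2) by transporting to $L^2(M)$, computing $E_{T_\xx,[f]}$ as $\int_{\omega\cap G}\gamma_f\ud\mu$ for a representative supported in $G$ and invoking $\muS(G)=0$, and part (3) by the chain $\overline{G}^{\mathrm{Leb}}\subset\VE_{\mu_G}(\restr{\xx}{G})=\sigma(T_{\xx_{G,M}})=\sigma((A)_G)\subset\sigmaAC(A)$ via Lemma~\ref{lem:C:6}. The only (harmless) deviation is that you prove just the inclusion $\overline{G}^{\mathrm{Leb}}\subset\sigma(T_{\xx_{G,M}})$ where the paper establishes equality, which is all that \eqref{eq:C:3} requires.
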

\begin{proof}
\fbox{(1)} By "$\xx$MUE" Theorem for fin. cycl. case (see Theorem~\ref{thm:IV:xMUE}) we get
\begin{equation}
	\label{eq:C:4}
	A \sim T_{\xx},
\end{equation}
where $T_{\xx}$ denotes the multiplication by $\xx$ operator in $L^2(M)$. So, we get
\begin{equation}
	\label{eq:C:5}
	(A)_G \sim (T_{\xx})_G,
\end{equation}
and by Theorem~\ref{thm:B:5} we obtain
\begin{equation}
	\label{eq:C:6}
	(A)_G \sim (T_{\xx})_G \sim T_{\xx_{G,M}}.
\end{equation}

\fbox{(2)} By \eqref{eq:C:4}, to get that $A$ is a.c. in $G$, it suffices to prove that $T_\xx$ is a.c. in $G$ that is
\begin{equation}
	\label{eq:C:6'}
	H_G(T_\xx) \subset H_{\mathrm{ac}}(T_\xx).
\end{equation}
So, let $f \in \calL^2(M)$ be such, that $[f] \in H_G(T_\xx) = \Ran E_{T_\xx}(G)$. By Theorem~\ref{thm:III:28} we have
\begin{equation}
	\label{eq:C:7}
	\forall_{\omega \in \Bor(\RR)} \
	E_{T_\xx}(\omega) = T_{\chi_\omega}
\end{equation}
(here $T_{\chi_\omega}$ is the operator in $L^2(M)$ of multiplication by $\chi_\omega$ -- the characteristic function of $\omega$). So, the condition $[f] \in H_G(T_\xx)$ means that
\begin{equation}
	\label{eq:C:8}
	[f] = E_{T_\xx}(G)[f] = [\chi_G f].
\end{equation}
Moreover, by \eqref{eq:C:7} and by \eqref{eq:III:3.1} and \eqref{eq:III:2.3} for any $\omega \in \Bor(\RR)$ and $g \in \calL^2(M)$
\begin{equation}
	\label{eq:C:9}
	\begin{split}
		E_{T_\xx, [g]}(\omega) 
		&=
		\langle [\chi_\omega g], [g] \rangle_M \\
		&=
		\dScalP{\chi_\omega g, g}_M \\
		&=
		\int_\RR \langle D_M(t) \chi_\omega(t) g(t), g(t) \rangle_{\CC^d} \ud \tr_M(t) \\
		&=
		\int_\omega \langle D_M(t) g(t), g(t) \rangle_{\CC^d} \ud \mu(t).
	\end{split}
\end{equation}
Hence, by \eqref{eq:C:8}, for any $\omega \in \Bor(\RR)$
\begin{equation}
	\label{eq:C:10}
	\begin{split}
		E_{T_\xx, [f]}(\omega) 
		&=
		E_{T_{\xx, [\chi_G f]}}(\omega) \\
		&=
		\int_\omega \langle D_M(t) \chi_G(t) f(t), \chi_G(t) f(t) \rangle_{\CC^d} \ud \mu(t) \\
		&=
		\int_{\omega \cap G} \langle D_M(t) f(t), f(t) \rangle_{\CC^d} \ud \mu(t).
	\end{split}
\end{equation}
But we have $\mu = \muAC + \muS$, so by \eqref{eq:C:2} and \eqref{eq:C:10} for any $\omega \in \Bor(\RR)$
\begin{equation}
	\label{eq:C:11}
	E_{T_\xx, [f]}(\omega) = 
	\int_{\omega \cap G} \langle D_M(t) f(t), f(t) \rangle_{\CC^d} \ud \muAC(t).
\end{equation}
But, "by definition" $\muAC$ is a.c. with respect to $|\cdot|$, so when $|\omega| = 0$, then $|\omega \cap G| = 0$ by \eqref{eq:C:11}. Thus proves that $E_{T_{\xx, [f]}}$ is a.c. with respect to $|\cdot|$, so $[f] \in H_{\mathrm{ac}}(T_\xx)$, and \eqref{eq:C:6'} holds.

\fbox{(3)} From part (2) by \eqref{eq:C:2}, we know that \eqref{eq:C:6'} holds. Hence, by spectral theory for the parts of s.a. operators in its invariant subspaces,
we have
\begin{equation}
	\label{eq:C:12}
	\sigma((T_\xx)_G) = 
	\sigma(\restr{T_\xx}{H_G(T_\xx)}) \subset 
	\sigma(\restr{T_\xx}{H_\mathrm{ac}(T_\xx)}) = 
	\sigmaAC(T_\xx).
\end{equation}
Moreover, from part (1) we know that \eqref{eq:C:6} and \eqref{eq:C:4}  hold, so by \eqref{eq:C:12} 
\[
	\sigma(T_{\xx_{G, M}}) \subset \sigmaAC(A).
\]
To finish the proof we should only show, that
\begin{equation}
	\label{eq:C:13}
	\sigma(T_{\xx_{G, M}}) = 
	\overline{G}^{\mathrm{Leb}}.
\end{equation}
But using the definition of the restriction $M_G$ of the matrix measure (see Section~\ref{sec:A}, Theorem~\ref{thm:III:28}7)) and the definition of the essential value set (see Section~\ref{sec:III:3}) we get\footnote{It suffices to observe the trivial fact, that
\[
	\tr_{M_G}(\omega) =
	\tr \big( M_G(\omega) \big) = 
	\tr \big( M(\omega) \big) =
	\tr_M(\omega) = 
	\mu(\omega) = \mu_G(\omega)
\]
for any $\omega \in \Bor(\RR)$ such that $\omega \subset G$.}
\begin{equation}
	\label{eq:C:14}
	\sigma(T_{\xx_{G, M}}) = 
	\VE_{\mu_G}(\restr{\xx}{G}),
\end{equation}
where $\mu_G$ is the restriction of $\mu$ to the Borel subsets of $G$ (being the $\sigma$-algebra of subsets of $G$), and
\begin{align*}
	\VE_{\mu_G}(\restr{\xx}{G}) 
	&=
	\big\{ t \in \RR : \forall_{\varepsilon > 0} \ \mu_G \big( (t-\varepsilon; t+\varepsilon) \cap G \big) > 0 	
	\big\} \\
	&=
	\big\{ t \in \RR : \forall_{\varepsilon > 0} \ \mu \big( (t-\varepsilon; t+\varepsilon) \cap G \big) > 0 	
	\big\}.
\end{align*}
By \eqref{eq:C:2} and $G \subset S$, for any $\omega \in \Bor(\RR)$ of the form $\omega = (t-\varepsilon; t+\varepsilon) \cap G$, with $t \in \RR$, $\varepsilon > 0$, we have
\[
	\muS(\omega) = 0 \quad \text{and} \quad 
	\omega \subset S.
\]
Hence, for $\omega$ as above we can use Lemma~\ref{lem:C:6} for $\nu = |\cdot|$ and $S_{\mathrm{ac}} = S$ and we get $|\omega| > 0 \Leftrightarrow \mu(\omega) > 0$. Thus proves that (see \eqref{eq:C:1}) $\VE_{\mu_G} (\restr{\xx}{G}) = \overline{G}^{\mathrm{Leb}}$, and by \eqref{eq:C:14} we get \eqref{eq:C:13}.
\end{proof}

\begin{fact}[Properties of $\overline{G}^{\mathrm{Leb}}$] \label{fact:C:8}
\begin{enumerate}[(i)]
\item For any $G \in \Bor(\RR)$
\begin{equation}
	\label{eq:C:15}
	\overline{G}^{\mathrm{Leb}} \subset
	\overline{G}.
\end{equation}

\item If $G \subset \RR$ is a sum of any family of intervals of non-zero length (of any kind, including infinite ones)\footnote{i.e. sum of any family of connected non-singletons.}, then $\overline{G}^{\mathrm{Leb}} = \overline{G}$,

\item If $G$ is open, then $\overline{G}^{\mathrm{Leb}} = \overline{G}$.

\item If $|G| = 0$, then $\overline{G}^{\mathrm{Leb}} = \emptyset$.
\end{enumerate}
\end{fact}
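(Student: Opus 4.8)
The plan is to read everything off the definition \eqref{eq:C:1}, first establishing the inclusion (i), then upgrading it to equality under the hypotheses of (ii) (whence (iii) follows at once), and finally disposing of the measure-zero case (iv) by a one-line estimate. I would begin with (i) by contraposition: if $t\notin\overline{G}$ there is an $\varepsilon>0$ with $(t-\varepsilon;t+\varepsilon)\cap G=\emptyset$, so $|G\cap(t-\varepsilon;t+\varepsilon)|=0$ and hence $t\notin\overline{G}^{\mathrm{Leb}}$; this is exactly \eqref{eq:C:15}.

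For (ii), by (i) it remains only to check $\overline{G}\subset\overline{G}^{\mathrm{Leb}}$. I would fix $t\in\overline{G}$ and an arbitrary $\varepsilon>0$, pick a point $s\in G\cap(t-\varepsilon;t+\varepsilon)$, and let $I\subset G$ be a non-degenerate interval containing $s$ (such an $I$ exists by the hypothesis that $G$ is a union of non-degenerate intervals). The key observation is that $I\cap(t-\varepsilon;t+\varepsilon)$ is itself a non-degenerate interval: it is an interval containing $s$, and it cannot collapse to $\{s\}$, because $(t-\varepsilon;t+\varepsilon)$ is open, hence contains a whole neighbourhood of $s$, on which the non-degenerate interval $I$ through $s$ necessarily has points other than $s$. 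Consequently $|G\cap(t-\varepsilon;t+\varepsilon)|\ge|I\cap(t-\varepsilon;t+\varepsilon)|>0$, and since $\varepsilon$ was arbitrary, $t\in\overline{G}^{\mathrm{Leb}}$.

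Item (iii) I would then deduce from (ii): a non-empty open subset of $\RR$ is a (countable) union of its connected components, each an open interval of non-zero length, so (ii) applies; and the empty set is trivial. For (iv), if $|G|=0$ then for every $t\in\RR$ and every $\varepsilon>0$ we have $|G\cap(t-\varepsilon;t+\varepsilon)|\le|G|=0$, so the defining condition of $\overline{G}^{\mathrm{Leb}}$ fails already at $\varepsilon=1$, whence $\overline{G}^{\mathrm{Leb}}=\emptyset$.

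As for where the real content lies: none of these steps is genuinely hard, and the only point demanding a moment's care is the claim in (ii) that intersecting the ambient non-degenerate interval with the open ball $(t-\varepsilon;t+\varepsilon)$ around $s$ preserves non-degeneracy — this is precisely the place where the ``sum of non-degenerate intervals'' (equivalently, ``open'') hypothesis is used, since for a countable or merely null set $G$ one has $\overline{G}^{\mathrm{Leb}}=\emptyset$ by (iv) while $\overline{G}$ can be large, so (ii)--(iii) would fail without it.
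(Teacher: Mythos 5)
Your proof is correct and follows essentially the same route as the paper's: contraposition for (i), the ``non-degenerate interval through a point of $G\cap(t-\varepsilon;t+\varepsilon)$'' argument for (ii), and the immediate deductions for (iii) and (iv). The only difference is that you spell out more carefully why $I\cap(t-\varepsilon;t+\varepsilon)$ stays non-degenerate, a point the paper leaves implicit.
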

\begin{proof}
\fbox{(i)} Obviously $\RR \setminus \overline{G} \subset \RR \setminus \overline{G}^{\mathrm{Leb}}$.

\fbox{(ii)} By (i) it suffices to prove $\overline{G} \subset \overline{G}^{\mathrm{Leb}}$. But for $t \in \overline{G}$ and any $\varepsilon > 0$ $G \cap (t-\varepsilon; t + \varepsilon)$ contains a non-empty intersection of $I \cap (t-\varepsilon; t + \varepsilon)$, where $I$ is an interval with $|I| > 0$. Hence $I \cap (t-\varepsilon; t + \varepsilon)$ is also an interval and also $|I \cap (t-\varepsilon; t + \varepsilon)| > 0$, thus also $|G \cap (t-\varepsilon; t + \varepsilon)| > 0$, and $t \in \overline{G}^{\mathrm{Leb}}$.

\fbox{(iii)} follows from (ii) for open intervals, and (iv) is obvious.
\end{proof}

\begin{remark} \label{rem:C:9}
Using Fact~\ref{fact:C:8} we can obtain the stronger assertion
\begin{equation}
	\label{eq:C:16}
	\overline{G} \subset \sigmaAC(A),
\end{equation}
adding the extra condition on $G$ as in (ii) and (iii) of this Fact to the assumptions of Theorem~\ref{thm:C:7}.
\end{remark}

\end{appendix}

\newpage

\begin{bibliography}{literature.bib, block.bib, jacobi.bib}
	\bibliographystyle{amsplain}

\providecommand{\bysame}{\leavevmode\hbox to3em{\hrulefill}\thinspace}
\providecommand{\MR}{\relax\ifhmode\unskip\space\fi MR }
% \MRhref is called by the amsart/book/proc definition of \MR.
\providecommand{\MRhref}[2]{%
  \href{http://www.ams.org/mathscinet-getitem?mr=#1}{#2}
}
\providecommand{\href}[2]{#2}
\begin{thebibliography}{1}

\bibitem{AchizerGlazmanII}
N.I. Akhiezer and I.M. Glazman, \emph{Theory of linear operators in {H}ilbert
  space. {V}ol. {II}}, Monographs and Studies in Mathematics, vol.~10, Pitman
  (Advanced Publishing Program), Boston, Mass.-London, 1981, Translated from
  the third Russian edition by E. R. Dawson, Translation edited by W. N.
  Everitt.

\bibitem{Diestel1977}
J.~Diestel and J.~J. Uhl, Jr., \emph{Vector measures}, Mathematical Surveys,
  No. 15, American Mathematical Society, Providence, R.I., 1977.

\bibitem{DunSchwII}
Nelson Dunford and Jacob~T. Schwartz, \emph{Linear operators. {P}art {II}},
  Wiley Classics Library, John Wiley \& Sons, Inc., New York, 1988, Spectral
  theory. Selfadjoint operators in Hilbert space, With the assistance of
  William G. Bade and Robert G. Bartle, Reprint of the 1963 original, A
  Wiley-Interscience Publication.

\bibitem{Reed1980}
M.~Reed and B.~Simon, \emph{Methods of modern mathematical physics. {I}},
  second ed., Academic Press, Inc. [Harcourt Brace Jovanovich, Publishers], New
  York, 1980, Functional analysis.

\bibitem{Rudin1991}
W.~Rudin, \emph{Functional analysis}, International Series in Pure and Applied
  Mathematics, McGraw-Hill, Inc., New York, 1991.

\bibitem{Schmudgen2012}
K.~Schm\"{u}dgen, \emph{Unbounded {S}elf-adjoint {O}perators on {H}ilbert
  {S}pace}, Graduate Texts in Mathematics, vol. 265, Springer Netherlands,
  2012.

\bibitem{Teshl2000}
G.~Teshl, \emph{Jacobi operators and completely integrable nonlinear lattices},
  Mathematical Surveys and Monographs, vol.~72, AMS, 2000.

\end{thebibliography}
\end{bibliography}

\end{document}